\DeclareMathOperator{\id}{id}
\DeclareMathOperator{\Hom}{Hom}
\DeclareMathOperator{\Mor}{Mor}
\newcommand{\OFG}{\mathcal{O}_\mathfrak{F}G}
\newcommand{\uR}{\underline{R}}
\newcommand{\uZZ}{\ensuremath{\underline{\mathbb{Z}}}}
\newcommand{\uE}{\underline{\mathrm{E}}}
\newcommand{\B}{\mathrm{B}} 
\DeclareMathOperator{\ucd}{\underline{cd}}
\DeclareMathOperator{\uFP}{\underline{FP}}
\DeclareMathOperator{\ugd}{\underline{gd}}
\DeclareMathOperator{\vcd}{vcd}
\DeclareMathOperator{\cd}{cd}
\DeclareMathOperator{\FP}{FP}
\DeclareMathOperator{\Tor}{Tor}
\newcommand*{\RR}{\ensuremath{\mathbb{R}}}
\newcommand*{\FF}{\ensuremath{\mathbb{F}}}
\newcommand*{\ZZ}{\ensuremath{\mathbb{Z}}}
\newcommand*{\QQ}{\ensuremath{\mathbb{Q}}}
\newcommand*{\DD}{\ensuremath{\mathbb{D}}}
\DeclareMathOperator{\Ind}{Ind}
\DeclareMathOperator{\Res}{Res}
\newcommand*{\longhookrightarrow}{\ensuremath{\lhook\joinrel\relbar\joinrel\rightarrow}}
\newcommand*{\longtwoheadrightarrow}{\ensuremath{\relbar\joinrel\twoheadrightarrow}}
\DeclareMathOperator{\Fcd}{\mathfrak{F}cd}
\DeclareMathOperator{\Gcd}{Gcd}
\newtheorem{Lemma}{Lemma}[section]
\newtheorem{Theorem}[Lemma]{Theorem}
\newtheorem{Cor}[Lemma]{Corollary}
\newtheorem{Prop}[Lemma]{Proposition}
\theoremstyle{definition}
\newtheorem{Example}[Lemma]{Example}
\newtheorem{Question}[Lemma]{Question}
\newtheorem*{Acknowledgements}{Acknowledgements}
\theoremstyle{remark}
\newtheorem{Remark}[Lemma]{Remark}
\newcommand\mf\mathfrak
\title{Bredon--Poincar\'e Duality Groups}
\author{Simon St. John-Green}
\email{Simon.StJG@gmail.com}
\address{Department of Mathematics, University of Southampton, SO17 1BJ, UK}
\date{\today}
\keywords{Poincar\'e duality, Bredon--Poincar\'e duality, Bredon cohomology}
\subjclass{20J05, 57P10, 57M07}
\begin{document}

\numberwithin{equation}{section}

{\abstract{If $G$ is a group which admits a manifold model for $\B G$ then $G$ is a Poincar\'e duality group.  We study a generalisation of Poincar\'e duality groups, introduced initially by Davis and Leary in \cite{DavisLeary-DiscreteGroupActionsOnAsphericalManifolds}, motivated by groups $G$ with cocompact manifold models $M$ for $\uE G$ where $M^H$ is a contractible submanifold for all finite subgroups $H$ of $G$.  We give several sources of examples and constructions of these Bredon--Poincar\'e duality groups, including using the equivariant reflection group trick of Davis and Leary to construct examples of Bredon--Poincar\'e duality groups arising from actions on manifolds $M$ where the dimensions of the submanifolds $M^H$ are specified.  We classify Bredon--Poincar\'e duality groups in low dimensions, and discuss behaviour under group extensions and graphs of groups.}}
\maketitle

\section{Introduction}

A \emph{duality group} is a group $G$ of type $\FP$ for which 
\[H^i(G, \ZZ G) \cong \left\{ \begin{array}{l l} \text{$\ZZ$-flat} & \text{if $i = n$} \\ 0 & \text{else.} \end{array}\right.\]
Where $n$ is necessarily the cohomological dimension of $G$.  The name duality comes from the fact that this condition is equivalent to existence of a $\ZZ G$ module $D$, giving an isomorphism
\begin{equation*} H^i(G, M) \cong H_{n-i}(G, D \otimes_\ZZ M) \tag{$\ast$} \end{equation*}
for all $i$ and all $\ZZ G$-modules $M$.  It can be proven that given such an isomorphism, the module $D$ is necessarily $H^n(G, \ZZ G)$.  A duality group $G$ is called a \emph{Poincar\'e duality group} if in addition 
\[H^i(G, \ZZ G) \cong \left\{ \begin{array}{l l} \ZZ & \text{if $i = n$} \\ 0 & \text{else.} \end{array}\right.\]
These groups were first defined by Bieri \cite{Bieri-GruppenMitPoincareDualitat}, and independently by Johnson--Wall \cite{JohnsonWall-OnGroupsSatisfyingPoincareDuality}.  Duality groups were first studied by Bieri and Eckmann in \cite{BieriEckmann-HomologicalDualityGeneralizingPoincareDuality}.  See \cite{Davis-PoincareDualityGroups} and \cite[III]{Bieri-HomDimOfDiscreteGroups} for an introduction to these groups.  

If a group $G$ has a manifold model for $\B G$ then $G$ is a Poincar\'e duality group.  Wall asked if the converse is true \cite{Wall-HomologicalGroupTheory}---the answer is no as Poincar\'e duality groups can be built which are not finitely presented \cite[Theorem C]{Davis-CohomologyCoxeterGroupRingCoeff}---but the question remains a significant open problem if we include the requirement that $G$ be finitely presented.  The conjecture is known to hold only in dimension $2$ \cite{Eckmann-PoincareDualityGroupsOfDimensionTwoAreSurfaceGroups}.

Let $R$ be a commutative ring.  A group $G$ is \emph{duality over $R$} if $G$ is $\FP$ over $R$ and
\[H^i(G, R G) \cong \left\{ \begin{array}{l l} \text{$R$-flat} & \text{if $i = n$} \\ 0 & \text{else.} \end{array}\right.\]
$G$ is \emph{Poincar\'e duality over $R$} if
\[ H^i(G, R G) \cong \left\{ \begin{array}{l l} R & \text{if $i = n$} \\ 0 & \text{else.} \end{array}\right.\]
An analog of Wall's conjecture is whether every torsion-free finitely presented Poincar\'e duality group over $R$ is the fundamental group of an aspherical closed $R$-homology manifold \cite[Question 3.5]{Davis-PoincareDualityGroups}.  This is answered in the negative by Fowler for $R = \QQ$ \cite{Fowler-FinitenessForRationalPoincareDuality}, but remains open for $R = \ZZ$.

We study a generalisation of Poincar\'e duality groups, looking at the algebraic analog of the condition that $G$ admit a manifold model $M$ for $\uE G$ such that for any finite subgroup $H$ the fixed point set $M^H$ is a submanifold.  Here $\uE G$ refers to the classifying space for proper actions of $G$, this is a $G$-CW complex $X$ such that for all subgroups $H$ of $G$, $X^H$ is contractible if $H$ is finite and empty otherwise.  Such spaces are unique up to homotopy equivalence and we denote the minimal dimension of a classifying space for proper actions by $\ugd G$.  The cohomology theory most suited to the study of proper actions is Bredon cohomology.  For instance, writing $\ucd G$ for the Bredon cohomological dimension, L\"uck and Meintrup have shown that $\ugd G = \ucd G$ except for the possibility that $\ucd G = 2$ and $\ugd G = 3$ which Brady, Leary and Nucinkis have shown can occur \cite{LuckMeintrup-UniversalSpaceGrpActionsCompactIsotropy}\cite{BradyLearyNucinkis-AlgAndGeoDimGroupsWithTorsion}.  The Bredon cohomology analog of the $\FP$ condition will be denoted $\uFP$, and that of $\cd G$ will be denoted $\ucd G$.

If $G$ admits a cocompact manifold model $M$ for $\uE G$ then $G$ is $\uFP$.  Also if for any finite subgroup $H$ the fixed point set $M^H$ is a submanifold, we have the following condition on the cohomology of the Weyl groups $WH = N_GH/H$:
\[ H^i(WH, \ZZ [WH]) =  \left\{ \begin{array}{c c} \ZZ & \text{ if } i = \dim M^H  \\ 0 & \text{ else. } \end{array} \right.  \]
See \cite[p.3]{DavisLeary-DiscreteGroupActionsOnAsphericalManifolds} for a proof of the above.  
Building on this, in \cite{DavisLeary-DiscreteGroupActionsOnAsphericalManifolds} and also in \cite[Definition 5.1]{MartinezPerez-EulerClassesAndBredonForRestrictedFamilies} a \emph{Bredon duality group over $R$} is defined as a group $G$ of type $\uFP$ such that for every finite subgroup $H$ of $G$ there is an integer $n_H$ with 
\[ H^i(WH, R [WH]) = \left\{ \begin{array}{l l} \text{$R$-flat} & \text{if $i = n_H$} \\ 0 & \text{else.} \end{array} \right. \]
Furthermore, $G$ is said to be Bredon--Poincar\'e duality over $R$ if for all finite subgroups $H$, 
\[H^{n_H}(WH, R [WH]) = R\]
We say that a Bredon duality group $G$ is \emph{dimension $n$} if $\ucd G = n$.  Note that for torsion-free groups these reduce to the usual definitions of duality and Poincar\'e duality groups.  

One might generalise Wall's conjecture: Let $G$ be Bredon--Poincar\'e duality over $\ZZ$, such that $WH$ is finitely presented for all finite subgroups $H$.  Does $G$ admit a cocompact manifold model $M$ for $\uE G$, where for each finite subgroup $H$ the fixed point set $M^H$ is a submanifold?  This is false by an example of Jonathon Block and Schmuel Weinberger, suggested to us by Jim Davis.

\theoremstyle{plain}\newtheorem*{CustomThmBPDNoMfd}{Theorem \ref{theorem:BPD groups with no manifold model}}
\begin{CustomThmBPDNoMfd}
  There exist examples of Bredon--Poincar\'e duality groups over $\ZZ$, such that $WH$ is finitely presented for all finite subgroups $H$ but $G$ doesn't admit a cocompact manifold model $M$ for $\uE G$.
\end{CustomThmBPDNoMfd}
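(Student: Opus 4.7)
My plan is to construct an explicit example as suggested by Davis (attributed to Block and Weinberger) rather than pursue a general existence argument. The starting ingredient will be a torsion-free finitely presented Poincar\'e duality group $\Gamma$ over $\ZZ$ realised as $\pi_1$ of a closed aspherical $n$-manifold $N$, chosen so that Block--Weinberger machinery produces a specific equivariant obstruction: concretely, one selects $N$ (for example using hyperbolisation together with non-simply-connected homology spheres as boundary pieces) so that no finite group of the appropriate type can act on a contractible manifold $X$ covering an aspherical manifold $N'$ homotopy equivalent to $N$ with prescribed fixed-point data.

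Next I would build $G$ by mixing $\Gamma$ with a finite group $F$, for instance as a semidirect product $G = \Gamma \rtimes F$ or via the equivariant reflection group trick of Davis and Leary discussed later in the paper. The finite subgroups of $G$ can be read off from the construction, and their normalisers---hence the Weyl groups $WH$---described explicitly: typically each $WH$ is a finite extension of a subgroup of $\Gamma$ which itself is a Poincar\'e duality group in some lower dimension. Each $WH$ is finitely presented because it is assembled from finitely presented pieces, and the Bredon--Poincar\'e duality condition $H^i(WH, \ZZ[WH]) \cong \ZZ$ concentrated in a single degree $n_H$ follows by combining the ordinary Poincar\'e duality of the $\Gamma$-piece with the standard fact that a finite extension of a Poincar\'e duality group is again Poincar\'e duality of the same dimension. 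Type $\uFP$ for $G$ comes from the cocompact action on $\uE\Gamma$ extended equivariantly.

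The main obstacle, and indeed the entire point of the example, is ruling out the existence of a cocompact manifold model $M$ for $\uE G$ in which every $M^H$ is a submanifold. The plan is to argue that such an $M$ would let one reconstruct an equivariant manifold realising precisely the finite group action that the Block--Weinberger set-up forbids: the fixed-point submanifolds $M^H$, together with the dimensions $n_H$ forced by the Bredon--Poincar\'e duality data, would yield a proper cocompact action of $F$ on a contractible manifold whose quotient homotopy-models $N$ together with compatible fixed-set structure, contradicting the choice of $N$. Turning the equivariant obstruction into a clean contradiction inside the Bredon framework---in particular verifying that the fixed-point dimensions $n_H$ coming purely from the algebra really do force the geometric submanifolds to have the structure ruled out by Block and Weinberger---will be the most delicate step, since one must rule out the possibility that an entirely different manifold model, unrelated to the starting $N$, could appear.
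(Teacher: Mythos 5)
There is a genuine gap. You correctly identify that the example should come from Block--Weinberger, but you miss the key point: their Theorems 1.5 and 1.8 (quoted as Theorem \ref{theorem:block--weinberger} in the paper) already produce a short exact sequence $1 \to H \to G \to Q \to 1$ with $Q$ finite, all torsion of $G$ contained in $H$, a cocompact manifold model for $\uE H$, $\ugd G < \infty$, and---crucially---the statement that \emph{no manifold model for $\uE G$ exists at all}. The non-existence of a (cocompact) manifold model is an \emph{input} supplied by Block--Weinberger, not something to be rederived. Your plan to reconstruct the obstruction by extracting an equivariant finite-group action from a hypothetical $M$ and then contradicting the Block--Weinberger setup is precisely the ``most delicate step'' you yourself flag as unresolved (ruling out unrelated manifold models), and it is unnecessary: the paper's proof simply takes Block--Weinberger's $G$ and verifies that it is Bredon--Poincar\'e duality over $\ZZ$ with finitely presented Weyl groups. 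That verification is short: $G$ has finitely many conjugacy classes of finite subgroups because $H$ does and all torsion lies in $H$; for any finite $K \le G$ one has $K \le H$ and $N_H K$ of finite index in $N_G K$, so $N_G K$ (hence $W_G K$) inherits $\FP_\infty$ and finite presentation from $N_H K$, which has these properties because $H$ has a cocompact model for $\uE H$; then Lemma \ref{lemma:OFFPn characterisation} gives $\uFP$, and the chain
\[
H^i(W_G K, R[W_G K]) \cong H^i(N_G K, R[N_G K]) \cong H^i(N_H K, R[N_H K]) \cong H^i(W_H K, R[W_H K])
\]
(Shapiro for finite index, and \cite[Proposition 2.7]{Bieri-HomDimOfDiscreteGroups} for the finite kernel) transports the Poincar\'e duality condition from $H$ to $G$.

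Two further misdirections in your outline. First, you propose taking $\Gamma$ torsion-free and forming $\Gamma \rtimes F$; in the Block--Weinberger construction the normal subgroup $H$ is not torsion-free---it is exactly where the torsion of $G$ lives, and $H$ is assumed to admit a cocompact manifold model for $\uE H$ (not just $\mathrm{B}H$). Second, you suggest using the Davis--Leary equivariant reflection group trick as an alternative way to build $G$; but the groups produced by that trick come equipped with CAT(0) cubical manifold models for $\uE G$ (this is Lemma \ref{lemma:arbVG construction is uEG} and the point of Section \ref{section:reflection groups}), so they can never serve as counterexamples to the generalised $\mathrm{PD}^n$ question.
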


If $G$ is Bredon--Poincar\'e duality and virtually torsion-free then $G$ is virtually Poincar\'e duality.  Thus an obvious question is whether all virtually Poincar\'e duality groups are Bredon--Poincar\'e duality, in \cite{DavisLeary-DiscreteGroupActionsOnAsphericalManifolds} it is shown that this is not the case for $R = \ZZ$.  An example is also given in \cite[\S 6]{MartinezPerez-EulerClassesAndBredonForRestrictedFamilies} which fails for both $R = \ZZ$ and for $R = \FF_p$, the finite field of $p$ elements.  One might also ask if every Bredon--Poincar\'e duality group is virtually torsion-free but this is also not the case, see for instance Examples \ref{example:FarbWeinberger} and \ref{example:disc sub of lie not vtf}. 

In \cite[Theorems D,E]{Hamilton-WhenIsGroupCohomologyFinitary} Hamilton shows that, over a field $F$ of characteristic $p$, given an extension $\Gamma$ of a torsion-free group of type $\FP_\infty$ by a finite $p$-group, the resulting group will be of type $\uFP_\infty$ (by examples of Leary and Nucinkis, an extension by an arbitrary finite group may not even be $\uFP_0$ \cite{LearyNucinkis-SomeGroupsOfTypeVF}).  Martinez-Perez builds on this result to show that if $G$ is assumed Poincar\'e duality then $\Gamma$ is Bredon--Poincar\'e duality over $F$ with $\ucd_F \Gamma  =\cd_F G$.  However her results do not extend to Bredon duality groups.

Given a Bredon duality group $G$ we write $\mathcal{V}(G)$ for the set
\[\mathcal{V}(G) = \{ n_F : F \text{ a non-trivial finite subgroup of }G\} \subseteq \{0, \ldots, n \}\]
In Example \ref{example:duality arbitrary V} we will build Bredon duality groups with arbitrary $\mathcal{V}(G)$.  If $G$ has a manifold model, or homology manifold model, for $\uE G$ then there are some restrictions on $\mathcal{V}(G)$---see Section \ref{subsection:actions on homology manifolds} for this as well as for the definition of homology manifold.  In Section \ref{section:reflection groups} we build Bredon--Poincar\'e duality groups with a large class of vectors $\mathcal{V}(G)$, however the following question remains open:

\begin{Question}\label{question:duality prescribed VG}
 Is it possible to construct Bredon--Poincar\'e duality groups with prescribed $\mathcal{V}(G)$?
\end{Question}

One can show that for a Bredon--Poincar\'e duality group, $n_{1} \le \ucd G$ (recall $n_1$ is the integer for which $H^n(G, RG) \cong R$) and also, if we are working over $\ZZ$, then $n_{1} = \cd_\QQ G$ (Lemma \ref{Lemma:observations}).  Thus the following question is of interest:

\begin{Question}\label{question:duality n=nid}
 Do there exist Bredon duality groups with $\ucd G \neq n_{1}$?
\end{Question}

Examples of groups for which $\cd_\QQ G \neq \ucd_\ZZ G$ are known \cite{LearyNucinkis-SomeGroupsOfTypeVF}, but there are no known examples of type $\uFP_\infty$.  This question is also related to \cite[Question 5.8]{MartinezPerez-EulerClassesAndBredonForRestrictedFamilies} where it is asked whether a virtually torsion-free Bredon duality group satisfies $\ucd G = \vcd G$.

One might hope to give a definition of Bredon--Poincar\'e duality groups in terms of Bredon cohomology only, we do not know if this is possible but we show in Section \ref{section:wrong notion of duality} that the na\"ive idea of asking that a group be $\uFP$ with 
\[ H^i_{\mathfrak{F}}(G, R[?,-]) \cong \left\{ \begin{array}{l l} \uR & \text{if $i = n$} \\ 0 & \text{else.} \end{array} \right.\]
is not the correct definition, where in the above $H^i_{\mathfrak{F}}$ denotes the Bredon cohomology and $\uR$ is the constant covariant Bredon module (these will be defined in more detail in Section \ref{section:finiteness conditions in Bredon}).  Namely we show in Theorem \ref{theorem:bredon wrong duality} that any such group is necessarily a torsion-free Poincar\'e duality group over $R$.

\begin{Acknowledgements}
 The author would like to thank his supervisor Brita Nucinkis for suggesting the topic and for many enlightening mathematical conversations.  Example \ref{example:DicksLeary example PD over R not Z} and the examples in Section \ref{section:reflection groups} are due to Ian Leary and the author would like to thank him for suggesting them and for other helpful discussions.  The author would like to thank Jim Davis for showing us that some groups constructed by Block and Weinberger, appearing now in Section \ref{section:counterexample to generalised PDn}, answered a question in an earlier version of this article.
\end{Acknowledgements}

\section{A Review of Finiteness Conditions in Bredon Cohomology}\label{section:finiteness conditions in Bredon}
This section contains a review of Bredon cohomology and finiteness conditions in Bredon cohomology.

Fix a group $G$ and commutative ring $R$, and let $\mathfrak{F}$ denote the family of all finite subgroups of $G$.  The \emph{orbit category}, denoted $\OFG$, is the small category whose objects are the transitive $G$-sets $G/H$ for $H \in \mf{F}$ and whose arrows are all $G$-maps between them.  Any $G$-map $G/H \to G/K$ is determined entirely by the image of the coset $H$ in $G/K$, and $H \mapsto xK$ determines a $G$-map if and only if $ x^{-1}Hx \le K$.

A \emph{Bredon module} is a contravariant functor from $\OFG$ to the category of left $R$-modules.  As such the category of Bredon modules is Abelian and exactness is defined pointwise---a short exact sequence 
\[ M^\prime \longrightarrow M \longrightarrow M^{\prime\prime} \]
is exact if and only if 
\[ M^\prime(G/H) \longrightarrow M(G/H) \longrightarrow M^{\prime\prime}(G/H) \]
is exact for all $H \in \mf{F}$.  If $\Omega_1$ and $\Omega_2$ are $G$-sets then we denote by $\ZZ[\Omega_1, \Omega_2]$ the free Abelian group on the set of all $G$-maps $\Omega_1 \to \Omega_2$. If $K \in \mf{F}$, the Bredon module $R[-,G/K]$ defined by 
\[ R[-,G/K](G/H) = R \otimes \ZZ[G/H,G/K] \]
is free, and taking direct sums of these gives all free Bredon modules.  Using these one can show the category of Bredon modules has enough projectives. 

Although we will rarely need them, in fact only in Section \ref{section:wrong notion of duality}, one can also define \emph{covariant Bredon modules} as covariant functors from $\OFG$ to the category of left $R$-modules.  As in the contravariant case the category of covariant Bredon modules is Abelian and exactness is defined pointwise, but the free covariant Bredon modules are direct sums of the module is $R[G/K, -]$, defined by
\[ R[G/K,-](G/H) = R \otimes \ZZ[G/K,G/H]\] 

If $M$ and $N$ are Bredon modules then we let $\Mor_{\mf{F}}(M, N)$ denote the $R$-module of Bredon module homomorphisms between $M$ and $N$---the natural transformations from $M$ to $N$.  Similarly one can define the group of Bredon module homomorphisms between covariant Bredon modules.  Given a Bredon module $M$ and a covariant Bredon module $A$ the tensor product, denoted $M \otimes_{\mathfrak{F}} A$, is
\[ M \otimes_{\mathfrak{F}} A = \left. \bigoplus_{G/H \in \OFG} M(G/H) \otimes_R A(G/H) \right/ \sim \]
Where $\alpha^*(m) \otimes a \sim m \otimes \alpha_*(a)$ for all morphisms $\alpha: G/H \to G/K$ in $\OFG$, elements $m \in M(G/K)$ and $a \in A(G/H)$, and $G/H,G/K \in \OFG$.  

\begin{Lemma}[The Yoneda-type Lemma]\label{lemma:yoneda-type}\cite[p.9]{MislinValette-BaumConnes}
For any Bredon module $M$ and $G/H \in \OFG$ there is an isomorphism, natural in $M$:
 \begin{align*}
\Mor_{\mathfrak{F}} \left( R[-,G/H], M\right) &\cong M(G/H) \\
 f &\mapsto f(G/H)(\id_{G/H})
 \end{align*}
Similarly for any covariant Bredon module $A$:
\[\Mor_{\mathfrak{F}} \left( R[G/H,-], A\right) \cong A(G/H)\]
\end{Lemma}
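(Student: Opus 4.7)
The plan is to produce the standard Yoneda construction, transported into the $R$-linear contravariant functor category over $\OFG$. The hard part is essentially bookkeeping — there is no creative step — so I will concentrate on constructing the inverse map and verifying naturality precisely once, then indicate that the covariant version is formally dual.

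First I would define the forward map $\Phi: \Mor_{\mathfrak{F}}(R[-,G/H], M) \to M(G/H)$ as in the statement, $f \mapsto f(G/H)(\id_{G/H})$, which is clearly $R$-linear. For the inverse $\Psi: M(G/H) \to \Mor_{\mathfrak{F}}(R[-,G/H], M)$, given $m \in M(G/H)$ I would build a natural transformation $\Psi(m)$ by setting, for each $G/K \in \OFG$,
\[ \Psi(m)(G/K) : R[G/K, G/H] \longrightarrow M(G/K), \qquad \alpha \longmapsto M(\alpha)(m), \]
extended $R$-linearly on the generating set of $G$-maps $G/K \to G/H$.

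Next I would check that $\Psi(m)$ is a Bredon module map: given a morphism $\beta : G/K' \to G/K$ in $\OFG$, the naturality square for $\Psi(m)$ amounts to the identity $M(\alpha \circ \beta) = M(\beta) \circ M(\alpha)$, which holds because $M$ is a contravariant functor; here one uses that the Bredon module $R[-,G/H]$ sends $\beta$ to the map $\alpha \mapsto \alpha \circ \beta$. Then I would verify $\Phi \circ \Psi = \id$ directly: $\Phi(\Psi(m)) = M(\id_{G/H})(m) = m$. For $\Psi \circ \Phi = \id$, given $f : R[-,G/H] \to M$ and an arbitrary morphism $\alpha : G/K \to G/H$, I would apply naturality of $f$ at $\alpha$ to the element $\id_{G/H} \in R[G/H,G/H]$, obtaining $f(G/K)(\alpha) = M(\alpha)(f(G/H)(\id_{G/H})) = \Psi(\Phi(f))(G/K)(\alpha)$; since elements of the form $\alpha$ generate $R[G/K,G/H]$ as an $R$-module, the two natural transformations agree.

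Finally, naturality in $M$ follows since a Bredon module map $g : M \to N$ satisfies $\Phi_N(g \circ f) = g(G/H)(f(G/H)(\id_{G/H})) = g(G/H)(\Phi_M(f))$. For the covariant statement I would run the identical argument on covariant functors, replacing the representable $R[-,G/H]$ by $R[G/H,-]$ and using covariance of $A$ in the formula $\Psi(a)(G/K)(\alpha) = A(\alpha)(a)$ for $\alpha : G/H \to G/K$. The only real obstacle is keeping straight which variance is in use and which composition order appears in the naturality square, but once the convention in the excerpt (a morphism $G/H \to G/K$ corresponds to $x^{-1}Hx \le K$) is fixed, this is routine.
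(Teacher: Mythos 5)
Your proof is correct, and it is the standard Yoneda argument transported into the contravariant $R$-linear functor category over $\OFG$; the paper itself does not prove this lemma but cites it to Mislin--Valette, so there is no in-paper proof to compare against, and the argument in the cited reference is this same one. One small point worth making explicit: you take it as given that $R[G/K,G/H]$ is \emph{free} on the set of $G$-maps $G/K \to G/H$ (rather than, say, a quotient), which is what licenses defining $\Psi(m)(G/K)$ on generators and extending $R$-linearly, and also the final ``the $\alpha$ generate'' step; this is how the paper defines $R[-,G/H]$, so you are fine, but it is the load-bearing fact. Otherwise the verifications of the naturality square for $\Psi(m)$, the two triangle identities $\Phi\Psi = \id$ and $\Psi\Phi = \id$, and naturality in $M$ are all correct, and your remark that the covariant case is formally dual (with the composition order flipped) is accurate.
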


A Bredon module $M$ is said to be finitely generated if it admits an epimorphism 
\[\bigoplus_{i \in I} R[-,G/H_i] \longtwoheadrightarrow M\]
with $I$ a finite set, is said to be $\uFP_n$ if it admits a projective resolution which is finitely generated in all degrees $\le n$ and is said to be $\uFP_\infty$ if it is $\uFP_n$ for all $n$.  The Bredon cohomological dimension of a Bredon module $M$ is the shortest length of a projective resolution of $M$.

We denote by $\uR$ the constant Bredon module on $R$, sending $G/H$ to $R$ for all finite $H$ and sending all morphisms to the identity.  A group $G$ is $\uFP_n$ if $\uR$ is $\uFP_n$ and the cohomological dimension of $G$, denoted $\ucd G$, is the shortest length of a projective resolution of $\uR$.  A group is $\uFP$ if it is $\uFP_\infty$ and has $\ucd G < \infty$.  If we need to emphasize the ring $R$ then we write $\ucd_R$ and ``$\uFP_n$ over $R$''.

The Bredon cohomology of a group $G$ with coefficients in a Bredon module $M$ is defined to be 
\[H^*_{\mf{F}} (G, M) \cong H^* \Mor_{\mf{F}}(P_*, M)\]
Where $P_*$ is a projective resolution of $\uR$.  The Bredon cohomological dimension can then be restated as
\[\ucd G = \sup\{ n : H^n_{\mf{F}}(G, M) \neq 0 \text{ for some Bredon module $M$. }\} \]
If $A$ is a covariant Bredon module then the Bredon homology of $G$ with coefficients in $A$ is defined as
\[H_*^{\mf{F}} (G, A) \cong H_* \left( P_* \otimes_{\mf{F}} A \right)\]
One can define flat modules and the Bredon homological dimension, but we will not require these.

\begin{Lemma}\label{lemma:OFFPn characterisation}\cite[Lemma 3.1, Lemma 3.2]{KMN-CohomologicalFinitenessConditionsForElementaryAmenable}, 
\begin{enumerate}
 \item $G$ is $\uFP_0$ if and only if $G$ has finitely many conjugacy classes of finite subgroups.
 \item A Bredon module $M$ is $\uFP_n$ ($n \ge 1$) if and only if $G$ is $\uFP_0$ and $M(G/K)$ is of type $\FP_n$ over the Weyl group $WK = N_GK/ K$ for all finite subgroups $K \le G$.
 \item $G$ is $\uFP_n$ if and only if the Weyl groups $WK = N_GK/K$ are $\FP_n$ for all finite subgroups $K$.
\end{enumerate}
\end{Lemma}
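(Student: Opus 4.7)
Part~(1) follows directly from the Yoneda-type Lemma~\ref{lemma:yoneda-type}: a Bredon module is finitely generated precisely when it admits an epimorphism $\bigoplus_{i=1}^{k} R[-,G/H_i] \twoheadrightarrow \uR$, and by Lemma~\ref{lemma:yoneda-type} such a morphism is determined by a choice of one element $r_i \in \uR(G/H_i) = R$ per summand. Its value at $G/K$ sends each generating $G$-map $\alpha \colon G/K \to G/H_i$ to $r_i$ (as $\uR$ is the constant Bredon module), so taking each $r_i = 1$ the map at $G/K$ is surjective iff there exists a $G$-map $G/K \to G/H_i$ for some $i$, i.e.\ iff $K$ is subconjugate to some $H_i$. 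Hence $\uR$ is finitely generated iff the $H_i$ meet every conjugacy class of finite subgroups, equivalently iff $G$ has only finitely many conjugacy classes of finite subgroups.

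For the forward direction of Part~(2), I would first observe that $R[-,G/H](G/K) = R[G/K,G/H]$ is the $RWK$-permutation module on the $WK$-set $(G/H)^K = \{xH : x^{-1}Kx \le H\}$. Since $H$ is finite, this set has finitely many $WK$-orbits (one per $H$-conjugacy class of subgroups of $H$ that are $G$-conjugate to $K$), and each stabiliser $(N_GK \cap xHx^{-1})/K$ is a finite subgroup of $WK$. Thus $R[G/K,G/H]$ is a finite direct sum of permutation modules $R[WK/L]$ with $L$ finite, each of which is $\FP_\infty$ over $RWK$ (finite groups being $\FP_\infty$ and induction preserving $\FP_\infty$). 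Applying the exact evaluation functor $(-)(G/K)$ to a projective resolution of $M$ that is finitely generated in degrees $\le n$ therefore produces a resolution of $M(G/K)$ by finitely generated $\FP_\infty$-modules over $RWK$ in those degrees, and a standard dimension-shifting argument then yields $M(G/K)$ of type $\FP_n$.

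For the reverse direction I would proceed by induction on $n$. Assume $G$ is $\uFP_0$ and $M(G/K)$ is $\FP_n$ over $RWK$ for every finite $K$. Let $K_1,\dots,K_m$ be representatives of the conjugacy classes of finite subgroups, and for each $i$ pick a finite generating set of $M(G/K_i)$ over $RWK_i$. By Lemma~\ref{lemma:yoneda-type} these generators assemble into a morphism $F_0 \to M$ from a finitely generated free Bredon module $F_0 = \bigoplus_{i,j} R[-,G/K_i]$; surjectivity at an arbitrary $G/K$ holds because any such $K$ is $G$-conjugate to some $K_i$, yielding a $G$-isomorphism $\phi \colon G/K \to G/K_i$ under which the chosen generators pull back into the image and span $M(G/K)$ as $RWK$-module. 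Setting $M' = \ker(F_0 \to M)$, the short exact sequence $0 \to M'(G/K) \to F_0(G/K) \to M(G/K) \to 0$ combined with the forward direction shows $M'(G/K)$ is $\FP_{n-1}$ over $RWK$ for each $K$, so by induction $M'$ is $\uFP_{n-1}$ and hence $M$ is $\uFP_n$. Part~(3) is then Part~(2) applied to $M = \uR$: $\uR(G/K) = R$ with trivial $WK$-action is $\FP_n$ over $RWK$ iff $WK$ is $\FP_n$ over $R$.

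The principal obstacle is the $RWK$-module analysis of $R[G/K,G/H]$ underpinning Part~(2): one must identify the permutation-module decomposition, confirm that all stabilisers are finite so each summand is $\FP_\infty$, and then extract an $\FP_n$-statement for $M(G/K)$ from a resolution whose terms are merely $\FP_\infty$ rather than projective.
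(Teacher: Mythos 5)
The paper does not prove this lemma; it is cited from \cite{KMN-CohomologicalFinitenessConditionsForElementaryAmenable}, so there is no in-paper argument to compare against. Your reconstruction is correct and follows the standard proof found in that reference: the Yoneda lemma together with a subconjugacy analysis gives part~(1); for part~(2) the key observation is precisely as you identify it, that $R[G/K,G/H]$ decomposes over $RWK$ as a finite direct sum of permutation modules $R[WK/L]$ with $L$ finite (hence $\FP_\infty$), after which dimension-shifting in the two short-exact-sequence directions yields both implications; and part~(3) is the specialisation to $M=\uR$.
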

Note that asking for $WK$ to be $\FP_n$ is equivalent to asking that the normalisers $N_GK$ are $\FP_n$ or that the centralisers $C_GK$ are $\FP_n$.

Many results about finiteness in ordinary group cohomology carry over into the Bredon case, for example in in \cite[Section 5]{MartinezNucinkis-GeneralizedThompsonGroups}, a version of the Bieri-Eckmann criterion is proven (see \cite[Section 1.3]{Bieri-HomDimOfDiscreteGroups} for the classical case).

Finally, we need the following two easy lemmas.

\begin{Lemma}\label{lemma:uFPn over Z then over R}
 If $G$ is $\uFP_n$ over $\ZZ$ then $G$ is $\uFP_n$ over $R$.
\end{Lemma}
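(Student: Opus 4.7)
The plan is to reduce the statement to the classical fact that the $\FP_n$ condition for an ordinary group is preserved under change of rings from $\ZZ$ to an arbitrary commutative ring $R$, and then invoke the characterisation of $\uFP_n$ given by Lemma \ref{lemma:OFFPn characterisation}.

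First I would note that the $\uFP_0$ condition is purely a statement about the set of conjugacy classes of finite subgroups of $G$, by part (1) of Lemma \ref{lemma:OFFPn characterisation}, and so is independent of the coefficient ring. Hence if $G$ is $\uFP_0$ over $\ZZ$ then $G$ is $\uFP_0$ over $R$.

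Next, assume $n \ge 1$. By part (3) of Lemma \ref{lemma:OFFPn characterisation}, $G$ being $\uFP_n$ over $\ZZ$ is equivalent to each Weyl group $WK$ being $\FP_n$ over $\ZZ$. The key classical input is that $\FP_n$ over $\ZZ$ implies $\FP_n$ over any commutative ring $R$: if $P_* \to \ZZ$ is a resolution by projective $\ZZ[WK]$-modules which is finitely generated in degrees $\le n$, then applying $-\otimes_\ZZ R$ yields a resolution $P_* \otimes_\ZZ R \to R$ by projective $R[WK]$-modules which is still finitely generated in degrees $\le n$ (exactness is preserved because each $P_i$ is $\ZZ$-free, being a direct summand of a free $\ZZ[WK]$-module). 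Therefore each $WK$ is $\FP_n$ over $R$.

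Applying part (3) of Lemma \ref{lemma:OFFPn characterisation} in the other direction, now over $R$, gives that $G$ is $\uFP_n$ over $R$. There is no real obstacle here since the result is essentially a combination of two easy, well-known facts; the only point to check carefully is that the tensor product $P_* \otimes_\ZZ R$ remains exact and finitely generated, which follows from $\ZZ$-freeness of the projective modules in the resolution.
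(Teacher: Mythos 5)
Your proof is correct, but it takes a genuinely different route from the paper's. The paper argues directly at the level of Bredon modules: take a projective $\OFG$-module resolution $P_*$ of $\uZZ$, finitely generated in degrees $\le n$, replace each $P_i$ by $G/H \mapsto R \otimes_\ZZ P_i(G/H)$, and observe that exactness is preserved because at each $G/H$ the evaluated complex is a $\ZZ$-split resolution (free $\ZZ$-modules over a PID). You instead funnel everything through the characterisation in Lemma \ref{lemma:OFFPn characterisation}: $\uFP_0$ is ring-independent by part (1), and for $n \ge 1$ the $\uFP_n$ condition is equivalent to each Weyl group $WK$ being $\FP_n$, which you transport from $\ZZ$ to $R$ by the classical tensoring argument for ordinary group rings. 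Both approaches hinge on the same algebraic fact (tensoring a $\ZZ$-free resolution with $R$ preserves exactness), but the paper's version is self-contained and works directly with Bredon resolutions, whereas yours leans on the nontrivial characterisation Lemma \ref{lemma:OFFPn characterisation} and in exchange reduces to a statement about ordinary group cohomology that most readers will find familiar. One small point worth being explicit about: part (3) of Lemma \ref{lemma:OFFPn characterisation} implicitly requires $G$ to be $\uFP_0$, so when you ``apply part (3) in the other direction, now over $R$'' you are really combining your separately-established $\uFP_0$ over $R$ with part (2) applied to $M = \uR$; you did supply the $\uFP_0$ step, so the argument is sound, but it would be cleaner to say you are invoking part (2) rather than part (3).
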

\begin{proof}
 Let $P_*$ be a projective resolution of $\uZZ$, then replacing each module $P_i$ with the module 
\[P_i^\prime : G/H \mapsto R \otimes P_i(G/H)\]
gives a projective resolution of $\uR$ by projective modules.  The resolution remains exact since $P_i(G/H)$ is a $\ZZ$-split resolution for all finite subgroups $H$.
\end{proof}

\begin{Lemma}\label{lemma:Rtf then cdRG le ucdRG}
 If $G$ is $R$-torsion-free then $\cd_R G \le \ucd_R G$.
\end{Lemma}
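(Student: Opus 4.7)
The plan is to start with a Bredon projective resolution $P_* \longtwoheadrightarrow \uR$ of length $n = \ucd_R G$ and show that evaluating it at $G/1$ yields an ordinary projective resolution of $R$ over $RG$ of the same length, which would immediately give $\cd_R G \le n$.

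First I would observe that exactness is automatic: by definition a sequence of Bredon modules is exact iff it is exact after evaluation at every $G/H$ with $H \in \mf{F}$, so in particular at $G/1$. Evaluating therefore produces an exact sequence of $RG$-modules
\[0 \longrightarrow P_n(G/1) \longrightarrow \cdots \longrightarrow P_0(G/1) \longrightarrow R \longrightarrow 0\]
of length $n$. Note also that the $WK$-action on $P_i(G/K)$ restricts, at $K=1$, to the natural $G$-action, so evaluation genuinely lands in $RG$-modules.

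Next I would check projectivity. Each $P_i$ is a direct summand of a free Bredon module $\bigoplus_{j} R[-,G/H_{i,j}]$ with the $H_{i,j}$ finite subgroups of $G$, and evaluation at $G/1$ sends this to $\bigoplus_j R[G/H_{i,j}]$. The point where I would use the hypothesis that $G$ is $R$-torsion-free is here: this means $|H|$ is invertible in $R$ for every finite subgroup $H \le G$, so that $R$ is a projective $RH$-module (split off from the regular representation via the averaging idempotent $\frac{1}{|H|}\sum_{h \in H} h$). Consequently $R[G/H] \cong RG \otimes_{RH} R$ is an induced projective, hence projective as an $RG$-module. Therefore each $P_i(G/1)$, a direct summand of a sum of projective $RG$-modules, is itself $RG$-projective.

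Combining these two points produces a projective $RG$-resolution of $R$ of length $\le n = \ucd_R G$, giving $\cd_R G \le \ucd_R G$. There is no real obstacle; the only content is the standard fact that invertibility of $|H|$ in $R$ promotes $R[G/H]$ from a permutation module to a projective $RG$-module, and the rest is the formal behaviour of evaluation $(-)(G/1)$ on the Bredon category.
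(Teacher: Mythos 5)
Your proposal is correct and follows essentially the same route as the paper: evaluate a minimal-length Bredon projective resolution at $G/1$, note exactness is automatic since it is defined pointwise, and use $R$-torsion-freeness to show that each $R[G/H]$ is $RG$-projective. The only cosmetic difference is that you phrase the projectivity of $R[G/H]$ via induction ($R[G/H]\cong RG\otimes_{RH}R$ with $R$ projective over $RH$), whereas the paper exhibits the averaging map $\frac{1}{|H|}\sum_{h\in H}h$ as an explicit splitting of $RG\twoheadrightarrow R[G/H]$; these are the same observation.
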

\begin{proof}
 For any finite subgroup $H$ of $G$, evaluating $R[-,G/H]$ at $G/1$ gives $R[G/H]$ and the natural projection $RG \longrightarrow R[G/H]$ is split by the map 
\[ H \longmapsto \frac{1}{\lvert H \rvert} \sum_{h \in H} h \]
 Hence evaluating any projective Bredon module at $G/1$ gives a projective $RG$-module.  The result follows by evaluating a length $n$ projective resolution of $\uR$ at $G/1$, where $n = \ucd_R G$.
\end{proof}

\section{Preliminary Observations}

A group $G$ is \emph{$R$-torsion-free} if the order of every finite subgroup of $G$ is invertible in $R$.  Equivalently one can show using Cauchy's theorem \cite[1.6.17]{Robinson} that this is equivalent to the order of every finite order element being invertible in $R$.

Recall that a Bredon duality group is said to be dimension $n$ if $\ucd G = n$.

\begin{Lemma}\label{Lemma:observations}~
\begin{enumerate}
\item If $G$ is Bredon duality of dimension $n$ over $\ZZ$ then $n_H = \cd_\QQ WH$ for all finite $H$, and $n_{1} \le n$.
\item If $G$ is $R$-torsion-free and Bredon duality of dimension $n$ over $R$ then $n_H = \cd_R WH$ and $n_{1} \le n$.
 \end{enumerate}
\end{Lemma}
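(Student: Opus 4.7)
The plan is to identify each Weyl group $WH$ as an ordinary duality group over the appropriate ring, read off its cohomological dimension, and then bound $n_1$ using Lemmas \ref{lemma:Rtf then cdRG le ucdRG} and \ref{lemma:uFPn over Z then over R}.

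For part (2), since $G$ is $\uFP$ over $R$, Lemma \ref{lemma:OFFPn characterisation} yields that $WH$ is $\FP$ over $R$ for every finite $H$. Combined with the hypothesis on $H^i(WH, R[WH])$, this says exactly that $WH$ is a duality group over $R$ of dimension $n_H$, so $n_H = \cd_R WH$. Taking $H = 1$ gives $n_1 = \cd_R G$, and since $G$ is $R$-torsion-free, Lemma \ref{lemma:Rtf then cdRG le ucdRG} gives $n_1 \le \ucd_R G = n$.

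For part (1), the same identification shows that $n_H = \cd_\ZZ WH$, and the additional step is to promote this to $\cd_\QQ WH$. For the upper bound $\cd_\QQ WH \le n_H$, I would apply $\QQ \otimes_\ZZ -$ to a length $n_H$ finitely generated projective $\ZZ WH$-resolution of $\ZZ$; since $\QQ$ is flat over $\ZZ$ and base change sends projectives to projectives, this produces a projective $\QQ WH$-resolution of $\QQ$ of the same length. For the lower bound, I would use that $WH$ is $\FP_\infty$ over $\ZZ$ (so $\Hom_{\ZZ WH}(P_*, -)$ commutes with the flat base change) to pull $\QQ \otimes_\ZZ -$ out of cohomology:
\[ H^{n_H}(WH, \QQ WH) \cong \QQ \otimes_\ZZ H^{n_H}(WH, \ZZ WH). \]
The right-hand side is the tensor product of $\QQ$ with a nonzero flat, hence $\ZZ$-torsion-free, module, so it is nonzero and thus $\cd_\QQ WH \geq n_H$.

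For the bound $n_1 \leq n$ in part (1), I would chain three inequalities: $n_1 = \cd_\QQ G$ by the previous paragraph applied to $H=1$; $\cd_\QQ G \leq \ucd_\QQ G$ by Lemma \ref{lemma:Rtf then cdRG le ucdRG}, noting every group is $\QQ$-torsion-free; and $\ucd_\QQ G \leq \ucd_\ZZ G = n$ because the pointwise base-change construction in the proof of Lemma \ref{lemma:uFPn over Z then over R} turns a length $n$ projective resolution of $\uZZ$ into one of $\underline{\QQ}$. The only subtlety is the nonvanishing of the dualising module $H^{n_H}(WH, \ZZ WH)$ used in the lower bound; this is implicit in $n_H$ being the dimension of a duality group and follows from $n_H = \cd_\ZZ WH$ together with $WH$ being $\FP$, so I expect no real obstacle in any individual step.
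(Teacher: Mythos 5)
Your proof of part (2) is essentially sound but has a small imprecision: Lemma \ref{lemma:OFFPn characterisation} yields only that $WH$ is $\FP_\infty$ over $R$, not $\FP$. To apply Proposition \ref{prop:FP over Q then cdQ = max H^n} (or to speak of ``the dimension of the duality group $WH$'') you must also know $\cd_R WH < \infty$. This does not follow from $\FP_\infty$ and the vanishing of $H^i(WH, R[WH])$ for $i > n_H$ alone; for instance a nontrivial finite group is $\FP_\infty$ with $H^i(\cdot, R[\cdot])$ concentrated in degree $0$, yet has infinite $\cd_R$ unless it is $R$-torsion-free. The paper supplies the missing finiteness via $\cd_R N_GH \le \cd_R G \le \ucd_R G < \infty$ (using $R$-torsion-freeness and Lemma \ref{lemma:Rtf then cdRG le ucdRG}) together with $\cd_R WH = \cd_R N_GH$. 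With that step inserted, your part (2) goes through.

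Part (1) contains a genuine error. You begin with ``the same identification shows $n_H = \cd_\ZZ WH$,'' but part (1) carries no $\ZZ$-torsion-free hypothesis: $WH$ may have torsion, in which case $\cd_\ZZ WH = \infty$ while $n_H$ is finite (take $G$ finite but not $2$-torsion-free, $H=1$). This is exactly why the statement is phrased with $\cd_\QQ WH$ rather than $\cd_\ZZ WH$. Consequently your upper-bound argument collapses, since there is no length-$n_H$ projective $\ZZ WH$-resolution of $\ZZ$ to tensor up. Your lower-bound argument is fine and is the same idea as the paper's, which uses the universal-coefficient sequence of \cite[Corollary 3.6]{Bieri-HomDimOfDiscreteGroups} and $\ZZ$-flatness of $H^*(WH,\ZZ[WH])$ to get $H^q(WH,\QQ[WH]) \cong H^q(WH,\ZZ[WH]) \otimes_\ZZ \QQ$. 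But you then need a correct upper bound: the route taken in the paper is to show $WH$ is $\FP$ over $\QQ$ (it is $\FP_\infty$ over $\QQ$, and $\cd_\QQ WH = \cd_\QQ N_GH \le \cd_\QQ G \le \ucd_\ZZ G = n < \infty$, using $\QQ$-torsion-freeness of every group and your correct observation that $\ucd_\QQ G \le \ucd_\ZZ G$), and then apply Proposition \ref{prop:FP over Q then cdQ = max H^n} over $\QQ$. Once you replace the false $\cd_\ZZ$ claim with this argument, the final step $n_1 = \cd_\QQ G \le \ucd_\QQ G \le \ucd_\ZZ G = n$, which you argue correctly and in a pleasantly self-contained way, closes the proof.
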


To prove the Lemma we need the following proposition, an analog of \cite[VIII.6.7]{Brown} for arbitrary rings $R$ and proved in exactly the same way.
\begin{Prop}\label{prop:FP over Q then cdQ = max H^n}
 If $G$ is $\FP$ over $R$ then $\cd_R G = \max \{ n  :  H^n(G, R  G) \neq 0\}$.
\end{Prop}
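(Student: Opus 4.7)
The plan is to prove the two inequalities separately; write $m = \max\{n : H^n(G, RG) \neq 0\}$, which exists and is finite since $\cd_R G < \infty$ is part of the $\FP$ hypothesis. The inequality $m \le \cd_R G$ is immediate from the definition of $\cd_R G$, as $H^n(G, M) = 0$ for every $RG$-module $M$ (in particular $M = RG$) whenever $n > \cd_R G$.

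For the reverse inequality, I would first show that for every $n$ the functor $H^n(G, -)$ commutes with arbitrary direct sums of $RG$-modules. Since $G$ is of type $\FP_\infty$ over $R$, there exists a projective resolution $P_* \to R$ with each $P_i$ finitely generated as an $RG$-module, and for any finitely generated $P_i$ the canonical map $\bigoplus_\alpha \Hom_{RG}(P_i, M_\alpha) \to \Hom_{RG}(P_i, \bigoplus_\alpha M_\alpha)$ is an isomorphism. Since direct sums in $R$-modules are exact, they commute with cohomology of cochain complexes, giving $H^n(G, \bigoplus_\alpha M_\alpha) \cong \bigoplus_\alpha H^n(G, M_\alpha)$ for all $n$. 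Specialising to a free $RG$-module $F \cong \bigoplus_I RG$ then yields $H^n(G, F) = 0$ for all $n > m$.

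The remainder is a dimension-shifting argument. For an arbitrary $RG$-module $M$, pick a short exact sequence $0 \to K \to F \to M \to 0$ with $F$ free. For $n > m$ both $H^n(G, F)$ and $H^{n+1}(G, F)$ vanish, so the long exact cohomology sequence gives $H^n(G, M) \cong H^{n+1}(G, K)$. Iterating yields $H^n(G, M) \cong H^{n+j}(G, K^{(j)})$ for every $j \ge 0$, where $K^{(j)}$ denotes the $j$-th iterated kernel. The finiteness of $\cd_R G$ then lets us pick $j$ with $n+j > \cd_R G$, forcing the right-hand side to vanish, so $H^n(G, M) = 0$ for every $M$ and every $n > m$, whence $\cd_R G \le m$. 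I do not anticipate a serious obstacle: the argument is Brown's proof of VIII.6.7 transposed essentially verbatim from $\ZZ$ to a commutative ring $R$, and the only ingredient specific to the coefficient ring is the commutation of $H^n(G, -)$ with direct sums, which depends solely on the $P_i$ being finitely generated and on the exactness of direct sums in $R$-modules.
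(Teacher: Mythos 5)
Your proof is correct and is exactly the argument the paper intends: the paper simply states that Proposition \ref{prop:FP over Q then cdQ = max H^n} is ``an analog of \cite[VIII.6.7]{Brown} for arbitrary rings $R$ and proved in exactly the same way,'' and your write-up is precisely that transposition, with the only $R$-specific input being (as you note) that $H^n(G,-)$ commutes with direct sums because the $P_i$ are finitely generated and direct sums are exact over $R$.
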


\begin{proof}[Proof of Lemma \ref{Lemma:observations}]
\begin{enumerate}
 \item Since $G$ is $\uFP$, $WH$ is $\FP_\infty$ for all finite $H$ (Lemma \ref{lemma:OFFPn characterisation}) and we may apply \cite[Corollary 3.6]{Bieri-HomDimOfDiscreteGroups} to get a short exact sequence
\[0 \to H^q(WH, \ZZ [WH]) \otimes_\ZZ \QQ \to H^q(WH, \QQ \otimes_\ZZ \ZZ [WH]) \]
\[\to \Tor_1^\ZZ(H^{q+1}(WH, \ZZ [WH]) ,\QQ) \to 0\]
$H^{q+1}(WH, \ZZ [WH])$ is $\ZZ$-flat for all $q$ giving an isomorphism
\[H^q(WH, \ZZ [WH]) \otimes_\ZZ \QQ \cong H^q(WH, \QQ [WH])\]
Proposition \ref{prop:FP over Q then cdQ = max H^n} shows $n_H = \cd_\QQ WH$.  Finally, $\cd_\QQ G \le \ucd_\ZZ G$ for all groups $G$ by \cite[Theorem 2]{BradyLearyNucinkis-AlgAndGeoDimGroupsWithTorsion}, so $n_{1} \le n$.
\item If $G$ is $R$-torsion free then for any finite subgroup $H$, 
\[\cd_R N_GH \le \cd_R G \le \ucd_R G\]
and $N_GH$ is $\FP_\infty$ over $R$ by Lemma \ref{lemma:OFFPn characterisation}.  Since 
\[H^i(N_G H, R[N_G H]) \cong H^i(WH, R[WH])\]
Proposition \ref{prop:FP over Q then cdQ = max H^n} shows $n_H = \cd_R N_GH = \cd_R WH$.  Finally, $n_{1} \le n$ because $\cd_R G \le \ucd_R G$ (Lemma \ref{lemma:Rtf then cdRG le ucdRG}).
\end{enumerate} 
\end{proof}

\begin{Lemma}\label{lemma:uDn over Z then uDn over R}
 If $G$ is Bredon duality of dimension $n$ over $\ZZ$ then $G$ is Bredon duality of dimension $n$ over any ring $R$.
\end{Lemma}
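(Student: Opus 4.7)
The plan is to verify all three requirements in the definition of a Bredon duality group over $R$: that $G$ is of type $\uFP$ over $R$, that for each finite subgroup $H$ the Weyl group cohomology $H^i(WH, R[WH])$ has the prescribed form, and that $\ucd_R G = n$.

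For the finiteness condition, Lemma~\ref{lemma:uFPn over Z then over R} applied for every $n$ gives that $G$ is of type $\uFP_\infty$ over $R$. Moreover, inspection of that proof shows that the pointwise tensor product with $R$ of a length-$n$ projective resolution of $\uZZ$ is a length-$n$ projective resolution of $\uR$, hence $\ucd_R G \le n$ and $G$ is of type $\uFP$ over $R$.

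For the duality condition, fix a finite subgroup $H$. By Lemma~\ref{lemma:OFFPn characterisation}, $WH$ is of type $\FP_\infty$ over $\ZZ$, so the same application of \cite[Corollary 3.6]{Bieri-HomDimOfDiscreteGroups} used in the proof of Lemma~\ref{Lemma:observations} (now with $R$ in place of $\QQ$) produces, for each $q$, a short exact sequence
\[0 \to H^q(WH, \ZZ [WH]) \otimes_\ZZ R \to H^q(WH, R [WH]) \to \Tor_1^\ZZ\bigl(H^{q+1}(WH, \ZZ[WH]), R\bigr) \to 0.\]
Since each $H^{q+1}(WH, \ZZ[WH])$ is $\ZZ$-flat by hypothesis, the $\Tor$-term vanishes and one obtains a natural isomorphism $H^q(WH, R[WH]) \cong H^q(WH, \ZZ[WH]) \otimes_\ZZ R$; this vanishes for $q \ne n_H$ and is $R$-flat for $q = n_H$, being the tensor product of a $\ZZ$-flat module with $R$. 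Thus the duality condition transfers to $R$ with the same integers $n_H$.

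It remains to upgrade $\ucd_R G \le n$ to equality. The standard identification $H^*_{\mathfrak{F}}(G, R[-,G/K]) \cong H^*(WK, R[WK])$ for finite $K$, combined with the Bredon analogue of the Bieri--Eckmann criterion for $\uFP$ groups, expresses $\ucd_R G$ as the largest $q$ at which some $H^q(WK, R[WK])$ is non-zero, equivalently (by the previous paragraph) the largest $n_K$ at which $H^{n_K}(WK, \ZZ[WK]) \otimes_\ZZ R$ is non-zero. The main obstacle I anticipate is showing that this maximum is still attained at $n$ after the base change $\ZZ \to R$; this requires the dualising module $H^{n_K}(WK, \ZZ[WK])$ at some $K$ realising $n_K = n$ to survive $\otimes_\ZZ R$. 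I would handle this point by appealing to the classical fact that for groups of type $\FP$ the dualising module of a duality group is $\ZZ$-free (and hence non-zero after any non-zero base change), reducing the question to the analogous statement for the ordinary duality group $WK$.
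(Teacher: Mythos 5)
Your first two paragraphs reproduce the paper's proof: Lemma~\ref{lemma:uFPn over Z then over R} gives the $\uFP$ property over $R$ (and, as you note, $\ucd_R G \le n$ from the same tensored resolution), and the universal-coefficient argument from Lemma~\ref{Lemma:observations}(1) gives the natural isomorphism $H^q(WH,R[WH]) \cong H^q(WH,\ZZ[WH])\otimes_\ZZ R$, whence the duality condition on the Weyl groups transfers to $R$. The paper's proof stops there; it does not explicitly verify that $\ucd_R G$ equals $n$ rather than something smaller, so the instinct behind your third paragraph is a reasonable one.

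However, the argument you offer to close that gap does not hold up. The assertion that the dualising module of a duality group is $\ZZ$-free is not a classical fact: what is standard is that it is $\ZZ$-flat, i.e.\ torsion-free, and a torsion-free abelian group need neither be free nor survive an arbitrary base change (for instance $\QQ\otimes_\ZZ\FF_p = 0$). The identification $H^*_{\mathfrak{F}}(G,R[-,G/K]) \cong H^*(WK,R[WK])$ and the Bredon analogue of \cite[VIII.6.7]{Brown} that you invoke are also asserted without proof or reference and are not established in the paper. Furthermore, even granting these, $WK$ is only known to be of type $\FP_\infty$ over $\ZZ$; if $WK$ has torsion it need not be of type $\FP$ over $\ZZ$, so you cannot simply quote the classical theory of duality groups for $WK$. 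In summary, your first two paragraphs match the paper exactly, while your third paragraph correctly flags a subtlety that the paper's proof glosses over, but the resolution you sketch there is not correct as written.
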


\begin{proof}
 Since $G$ is $\uFP$ over $\ZZ$, $G$ is $\uFP$ over $R$ (Lemma \ref{lemma:uFPn over Z then over R}). As in the proof of part (1) of the previous lemma there is an isomorphism for any finite subgroup $H$,
\[H^q(WH, \ZZ [WH]) \otimes_\ZZ R \cong H^q(WH, R [WH])\]
Observing that if an Abelian group $M$ is $\ZZ$-flat then $M \otimes_\ZZ R$ is $R$-flat completes the proof.
\end{proof}

In the proposition below $\Fcd G$ denotes the $\mathfrak{F}$-cohomological dimension introduced in \cite{Nucinkis-CohomologyRelativeGSet} and $\Gcd G$ denotes the Gorenstein cohomological dimension, see for example \cite{BahlekehDembegiotiTalelli-GorensteinDimensionAndProperActions}.

\begin{Prop}
 If $G$ is a Bredon-duality group over $R$ then $\Gcd G = \Fcd G = n_1$ and if $G$ is virtually torsion-free then $\vcd G = n_1$ also.
\end{Prop}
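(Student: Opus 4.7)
The plan is to unpack the $H=1$ specialization of the Bredon duality hypothesis. Since $W1 = N_G(1)/1 = G$, the hypothesis simply reads
\[ H^i(G, RG) \cong \begin{cases} R\text{-flat} & i = n_1, \\ 0 & \text{otherwise,} \end{cases} \]
i.e.\ $G$'s ordinary cohomology with $RG$-coefficients is concentrated in the single degree $n_1$. Each of the three invariants is then handled by transporting this concentration through an appropriate comparison.

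For $\vcd G$ in the virtually torsion-free case I would pick a torsion-free subgroup $H \le G$ of finite index. Because $\uE G$ is also a model for $\uE H$ we have $\ucd H \le \ucd G < \infty$; since $H$ is torsion-free this reduces to $\cd_R H < \infty$, and combined with $\FP_\infty$-ness inherited from $G = W_G 1$ (via Lemma \ref{lemma:OFFPn characterisation}) one concludes that $H$ is of type $\FP$ over $R$. Shapiro's lemma applied to the finite-index inclusion (via the right $RH$-module decomposition $RG \cong \bigoplus_{g \in G/H} g\cdot RH$) transports the vanishing to $H^i(H, RH) \cong H^i(G, RG)$, concentrated in degree $n_1$; Proposition \ref{prop:FP over Q then cdQ = max H^n} then identifies $\cd_R H = n_1$, whence $\vcd G = n_1$.

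For $\Fcd G = \Gcd G = n_1$, the upper bound by $\ucd G$ is straightforward: evaluating a finite-length Bredon projective resolution of $\uR$ at $G/1$ gives a finite-length resolution of $R$ by $RG$-modules of the form $R[G/K]$ with $K \in \mathfrak{F}$, and such modules are simultaneously $\mathfrak{F}$-free and (since $RK$ is Frobenius for finite $K$, so induction of Gorenstein projectives is Gorenstein projective) Gorenstein projective. To sharpen the upper bound to $n_1$ and match it with the corresponding lower bound, I would invoke the relative homological algebra dictionary---due to Nucinkis for $\Fcd$ and to Bahlekeh--Dembegioti--Talelli (and subsequent authors) for $\Gcd$---which for $\uFP_\infty$ groups computes both invariants in terms of ordinary cohomology $H^\ast(G, RG)$; combined with the concentration in degree $n_1$ obtained from $H=1$, this yields both equalities.

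The main obstacle is the sharp identification of $\Fcd G$ and $\Gcd G$, which genuinely relies on relative homological machinery beyond the naive evaluation-at-$G/1$ bound; by contrast the $\vcd$ statement is essentially Shapiro's lemma plus Bieri's classical identification of $\cd_R$ with $\max\{n : H^n(G,RG) \ne 0\}$ for $\FP$ groups.
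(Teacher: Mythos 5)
Your $\vcd$ argument is correct and in fact more elementary than the paper's: the paper first establishes $\Fcd G = n_1$ and then cites Martinez-P\'erez--Nucinkis for $\Fcd G = \vcd G$ on virtually torsion-free groups, whereas you prove $\vcd G = n_1$ directly via Shapiro's lemma and Proposition \ref{prop:FP over Q then cdQ = max H^n}. That is a perfectly good alternate route.

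The Gorenstein and $\mathfrak{F}$-dimension claims, however, are not actually established. For $\Gcd G$ your naive evaluation-at-$G/1$ argument gives $\Gcd G \le \ucd G$, which is needed but is not sharp enough; the tool that closes the gap is Holm's Theorem 2.20, which (once $\Gcd G < \infty$ is known) characterises $\Gcd G$ as $\sup\{n : H^n(G,P) \neq 0,\ P\text{ projective}\}$, and for an $\FP_\infty$ group this reduces to $P = RG$ since cohomology then commutes with arbitrary direct sums. You gesture at this with the phrase ``relative homological algebra dictionary'' but do not supply the statement, so the key step is missing. More seriously, there is no analogous result computing $\Fcd G$ directly from $H^*(G,RG)$: the $\mathfrak{F}$-cohomological dimension is an invariant of a genuinely different cohomology theory, and it is not ``due to Nucinkis'' that it is read off from ordinary cohomology. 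The paper instead obtains $\Fcd G = n_1$ by a comparison theorem, namely that $\Fcd G = \Gcd G$ whenever $\Fcd G < \infty$ (which holds here since $\Fcd G \le \ucd G < \infty$); this is a nontrivial result from the author's paper \cite{Me-GorensteinAndF} and is the step your proposal has no replacement for.
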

\begin{proof}
 This proof uses an argument due to Degrijse and Mart\'{\i}nez--P\'erez in \cite{DegrijseMartinezPerez-DimensionInvariants}.  By \cite[Theorem 2.20]{Holm-GorensteinHomologicalDimensions} the Gorenstein cohomological dimenion, denoted $\Gcd G$, can be characterised as
 \[ \Gcd G = \sup \{ n : H^n(G, P) \neq 0 \text{ for $P$ any projective $RG$-module } \}  \] 
 As $G$ is $\FP_\infty$ we need only check when $P = RG$ and hence $\Gcd G = n_1$.  Since $\Fcd G \le \ucd G < \infty$, we can conclude that $\Fcd G = \Gcd G$ \cite[Theorem 3.11]{Me-GorensteinAndF} and finally for virtually torsion-free groups $\Fcd G = \vcd G$ \cite{MartinezPerezNucinkis-MackeyFunctorsForInfiniteGroups}.
\end{proof}

\section{Examples}
In this section we provide several sources of examples of Bredon duality and Bredon--Poincar\'e duality groups, showing that these properties are not too rare.

\subsection{Smooth Actions on Manifolds}\label{subsection:duality smooth actions on manifolds}

Recall from the introduction that if $G$ has a manifold model $M$ for $\uE G$ such that $M^H$ is a submanifold for all finite subgroups $H$ then $G$ is Bredon--Poincar\'e duality.  The following lemma guarantees that $M^H$ is a submanifold of $M$:
\begin{Lemma}\label{Lemma:proper and locally linear => fixed points are submanifolds}\cite[10.1 p.177]{Davis} If $G$ is a discrete group acting properly and locally linearly on a manifold $M$ then the fixed points subsets of finite subgroups of $G$ are submanifolds of $M$.
\end{Lemma}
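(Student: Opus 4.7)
The plan is to work locally around an arbitrary point $x \in M^H$ and use local linearity together with properness to realise $M^H$ near $x$ as a linear subspace of a disc.

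First, I would observe that properness of the action forces the stabiliser $G_x$ to be finite. Indeed, any point $x$ has a compact neighbourhood $K$, and $\{g \in G : gx = x\} \subseteq \{g \in G : gK \cap K \neq \emptyset\}$, which is finite by properness. In particular $H \subseteq G_x$ is a subgroup of a finite group.

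Next, by the definition of a locally linear action, there exists a $G_x$-invariant open neighbourhood $U$ of $x$ and a $G_x$-equivariant homeomorphism $\varphi \colon U \to D^n$ onto an open disc in $\RR^n$ (centred at $\varphi(x) = 0$), where $G_x$ acts on $D^n$ via an orthogonal representation $\rho \colon G_x \to O(n)$. Since $H \subseteq G_x$, the restriction of $\rho$ to $H$ is again orthogonal, and the fixed point set of an orthogonal representation of a finite group on $\RR^n$ is a linear subspace $V \subseteq \RR^n$. Therefore
\[
\varphi(U \cap M^H) = \varphi(U)^H = D^n \cap V,
\]
which is an open disc in the linear subspace $V$, hence a smooth/topological submanifold of $D^n$.

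Transporting this back via $\varphi^{-1}$, we see that $U \cap M^H$ is a submanifold of $U$ of dimension $\dim V$. Since $x \in M^H$ was arbitrary, $M^H$ is locally a submanifold of $M$ at every one of its points, and therefore $M^H$ is a submanifold of $M$. The main (very mild) obstacle is simply marshalling the definitions: checking that local linearity provides a neighbourhood invariant under the full stabiliser $G_x$ (and in particular under $H$), and recalling that the fixed set of an orthogonal action on a disc is a sub-disc; no further argument is required.
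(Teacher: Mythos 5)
Your proof is correct, and since the paper simply cites Davis~\cite[10.1 p.177]{Davis} rather than giving an argument, the relevant comparison is with Davis's own proof, which proceeds in exactly this way: the stabiliser $G_x$ is finite by properness, the locally linear structure supplies a $G_x$-equivariant chart onto a disc with a linear (orthogonal) $G_x$-action, and the $H$-fixed set in that chart is the intersection of the disc with the linear subspace $\ker(\rho(h)-I)$ over $h\in H$, so that $(U, U\cap M^H)$ is homeomorphic to the flat pair $(D^n, D^n\cap V)$. You have captured all the essential points, including the small observation that $x\in M^H$ forces $H\subseteq G_x$ so that the local linearity applies to $H$.

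Two minor remarks worth keeping in mind. First, the resulting $M^H$ is in general a disjoint union of locally flat submanifolds whose dimensions may vary from component to component (the dimension $\dim V$ depends on $x$), which is consistent with the usage in the paper. Second, the fixed set of any finite linear action is a linear subspace regardless of orthogonality, so the passage through an orthogonal representation (while harmless and standard) is not strictly needed; what matters is only that $\rho(H)$ acts linearly, whence $\bigcap_{h\in H}\ker(\rho(h)-I)$ is a subspace.
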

Locally linear is a technical condition, the definition of which can be found in \cite[Definition 10.1.1]{Davis}, for our purposes it is enough to know that if $M$ is a smooth manifold and $G$ acts by diffeomorphisms then the action is locally linear.  The locally linear condition is necessary however---in \cite{DavisLeary-DiscreteGroupActionsOnAsphericalManifolds} examples are given of virtually torsion-free groups acting as a discrete cocompact group of isometries of a CAT(0) manifold which are not Bredon duality.

\begin{Example}Let $p$ be a prime and let $G$ be the wreath product
 \[G = \ZZ \wr C_p = \left( \bigoplus_{\ZZ_p} \ZZ \right) \rtimes C_p\]
Where $C_p$ denotes the cyclic group of order $p$.  $G$ acts properly and by diffeomorphisms on $\RR^p$: The copies of $\ZZ$ act by translation along the axes, and the $C_p$ permutes the axes.  The action is cocompact with fundamental domain the quotient of the $p$-torus by the action of $C_p$.  The finite subgroup $C_p$ is a representative of the only conjugacy class of finite subgroups in $G$, and has fixed point set the line $\{(\lambda,\cdots,\lambda)  :  \lambda \in \RR \}$.  
If $z = (z_1, \ldots, z_p) \in \ZZ^p$ then the fixed point set of $(C_p)^z$ is the line $\{ (\lambda + z_1, \ldots, \lambda + z_p)  :  \lambda \in \RR \}$.

Hence $\RR^p$ is a model for $\uE G$ and, invoking Lemma \ref{Lemma:proper and locally linear => fixed points are submanifolds},  $G$ is a Bredon--Poincar\'e duality group of dimension $p$ with $\mathcal{V} = \{1\}$.
\end{Example}

\begin{Example}\label{example:duality Zn antipodal}
Fixing positive integers $m \le n$, if $G = \ZZ^n \rtimes C_2$ where $C_2$, the cyclic group of order 2, acts as the antipodal map on $\ZZ^{n-m} \le \ZZ^n$ then 
\[ N_GC_2 = C_GC_2 = \{ g \in G  :  gz = zg \}  \]
But this is exactly the fixed points of the action of $C_2$ on $G$, hence $N_G C_2 = \ZZ^m \rtimes C_2$ and
\[ H^i(N_GC_2, R[N_G C_2]) \cong \left\{ \begin{array}{l l} R & \text{ if } i = m \\ 0 & \text{ else.} \end{array}\right. \]
$G$ embeds as a discrete subgroup of $\text{Isom}(\RR^n) = \RR^n \rtimes GL_n(\RR)$ and acts properly and cocompactly on $\RR^n$.  It follows that $G$ is $\uFP$ and $\ucd G = n$ so $G$ is Bredon--Poincar\'e duality of dimension $n$ over any ring $R$ with $\mathcal{V} = \{m\}$.
\end{Example}

\begin{Example}\label{example:duality Zn antipodal generalised}
Similarly to the previous example we can take 
\[ G = \ZZ^n \rtimes \bigoplus_{i=1}^n C_2 \]
Where the $j^\text{th}$ copy of $C_2$ acts antipodally on the $j^\text{th}$ copy of $\ZZ$ in $\ZZ^n$.  Note that $G$ is isomorphic to $(D_\infty)^n$ where $D_\infty$ denotes the infinite dihedral group.  As before $G$ embeds as a discrete subgroup of $\text{Isom}(\RR^n) = \RR^n \rtimes GL_n(\RR)$ and acts properly and cocompactly on $\RR^n$.  Thus $G$ is $\uFP$ and $\ucd G = n$, so $G$ is Bredon--Poincar\'e duality of dimension $n$ over any ring $R$ with $\mathcal{V}(G) = \{0, \ldots, n\}$.

More generally, we could take a subgroup $\bigoplus_{i=1}^m C_2 \longhookrightarrow \bigoplus_{i=1}^nC_2$ and form the semi-direct product of $\ZZ^n$ with this subgroup.  Although this gives us a range of possible values for $\mathcal{V}(G)$ it is impossible to produce a full range of values with this method.  For example one can show that a Bredon--Poincar\'e duality group of dimension $4$ with the form
\[G = \ZZ^4 \rtimes \bigoplus_{i = 1}^m C_2 \]
cannot have $\mathcal{V}(G) = \{1, 3\}$.  
\end{Example}

\begin{Example}\label{example:FarbWeinberger}In \cite[Theorem 6.1]{FarbWeinberger-IsometriesRigidityAndUniversalCovers}, Farb and Weinberger construct a group $G$ acting properly cocompactly and by diffeomorphisms on $\RR^n$ for some $n$.  Thus $G$ is a Bredon--Poincar\'e duality group, however it is not virtually torsion-free. 
\end{Example}

\subsection{A counterexample to the generalised \texorpdfstring{PD$^\text{n}$}{PDn} conjecture}\label{section:counterexample to generalised PDn}

Let $G$ be Bredon--Poincar\'e duality over $\ZZ$, such that $WH$ is finitely presented for all finite subgroups $H$.  One might ask if $G$ admits a cocompact manifold model $M$ for $\uE G$, where for each finite subgroup $H$ the fixed point set $M^H$ is a submanifold?  This is generalisation of the famous PD$^\text{n}$-conjecture, due to Wall \cite{Wall-HomologicalGroupTheory}.  This example is due to Jonathon Block and Schmuel Weinberger and was suggested to us by Jim Davis.

\begin{Theorem}\label{theorem:BPD groups with no manifold model}
 There exist examples of Bredon--Poincar\'e duality groups over $\ZZ$, such that $WH$ is finitely presented for all finite subgroups $H$ but $G$ doesn't admit a cocompact manifold model $M$ for $\uE G$.
\end{Theorem}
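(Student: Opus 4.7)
The plan is to realise $G$ as a semidirect product $Q \rtimes F$, where $Q$ is a torsion-free Poincar\'e duality group of dimension $n$ arising as $\pi_1 N$ for a closed aspherical manifold $N$, and $F$ is a finite group equipped with an outer action on $Q$ that, by work of Block and Weinberger, cannot be realised by any topological action of $F$ on a closed manifold homotopy equivalent to $N$. Since $Q$ is finitely presented and $F$ is finite, $G$ is finitely presented; and because $Q$ is torsion-free, every finite subgroup of $G$ is conjugate into the canonical copy of $F$ inside $G$.

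To verify Bredon--Poincar\'e duality over $\ZZ$, for a finite subgroup $H \le F \le G$ I compute that the normaliser equals $Q^H \rtimes N_F(H)$, where $Q^H$ denotes the $H$-fixed subgroup of $Q$ under the given action, whence
\[ WH \cong Q^H \rtimes (N_F(H)/H). \]
Choosing the Block--Weinberger input so that each $Q^H$ is itself a Poincar\'e duality group of some dimension $n_H$ with orientation-preserving residual action, a Lyndon--Hochschild--Serre argument (using that finite groups have trivial cohomology with projective coefficients in positive degree) gives $H^i(WH, \ZZ[WH]) \cong \ZZ$ for $i = n_H$ and $0$ otherwise. Lemma \ref{lemma:OFFPn characterisation} then delivers $\uFP$, and each $WH$ is finitely presented because it is built from the finitely presented group $Q^H$ and a finite group.

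To show $G$ admits no cocompact manifold model for $\uE G$, suppose for contradiction that $M$ were such a model. Since $Q$ is torsion-free, it acts freely and properly on the contractible manifold $M$, so $M/Q$ is an aspherical manifold with $\pi_1(M/Q) \cong Q$, of the same dimension as $M$, and closed because $M/G$ is compact and $M/Q \to M/G$ is a finite cover. The residual action of $F = G/Q$ on $M/Q$ is then a genuine topological $F$-action whose induced outer action on $Q = \pi_1(M/Q)$ recovers the extension data defining $G$, directly contradicting the Block--Weinberger obstruction.

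The main obstacle is the second step: showing that the fixed subgroups $Q^H$ are honest Poincar\'e duality groups over $\ZZ$ of the correct dimension, with orientation-preserving residual action. Since no manifold model for $\uE G$ is available, one cannot read off $Q^H$ as the fundamental group of a fixed-point submanifold; one must therefore either exploit Smith-theoretic inputs already present in the Block--Weinberger construction, or select examples whose centraliser structure is algebraically transparent from the outset.
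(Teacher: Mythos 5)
The obstacle you flag at the end is a genuine gap, and it stems from putting the ``good'' piece of the extension on the wrong side. You place the torsion-free Poincar\'e duality group $Q$ in the kernel and the finite group $F$ in the quotient, so the Weyl groups become (up to finite index) the fixed subgroups $Q^H$, and you then must show each $Q^H$ is a Poincar\'e duality group of the right dimension. Nothing in the Block--Weinberger construction gives you algebraic control of these centralisers, and Smith-theoretic arguments are unavailable precisely because you have no manifold model for $\uE G$ to work in. The paper instead invokes the Block--Weinberger extension $1 \to H \to G \to Q \to 1$ with $Q$ \emph{finite} and the \emph{kernel} $H$ chosen so that $H$ contains all the torsion of $G$ \emph{and} admits a cocompact manifold model for $\uE H$. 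Every finite $K \le G$ then lies in $H$, the normaliser $N_H K$ has finite index in $N_G K$, and Shapiro's lemma transfers the Weyl group cohomology of $H$ (supplied by the manifold model for $\uE H$) directly to $G$. No identification of Weyl groups as semidirect products and no computation of fixed subgroups is ever needed; the manifold model lives on the kernel, where it can actually be used.

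Two smaller inaccuracies: in $Q \rtimes F$ with $Q$ torsion-free, finite subgroups of $G$ need not all be conjugate into the chosen complement $F$ --- in $D_\infty = \ZZ \rtimes C_2$ the reflections split into two conjugacy classes, only one of which meets $C_2$. Conjugacy classes of complements are governed by a nonabelian first cohomology set, which would also have to be finite to give $\uFP_0$. Correspondingly, your formula $N_G(H) = Q^H \rtimes N_F(H)$ is valid for $H \le F$, but the other complements require taking fixed points with respect to a conjugated action. These are not fatal, but they must be handled, and they are further symptoms of choosing a decomposition that forces you to understand the torsion ``from inside'' rather than reading everything off from the finite-index subgroup that carries the manifold.
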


Combining Theorems 1.5 and 1.8 of \cite{BlockWeinberger-GeneralizedNielsenRealizationProblem} gives the following example.

\begin{Theorem}[Block--Weinberger]\label{theorem:block--weinberger}
There exists a short exact sequence of groups 
\[ 
1 \longrightarrow H \longrightarrow G \longrightarrow Q \longrightarrow 1
\]
with $Q$ finite, such that
\begin{enumerate}
 \item All torsion in $G$ is contained in $H$.
 \item There exists a cocompact manifold model for $\uE H$.
 \item $\ugd G < \infty$.
 \item There exists no manifold model for $\uE G$.
\end{enumerate} 
\end{Theorem}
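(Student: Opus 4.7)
The plan is to verify that the group $G$ produced by Theorem \ref{theorem:block--weinberger} is itself a Bredon--Poincar\'e duality group over $\ZZ$ with finitely presented Weyl groups; item (4) of that theorem then immediately rules out a cocompact manifold model for $\uE G$. The key structural observation is that since $Q$ is finite and all torsion of $G$ lies in $H$ by item (1), every finite subgroup $K \le G$ is in fact contained in $H$, so $G$ and $H$ share the same finite subgroups. Moreover, $N_H K = N_G K \cap H$, so $W_H K$ sits inside $W_G K$ as a subgroup of index at most $|Q|$.

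To apply the assumption in item (2) I need the cocompact manifold model $M$ for $\uE H$ to be a smooth (or at least locally linear) action of $H$, so that Lemma \ref{Lemma:proper and locally linear => fixed points are submanifolds} makes each $M^K$ a submanifold; this should be read out of the construction in \cite{BlockWeinberger-GeneralizedNielsenRealizationProblem}. Granting this, the computation recalled in the introduction gives
\[ H^i(W_H K, \ZZ[W_H K]) \cong \begin{cases} \ZZ & \text{if } i = \dim M^K, \\ 0 & \text{otherwise,}\end{cases} \]
so $W_H K$ is a Poincar\'e duality group; and because it acts properly and cocompactly on the contractible manifold $M^K$ it is also finitely presented. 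A finite-index Shapiro's lemma argument, using that $\ZZ[W_G K]$ is $(\text{co})$induced from $\ZZ[W_H K]$ since $[W_G K : W_H K] < \infty$, then yields an isomorphism $H^i(W_G K, \ZZ[W_G K]) \cong H^i(W_H K, \ZZ[W_H K])$, and both finite presentability and the Poincar\'e duality statement transfer to $W_G K$ with the same constant $n_K = \dim M^K$.

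Cocompactness of $M$ together with Lemma \ref{lemma:OFFPn characterisation} make $H$ of type $\uFP_\infty$, so each $W_H K$ is $\FP_\infty$; the standard finite-index transfer promotes each $W_G K$ to $\FP_\infty$ as well, and Lemma \ref{lemma:OFFPn characterisation} applied to $G$ then gives that $G$ is $\uFP_\infty$. Combined with $\ucd G \le \ugd G < \infty$ from item (3), this yields that $G$ is of type $\uFP$, completing the verification of Bredon--Poincar\'e duality. The main subtle point in this plan is not the finite-index bookkeeping but rather extracting the smoothness (or local linearity) of Block--Weinberger's $H$-action on $M$ from \cite{BlockWeinberger-GeneralizedNielsenRealizationProblem}, since without Lemma \ref{Lemma:proper and locally linear => fixed points are submanifolds} the fixed-point sets $M^K$ need not be submanifolds and the Poincar\'e duality computation for $W_H K$ collapses.
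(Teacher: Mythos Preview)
Your proposal does not address the stated theorem. Theorem~\ref{theorem:block--weinberger} is the Block--Weinberger result itself, and the paper does not prove it: it is simply attributed to ``Theorems 1.5 and 1.8 of \cite{BlockWeinberger-GeneralizedNielsenRealizationProblem}''. What you have actually written is a proof of Theorem~\ref{theorem:BPD groups with no manifold model}, which \emph{uses} Block--Weinberger as a black box input. If the task is genuinely to establish Theorem~\ref{theorem:block--weinberger}, one would have to go into \cite{BlockWeinberger-GeneralizedNielsenRealizationProblem} and extract the construction of the extension and the verification of properties (1)--(4) --- an entirely different undertaking, and one the present paper makes no attempt at.

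Viewed instead as a proof of Theorem~\ref{theorem:BPD groups with no manifold model}, your argument is correct and follows essentially the same route as the paper's: both note that every finite $K \le G$ already lies in $H$, that $N_HK$ has finite index in $N_GK$, and then use Shapiro's lemma \cite[III.(6.5)]{Brown} to transfer $H^i(W_HK,\ZZ[W_HK]) \cong H^i(W_GK,\ZZ[W_GK])$; both use the cocompact model for $\uE H$ together with Lemma~\ref{lemma:OFFPn characterisation} to get $\uFP_\infty$ and finite presentability, and item~(3) to bound $\ucd G$. You are in fact more careful than the paper on one point: you flag that one needs the $H$-action on $M$ to have submanifold fixed-point sets (e.g.\ via local linearity, Lemma~\ref{Lemma:proper and locally linear => fixed points are submanifolds}) in order to know that each $W_HK$ is Poincar\'e duality, whereas the paper's proof simply writes ``the correct cohomology'' and leaves this to the reader and the Block--Weinberger construction.
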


\begin{proof}[Proof of Theorem \ref{theorem:BPD groups with no manifold model}]
Let $G$ be one of the groups constructed by Block and Weinberger in the theorem above.  Since $H$ has a cocompact model for $\uE H$ it has finitely many conjugacy classes of finite subgroups hence $G$ has finitely many conjugacy classes of finite subgroups, since all torsion in $G$ is contained in $H$.  Let $K$ be a finite subgroup of $G$, so $K$ is necessarily a subgroup of $H$ and the normaliser $N_HK$ is finite index in $N_GK$.  Since there is a cocompact model for $\uE H$, the normaliser $N_HK$ is $\FP_\infty$ and finitely presented \cite[Theorem 0.1]{LuckMeintrup-UniversalSpaceGrpActionsCompactIsotropy} hence $N_GK$ and $W_GK$ are $\FP_\infty$ and finitely presented too \cite[VIII.5.1]{Brown}\cite[2.2.5]{Robinson}.  Using Lemma \ref{lemma:OFFPn characterisation}, $G$ is of type $\uFP$.  

Finally, using \cite[III.(6.5)]{Brown}, there is a chain of isomorphisms for all natural numbers $i$,
\begin{align*}
H^i(W_GK, R[W_GK]) &\cong H^i(N_GK, R[N_GK]) \\
&\cong H^i(N_HK, R[N_HK]) \\
&\cong H^i(W_HK, R[W_HK)
\end{align*}
proving that the Weyl groups of finite subgroups have the correct cohomology.
\end{proof}

\begin{Remark}
 Although it doesn't appear in the statements of \cite[Theorems 1.5, 1.8]{BlockWeinberger-GeneralizedNielsenRealizationProblem}, Block and Weinberger do prove that there is a cocompact manifold model for $\uE G$, in their notation this is the space $\widetilde{X}$.
\end{Remark}

\subsection{Actions on \texorpdfstring{$R$}{R}-homology manifolds}\label{subsection:actions on homology manifolds}

Following \cite{DicksLeary-SubgroupsOfCoxeterGroups} we define an \emph{$R$-homology $n$-manifold} to be a locally finite simplicial complex $M$ such that the link $\sigma$ of every $i$-simplex of $M$ satisfies
\[ H_j(\sigma, R) = \left\{ \begin{array}{l l} R & \text{ if } j = n - i - 1 \text{ or } j = 0 \\ 0 & \text{ else.} \end{array} \right. \]
for all $i$ such that $n - i - 1 \ge 0$ and the link is empty if $n - i - 1 < 0$.  In particular $M$ is an $n$-dimensional simplicial complex.  $M$ is called \emph{orientable} if we can choose an orientation for each $n$-simplex which is consistent along the $(n-1)$-simplices and we say that $M$ is \emph{$R$-orientable} if either $M$ is orientable or if $R$ has characteristic $2$.

A topological space $X$ is called $R$-acyclic if the reduced homology $\tilde{H}_*(X, R)$ is trivial.

As for smooth manifolds, we have the following theorem:

\begin{Theorem}\label{theorem:proper actions on Racyclic Rhomology then BPD}
 If $G$ is a group acting properly and cocompactly on an $R$-acyclic $R$-orientable $R$-homology $n$-manifold $M$ then
\[ H^i(G, RG) \cong \left\{ \begin{array}{l l} R & \text{ if } i = n \\ 0 & \text{ else.}\end{array} \right. \]
\end{Theorem}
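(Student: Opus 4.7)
My plan is to identify $H^*(G; RG)$ with the compactly supported cohomology $H^*_c(M; R)$, and then invoke Poincar\'e duality for $R$-orientable $R$-homology manifolds, together with $R$-acyclicity, to compute the latter.

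First I would use the given simplicial structure (or a $G$-CW structure). Since the action is proper and cocompact, the cellular chain complex $C_*(M; R)$ is a chain complex of $RG$-modules with each degree a \emph{finite} direct sum of permutation modules $R[G/G_\sigma]$ whose stabilisers $G_\sigma$ are finite. By $R$-acyclicity, the augmented complex is an (in general non-projective) $RG$-resolution of the trivial module $R$. Moreover, the functor $\Hom_{RG}(-, RG)$ applied to this resolution gives a cochain complex that is naturally isomorphic, as a cochain complex of $R$-modules, to the compactly supported cochain complex $C^*_c(M; R)$: on each orbit $\Hom_{RG}(R[G/G_\sigma], RG) \cong (RG)^{G_\sigma}$ is a free $R$-module on $G_\sigma \backslash G$, matching the free $R$-module of finitely supported functions on the orbit $G/G_\sigma$, and cocompactness ensures the relevant direct sums are finite so this identification is compatible with differentials.

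Second, I would show that the resolution $C_*(M; R) \to R$, though not projective, still computes $\Ext^*_{RG}(R, RG)$. For this it suffices to verify that each permutation module $R[G/G_\sigma]$ is acyclic for $\Hom_{RG}(-, RG)$. By Shapiro's lemma,
\[
\Ext^q_{RG}(R[G/G_\sigma], RG) \cong H^q(G_\sigma;\, \Res^G_{G_\sigma} RG).
\]
Because $G_\sigma$ is finite, $RG$ is a free $RG_\sigma$-module (of possibly infinite rank); because finite groups are of type $\FP_\infty$ over any commutative ring, $H^q(G_\sigma; -)$ commutes with direct sums; and $H^q(G_\sigma; RG_\sigma) = 0$ for $q > 0$ by a second application of Shapiro's lemma to $RG_\sigma = \Ind_1^{G_\sigma} R$. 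A standard hyper-resolution argument (or the two spectral sequences arising from tensoring $C_*(M; R)$ with a projective resolution of $R$) then yields $H^*(G; RG) \cong H^*_c(M; R)$.

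Finally, I would appeal to Poincar\'e duality for $R$-orientable $R$-homology $n$-manifolds to obtain $H^i_c(M; R) \cong H_{n-i}(M; R)$, and use $R$-acyclicity ($\tilde H_*(M; R) = 0$) to conclude the stated computation. I expect the main technical work to be in the second step: establishing the $\Hom_{RG}(-, RG)$-acyclicity of the permutation modules $R[G/G_\sigma]$ cleanly enough that the non-projective resolution $C_*(M; R)$ computes $\Ext^*_{RG}(R, RG)$; the first and third steps then follow from bookkeeping and classical duality for homology manifolds.
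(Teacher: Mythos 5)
Your proof is correct and follows essentially the same route as the paper's: identify $H^*(G, RG)$ with $H^*_c(M, R)$, then apply Poincar\'e duality for $R$-orientable $R$-homology manifolds and $R$-acyclicity of $M$. The only difference is that the paper simply cites Lemma F.2.2 of Davis's book for the identification with compactly supported cohomology, whereas you reprove it from scratch via the cellular chain complex and the $\Hom_{RG}(-,RG)$-acyclicity of the permutation modules $R[G/G_\sigma]$.
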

\begin{proof}
 By \cite[Lemma F.2.2]{Davis} $ H^i(G, RG) \cong H^i_c(M, R) $, where $H^i_c$ denotes cohomology with compact supports.  By Poincar\'e duality for $R$-orientable $R$-homology manifolds (see for example \cite[Theorem 5]{DicksLeary-SubgroupsOfCoxeterGroups}), there is a duality isomorphism $H^i_c(M, R) \cong H_{n-i}(M, R) $.  Finally, since $M$ is assumed acyclic,
\[ H_{n-i}(M, R) \cong \left\{ \begin{array}{l l} R & \text{ if } i = n \\ 0 & \text{ else.}\end{array} \right.\]
\end{proof}

\begin{Example}\label{example:DicksLeary example PD over R not Z}
In \cite[Example 3]{DicksLeary-SubgroupsOfCoxeterGroups}, Dicks and Leary construct a group which is Poincar\'e duality over $R$, arising from an action on an $R$-orientable $R$-acyclic $R$-homology manifold, but which is not Poincar\'e duality over $\ZZ$.  Here $R$ may be any ring for which a fixed prime $q$ is invertible, for example $R = \FF_p$ for $p \neq q$ or $R = \QQ$.
\end{Example}

\begin{Cor}\label{cor:actions on Rorient Rhom manifolds gives BPD}
 If $G$ is a group which admits a cocompact model $X$ for $\uE G$ such that for every finite subgroup $H$ of $G$, $X^H$ is an $R$-orientable $R$-homology manifold.  Then $G$ is Bredon--Poincar\'e duality over $R$.
\end{Cor}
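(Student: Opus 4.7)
The strategy is to reduce the corollary to Theorem \ref{theorem:proper actions on Racyclic Rhomology then BPD} applied to each Weyl group $WH$ acting on the fixed-point subspace $X^H$. First I would verify that $G$ is of type $\uFP$ over $R$: since $X$ is a cocompact model for $\uE G$, $G$ has only finitely many conjugacy classes of finite subgroups, each Weyl group $WH$ acts cocompactly and properly on $X^H$ with $X^H$ contractible, so $WH$ is $\FP_\infty$ and hence by Lemma \ref{lemma:OFFPn characterisation} $G$ is $\uFP_\infty$; the bound $\ucd G \le \dim X < \infty$ then gives $\uFP$.

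Next, for each finite $H \le G$, I would interpret $X^H$ as a cocompact model for $\uE WH$. The standard facts needed here are that $X^H$ is contractible (since $X$ is $\uE G$), that the $N_G H$-action on $X^H$ descends to a $WH$-action with finite stabilisers (any stabiliser of $x \in X^H$ in $WH$ is a quotient of the finite group $N_G H \cap G_x$), and that cocompactness of the $G$-action on $X$ forces cocompactness of the $WH$-action on $X^H$. Combined with the hypothesis that $X^H$ is an $R$-orientable $R$-homology manifold, and with the fact that contractibility implies $R$-acyclicity, all hypotheses of Theorem \ref{theorem:proper actions on Racyclic Rhomology then BPD} are met for $WH \curvearrowright X^H$.

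Applying that theorem yields
\[
H^i(WH, R[WH]) \cong \begin{cases} R & \text{if } i = \dim X^H, \\ 0 & \text{else,} \end{cases}
\]
so setting $n_H := \dim X^H$ gives exactly the Bredon--Poincar\'e duality condition. Together with the $\uFP$ property established in the first step, this shows $G$ is Bredon--Poincar\'e duality over $R$.

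I expect no genuine obstacle: each ingredient is either hypothesised or a routine fact about fixed-point sets of actions on classifying spaces for proper actions. The only mild care required is in justifying that $WH$ acts properly and cocompactly on $X^H$; this is where the standard identification of $X^H$ as $\uE WH$ is invoked, and then Theorem \ref{theorem:proper actions on Racyclic Rhomology then BPD} does all the remaining work.
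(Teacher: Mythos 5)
The paper states this corollary without proof, as an immediate consequence of Theorem \ref{theorem:proper actions on Racyclic Rhomology then BPD}, and your argument supplies exactly the intended reasoning: observe that for each finite $H$ the fixed set $X^H$ is a cocompact model for $\uE WH$, hence contractible (so $R$-acyclic) and acted on properly and cocompactly by $WH$, and then apply the theorem to $WH \curvearrowright X^H$ to get $H^i(WH,R[WH]) \cong R$ concentrated in degree $n_H = \dim X^H$. Your finiteness step could be stated even more directly --- a cocompact model for $\uE G$ gives $\uFP$ over $\ZZ$ via its cellular Bredon chain complex (cf.\ \cite[Theorem 0.1]{LuckMeintrup-UniversalSpaceGrpActionsCompactIsotropy}), and then over $R$ by Lemma \ref{lemma:uFPn over Z then over R} --- but the route through the Weyl groups and Lemma \ref{lemma:OFFPn characterisation} is equally valid. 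No gaps.
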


\begin{Remark}
 In the case $R = \ZZ$ we can drop the condition that $M$ be orientable since this is implied by being acyclic.  This is because if $M$ is acyclic then $\pi_1(M)$ is perfect, thus $\pi_1(M)$ has no normal subgroups of prime index, in particular $M$ has no index $2$ subgroups.  But if $M$ were non-orientable then the existence of an orientable double cover (see for example \cite[p.234]{Hatcher}) would imply that $\pi_1(M)$ has a subgroup of index $2$.
\end{Remark}

Let $p$ be a prime and $\FF_p$ the field of $p$ elements.  A consequence of Smith theory \cite[III]{Bredon-IntroductionToCompactTransformationGroups} is the following theorem.

\begin{Theorem}\label{thm:smith}
 If $G$ is a finite $p$-group acting properly on an $\FF_p$-homology manifold $M$ then the fixed point set $M^G$ is also an $\FF_p$-homology manifold.  If $p \neq 2$ then $M^G$ has even codimension in $M$.
\end{Theorem}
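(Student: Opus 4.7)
The plan is to reduce to $|G| = p$ by induction on $|G|$, then apply classical Smith theory locally on links. Any nontrivial finite $p$-group has a nontrivial center containing a subgroup $K$ of order $p$; since $K$ is normal, $G/K$ acts on $M^K$ with $M^G = (M^K)^{G/K}$. Induction on $|G|$ together with the base case $G = \ZZ/p$ then yields both the homology manifold conclusion and, for $p$ odd, the even codimension statement (codimensions add across the two stages).

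For the base case $G = \ZZ/p$, I would first pass to an iterated barycentric subdivision of $M$ so that $G$ acts simplicially with any setwise-fixed simplex fixed pointwise; then $M^G$ is a subcomplex of $M$, and for every simplex $\sigma$ of $M^G$ there is an identification of links
\[ \mathrm{lk}_{M^G}(\sigma) = \mathrm{lk}_M(\sigma)^G. \]
Since $M$ is an $\FF_p$-homology $n$-manifold, $\mathrm{lk}_M(\sigma)$ is an $\FF_p$-homology $(n-\dim\sigma-1)$-sphere. The decisive Smith-theoretic input, recorded in \cite[III]{Bredon-IntroductionToCompactTransformationGroups}, asserts that a $\ZZ/p$-action on an $\FF_p$-homology $m$-sphere has fixed set an $\FF_p$-homology $s$-sphere for some $-1 \le s \le m$, with $m - s$ even when $p$ is odd. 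Applied to $\mathrm{lk}_M(\sigma)$, this immediately shows that every link in $M^G$ has the homology of a sphere.

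It remains to see that the local sphere dimensions fit together into a single integer $r$, making $M^G$ an $\FF_p$-homology $r$-manifold on each connected component. Starting from a vertex $v$ and using Smith's formula to read off a local dimension $r_v$, a standard connectedness argument propagating across adjacent simplices forces the dimension to be constant on each component and to interact with $\dim\sigma$ in the expected way. The parity statement for odd $p$ then follows from the parity clause of Smith's theorem applied at the vertex level.

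The main obstacle is Smith's theorem itself, which I would invoke as a black box; once it is in hand, the passage to $M^G$ via the simplicial link identity is straightforward, modulo the technical preliminary of arranging a $G$-invariant simplicial structure on $M$.
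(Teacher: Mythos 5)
The paper itself offers no proof of this theorem: it is stated as a black-box consequence of Smith theory with a citation to Bredon \cite[III]{Bredon-IntroductionToCompactTransformationGroups}, so there is no in-paper argument to compare against. Your outline is the standard one and is essentially sound: the reduction to $\lvert G\rvert = p$ via a central order-$p$ subgroup, the passage to the second barycentric subdivision so that $M^G$ is a full subcomplex with $\mathrm{lk}_{M^G}(\sigma) = \mathrm{lk}_M(\sigma)^G$, and the application of Smith's sphere theorem to each link are exactly the right ingredients, and they match the simplicial flavour of the paper's definition of $R$-homology manifold better than Bredon's \v{C}ech-local-cohomology formulation does.

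The one place you are too quick is the ``local sphere dimensions fit together'' step. Smith's theorem applied to $\mathrm{lk}_M(\sigma)$ only tells you that $\mathrm{lk}_M(\sigma)^G$ has the $\FF_p$-\emph{homology} of a sphere; it does not by itself make that fixed set a simplicial complex of well-defined dimension, nor does it relate the ``Smith dimension'' at $\sigma$ to the one at a face or coface. The propagation argument you allude to needs the links of $M^G$ to themselves be $\FF_p$-homology manifolds, so that one can use the identity $\mathrm{lk}_{M^G}(\sigma * w) = \mathrm{lk}_{\mathrm{lk}_{M^G}(\sigma)}(w)$ and the fact that links of vertices in a homology $s$-sphere are homology $(s-1)$-spheres. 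This is precisely the statement of the theorem one dimension down, applied to the $G$-action on the $\FF_p$-homology $(n-\dim\sigma-1)$-manifold $\mathrm{lk}_M(\sigma)$. So your proof should be organised as a double induction --- outer on $\lvert G\rvert$ as you describe, inner on $\dim M$ --- with the inner induction hypothesis invoked at every link before the connectedness propagation is run. With that made explicit the argument is complete; as written, the ``standard connectedness argument'' quietly presupposes the inner induction rather than supplying it.
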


\begin{Cor}[Actions on homology manifolds]\label{cor:actions on hom manifolds}~
\begin{enumerate}
 \item Let $G$ have an $n$-dimensional $\FF_p$ homology manifold model $M$ for $\uE G$.  If $H$ is a finite $p$-subgroup of $G$ then $M^H$ is an $\FF_p$-homology manifold.  In particular if all finite subgroups of $G$ are $p$-groups then $G$ is Bredon--Poincar\'e duality.  If $ p \neq 2$ and $H$ is a finite $p$-subgroup of $G$ then $n - n_H$ is even.
 \item Let $G$ have an $n$-dimensional $\ZZ$-homology manifold model $M$ for $\uE G$.  If $p \neq 2$ is a prime and $H$ is a finite $p$-subgroup of $G$ such that $M^H$ is a $\ZZ$-homology manifold then $n - n_H$ is even.
\end{enumerate}
\end{Cor}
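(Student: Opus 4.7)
The plan is to combine Smith theory (Theorem~\ref{thm:smith}) with Theorem~\ref{theorem:proper actions on Racyclic Rhomology then BPD}, applied to the action of each Weyl group on the corresponding fixed point set of $M$.

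For part~(1), I would first apply Theorem~\ref{thm:smith}: since $M$ is an $\FF_p$-homology $n$-manifold and $H$ is a finite $p$-group acting properly, $M^H$ is an $\FF_p$-homology manifold of some dimension $d_H$, and moreover $n - d_H$ is even when $p \neq 2$. Because $M$ is a model for $\uE G$, the fixed point set $M^H$ is contractible; in particular it is $\FF_p$-acyclic, and simply connected, hence $\FF_p$-orientable (trivially so when $p = 2$). A standard check shows $WH = N_GH/H$ acts properly and cocompactly on $M^H$: properness is inherited from the $G$-action on $M$, and cocompactness from the fact that the $H$-fixed cells of a cocompact $G$-CW structure on $M$ form a finite union of $N_GH$-orbits. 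Moreover, since $G$ has a cocompact model for $\uE G$, $G$ is of type $\uFP$. Applying Theorem~\ref{theorem:proper actions on Racyclic Rhomology then BPD} to the $WH$-action on $M^H$ over $\FF_p$ yields
\[H^i(WH, \FF_p[WH]) \cong \begin{cases} \FF_p & i = d_H, \\ 0 & \text{otherwise,} \end{cases}\]
so $n_H = d_H$. Specialising to the case where every finite subgroup of $G$ is a $p$-group gives the Bredon--Poincar\'e duality statement, and the parity conclusion for $p \neq 2$ is then immediate.

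For part~(2), the argument is parallel. A $\ZZ$-homology $n$-manifold is automatically an $\FF_p$-homology $n$-manifold, since the link conditions in the definition pass to $\FF_p$-coefficients via the universal coefficient theorem. Smith theory therefore gives that $M^H$ has even $\FF_p$-codimension in $M$ whenever $p \neq 2$. Under the hypothesis that $M^H$ is actually a $\ZZ$-homology manifold, its dimension as a $\ZZ$-homology manifold coincides with its dimension as an $\FF_p$-homology manifold, and so $n - \dim M^H$ is even. Since $M^H$ is contractible it is $\ZZ$-acyclic and, by the remark preceding the corollary, $\ZZ$-orientable, and $WH$ acts on it properly and cocompactly; applying Theorem~\ref{theorem:proper actions on Racyclic Rhomology then BPD} over $\ZZ$ gives $n_H = \dim M^H$, whence $n - n_H$ is even.

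The main obstacle, such as it is, lies in matching the geometric dimension of $M^H$ as a homology manifold with the algebraic invariant $n_H$; this is exactly what Theorem~\ref{theorem:proper actions on Racyclic Rhomology then BPD} provides, given contractibility and orientability of $M^H$. Once Smith theory supplies the parity and homology manifold structure of $M^H$, and Theorem~\ref{theorem:proper actions on Racyclic Rhomology then BPD} converts this into the cohomology of $WH$, both parts of the corollary assemble from pre-existing machinery with no essentially new input required.
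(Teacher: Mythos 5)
Your proof is correct and takes the approach the paper clearly intends (the corollary carries no explicit proof in the paper, but it is manifestly assembled from Theorem~\ref{thm:smith} together with Theorem~\ref{theorem:proper actions on Racyclic Rhomology then BPD}, via Corollary~\ref{cor:actions on Rorient Rhom manifolds gives BPD}, exactly as you do). You supply the supporting details the paper leaves implicit—$\FF_p$-acyclicity and orientability of $M^H$ from contractibility, inheritance of proper cocompactness by the $WH$-action on $M^H$, and for part~(2) the passage from $\ZZ$- to $\FF_p$-homology manifolds via the universal coefficient theorem so Smith theory applies—and these are all sound.
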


\begin{Remark}
 Given a group $G$ with subgroup $H$ which is not of prime power order, looking at the Sylow $p$-subgroups can give further restrictions.  For example if $P_i$ for $i \in I$ is a set of Sylow $p$-subgroups of $H$, one for each prime $P$, then by \cite[Ex. 4.10]{Rotman-Groups} $G$ is generated by the $P_i$.  Thus if $G$ acts on an $R$-homology manifold then the fixed points of $H$ are exactly the intersection of the fixed points of the $P_i$.  
\end{Remark}

\subsection{One Relator Groups}

The following lemma is adapted from \cite[5.2]{BieriEckmann-HomologicalDualityGeneralizingPoincareDuality}.
\begin{Lemma}\label{lemma:tech lemma for one-relator}
 If $G$ is $\FP_2$ with $\cd G = 2$ and $H^1(G, \ZZ G) = 0$ then $G$ is duality.
\end{Lemma}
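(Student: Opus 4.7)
The plan is to verify the three conditions defining a duality group of dimension $2$: that $G$ is of type $\FP$, that $H^i(G, \ZZ G) = 0$ for $i \ne 2$, and that $H^2(G, \ZZ G)$ is $\ZZ$-flat.

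First, $G$ is of type $\FP$: starting from a partial projective resolution $P_2 \to P_1 \to P_0 \to \ZZ \to 0$ by finitely generated projectives (provided by the $\FP_2$ hypothesis), the kernel $K = \ker(P_1 \to P_0)$ is projective since $\cd G \le 2$. The surjection $P_2 \twoheadrightarrow K$ then splits, making $K$ a finitely generated direct summand of $P_2$, so replacing $P_2$ by $K$ yields a finite-type length-$2$ projective resolution. Next I check the vanishing of $H^i(G, \ZZ G)$ for $i \ne 2$: $\cd G = 2$ gives $H^i(G, \ZZ G) = 0$ for $i \ge 3$; since $\cd G = 2 > 0$ forces $G$ to be infinite (nontrivial finite groups have $\cd_\ZZ = \infty$), we get $H^0(G, \ZZ G) = (\ZZ G)^G = 0$; and $H^1(G, \ZZ G) = 0$ is the hypothesis.

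The remaining step is the $\ZZ$-torsion-freeness of $H^2(G, \ZZ G)$. For each prime $p$, apply the long exact sequence in cohomology to the short exact sequence of coefficient modules $0 \to \ZZ G \xrightarrow{\cdot p} \ZZ G \to \FF_p G \to 0$. Using $H^1(G, \ZZ G) = 0$ and $H^0(G, \FF_p G) = 0$ (by the same argument as for $H^0(G, \ZZ G)$), I obtain an isomorphism between $H^1(G, \FF_p G)$ and the kernel of multiplication by $p$ on $H^2(G, \ZZ G)$. It thus suffices to show $H^1(G, \FF_p G) = 0$ for every prime $p$.

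Since $G$ is $\FP_2$ it is finitely generated, so the classical theory of ends applies. For any field $F$ one has $\dim_F H^1(G, FG) = e(G) - 1$, and similarly $\text{rank}_\ZZ H^1(G, \ZZ G) = e(G) - 1$, because the number of ends is a geometric invariant of the Cayley graph independent of the coefficient ring. Thus $H^1(G, \ZZ G) = 0$ forces $e(G) = 1$ and hence $H^1(G, \FF_p G) = 0$ for every prime $p$, completing the proof. The main obstacle is precisely this last step: extracting $H^1(G, \FF_p G) = 0$ from $H^1(G, \ZZ G) = 0$ rests on the coefficient-independence of the number of ends of a finitely generated group; the rest of the argument is formal manipulation of long exact sequences and the definitions of $\FP$ and $\cd$.
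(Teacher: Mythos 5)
Your proof is correct, but the decisive step is handled differently from the paper. Both arguments agree up to the point of reducing everything to the vanishing of $H^1(G, \FF_p G)$ via the long exact sequence attached to $0 \to \ZZ G \xrightarrow{p} \ZZ G \to \FF_p G \to 0$. The paper then appeals to the universal-coefficients-type sequence \cite[Corollary 3.6]{Bieri-HomDimOfDiscreteGroups} to assert $H^1(G,\FF_p G)\cong H^1(G,\ZZ G)\otimes_\ZZ\FF_p = 0$; as the paper's own use of the same corollary in Lemma \ref{Lemma:observations} makes explicit, that sequence carries a $\Tor_1^\ZZ(H^2(G,\ZZ G),\FF_p)$ term which must be suppressed, and that term is precisely the object one is trying to show vanishes. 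You instead go through the theory of ends: $\FP_2$ gives finite generation, $H^1(G,\ZZ G)=0$ together with $G$ infinite forces $e(G)=1$, and then $H^1(G,FG)=0$ for any field $F$ because the number of ends is a coefficient-independent invariant of the Cayley graph (with $H^1(G,RG)$ free of rank $e(G)-1$ for $e(G)$ finite). This cleanly sidesteps any worry about the $\Tor$ term. You also spell out the parts the paper leaves implicit, namely that $\FP_2$ plus $\cd G=2$ yields $\FP$, and that $H^0(G,\ZZ G)=0$ since $G$ is infinite; these are correct and complete the verification of the duality condition.
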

\begin{proof}
We must show that $H^2(G, \ZZ G)$ is a flat $\ZZ$-module.  Consider the short exact sequence of $\ZZ G$ modules
\[0 \longrightarrow \ZZ G \stackrel{\times p}{\longrightarrow} \ZZ G \longrightarrow \FF_pG \longrightarrow 0\]
This yields a long exact sequence
\[\cdots \longrightarrow H^1(G, \FF_p G) \longrightarrow H^2(G, \ZZ G) \stackrel{\times p}{\longrightarrow} H^2(G, \ZZ G) \longrightarrow \cdots  \]
By \cite[Corollary 3.6]{Bieri-HomDimOfDiscreteGroups}, $H^1(G, \FF_p G) \cong  H^1(G, \ZZ G) \otimes_\ZZ \FF_p = 0$.  Hence the map $H^2(G, \ZZ G) \stackrel{\times p}{\longrightarrow} H^2(G, \ZZ G)$ must have zero kernel for all $p$, in other words $H^2(G, \ZZ G)$ is torsion-free.  But the torsion-free $\ZZ$-modules are exactly the flat $\ZZ$-modules.
\end{proof}

Let $G$ be a one-relator group (see \cite[\S 5]{LyndonSchupp} for background on these groups), then:
\begin{enumerate}
 \item $G$ is $\uFP$ and $\ucd_\ZZ G = 2$ \cite[4.12]{Luck-SurveyOnClassifyingSpaces}.
 \item $G$ contains a torsion-free subgroup $Q$ of finite index \cite{FischerKarrassSolitar-OneRelatorGroupsHavingElementsOfFiniteOrder}.
\end{enumerate}
If $\cd_{\ZZ} Q \le 1$ then $Q$ is either finite or a finitely generated free group so $G$ is either finite or virtually finitely generated-free.  Thus $G$ is Bredon duality over $\ZZ$ by \ref{lemma:duality D0}, \ref{remark:duality uD1}, and \ref{prop:duality uD1 Rtf}.  Assume therefore that $\cd_{\ZZ} Q = 2$.  Being finite index in $G$, $Q$ is also $\FP_2$ and $H^1(Q, \ZZ Q) = H^1(G, \ZZ G) = 0$ \cite[III.(6.5)]{Brown}, thus by Lemma \ref{lemma:tech lemma for one-relator} $Q$ is a duality group and $G$ is virtual duality.  

Every finite subgroup of $G$ is subconjugated to a finite cyclic self-normalising subgroup $C$ of $G$ \cite[5.17,5.19]{LyndonSchupp}, and furthermore the normaliser of any finite subgroup is subconjugate to $C$---if $K$ is a non-trivial subgroup of $C$ and $n \in N_G K$ then $n^{-1}C n \cap C \neq 1$ and \cite[5.19]{LyndonSchupp} implies that $n \in C$.  For an arbitrary non-trivial finite subgroup $K^\prime$, since $K^\prime$ is conjugate to some $K \le C$, the normaliser $N_GK^\prime$ is conjugate to $N_GK \le C$.

Since the normaliser of any non-trivial finite subgroup $F$ is finite, 
\[H^i(N_GF, \ZZ [N_G F]) = \left\{\begin{array}{l l}0 & \text{ if $i > 0$,} \\ \ZZ & \text{ if $i = 0$.} \end{array} \right.\]
Hence $G$ is Bredon duality of dimension $2$.  In summary:
 
\begin{Prop}
 If $G$ is a one relator group with $H^1(G, \ZZ G) = 0$ then $G$ is Bredon duality over any ring $R$.
\end{Prop}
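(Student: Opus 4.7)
The plan is to verify each ingredient of the Bredon duality condition for $G$ separately---namely the $\uFP$ condition, the duality condition at non-trivial finite subgroups, and the duality condition at the trivial subgroup---and then invoke Lemma \ref{lemma:uDn over Z then uDn over R} to promote the result from $\ZZ$ to an arbitrary coefficient ring $R$.

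First I would record that by \cite[4.12]{Luck-SurveyOnClassifyingSpaces}, $G$ is $\uFP$ with $\ucd_\ZZ G = 2$, so the finiteness requirement is already in place. Next I would pin down the finite subgroup structure using \cite[5.17, 5.19]{LyndonSchupp}: every finite subgroup is subconjugate to a finite cyclic self-normalising subgroup $C$, and a short argument (if $K \le C$ is non-trivial and $n \in N_G K$, then $n^{-1} C n \cap C \ne 1$, forcing $n \in C$ by \cite[5.19]{LyndonSchupp}) shows that the normaliser of any non-trivial finite subgroup is itself subconjugate to $C$ and in particular finite. Consequently, for every non-trivial finite $F \le G$,
\[
H^{i}(N_G F, \ZZ[N_G F]) \cong \begin{cases} \ZZ & i = 0,\\ 0 & i > 0,\end{cases}
\]
so the duality condition is satisfied at $F$ with $n_F = 0$.

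The main step is to show $G$ itself is a duality group over $\ZZ$ of dimension $2$. By \cite{FischerKarrassSolitar-OneRelatorGroupsHavingElementsOfFiniteOrder}, $G$ contains a torsion-free subgroup $Q$ of finite index. If $\cd_\ZZ Q \le 1$ then $Q$ is finite or finitely generated free, whence $G$ is virtually free and the duality condition at the trivial subgroup follows from the low-dimensional results referenced as \ref{lemma:duality D0}, \ref{remark:duality uD1}, and \ref{prop:duality uD1 Rtf}. Otherwise $\cd_\ZZ Q = 2$; being finite index, $Q$ is $\FP_2$ and $H^{1}(Q, \ZZ Q) \cong H^{1}(G, \ZZ G) = 0$ by \cite[III.(6.5)]{Brown}, so Lemma \ref{lemma:tech lemma for one-relator} makes $Q$ a duality group. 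Then $G$ is virtually duality and hence duality in its own right.

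The subtle step I expect to be the main obstacle is the normaliser analysis: one must be careful to argue that the self-normalising property of $C$ really propagates to arbitrary non-trivial finite subgroups, so that the appropriate Weyl groups are \emph{finite} rather than merely having vanishing higher cohomology. Once the three computations are assembled, $G$ is Bredon duality over $\ZZ$ of dimension $2$, and Lemma \ref{lemma:uDn over Z then uDn over R} upgrades this to Bredon duality over any commutative ring $R$.
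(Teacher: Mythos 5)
Your proposal matches the paper's argument almost line for line: the same use of \cite[4.12]{Luck-SurveyOnClassifyingSpaces} for $\uFP$ and $\ucd_\ZZ G = 2$, the same normaliser analysis via the self-normalising cyclic subgroup $C$ from \cite[5.17,5.19]{LyndonSchupp}, the same case split on $\cd_\ZZ Q$ for the torsion-free finite-index subgroup $Q$ with Lemma \ref{lemma:tech lemma for one-relator} applied in the $\cd_\ZZ Q = 2$ case, and Lemma \ref{lemma:uDn over Z then uDn over R} to pass from $\ZZ$ to arbitrary $R$. The only nitpick is the phrase ``$G$ is virtually duality and hence duality in its own right'' --- since $G$ may have torsion it is not a duality group in the classical sense, and what you actually need (and what the paper implicitly uses) is that $H^i(G,\ZZ G)\cong H^i(Q,\ZZ Q)$ by \cite[III.(6.5)]{Brown}, which delivers the cohomology condition at the trivial subgroup.
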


\begin{Remark}
If $G$ is a one relator group with $H^1(G, \ZZ G) \neq 0$ then, since $G$ is $\uFP_0$, $G$ has bounded orders of finite subgroups by Lemma \ref{lemma:OFFPn characterisation}.  By a result of Linnell, $G$ admits a decomposition as the fundamental group of a finite graph of groups with finite edge groups and vertex groups $G_v$ satisfying $H^1(G, \ZZ G) = 0$ \cite{Linnell-OnAccessibilityOfGroups}.  These vertex groups are subgroups of virtually torsion-free groups so in particular virtually torsion-free with $\ucd_\ZZ G \le 2$.  Lemma \ref{lemma:duality FP2 split is FP2} below gives that the vertex groups are $\FP_2$ and Lemma \ref{lemma:tech lemma for one-relator} shows that the edge groups are virtually duality.  
\end{Remark}

\begin{Lemma}\label{lemma:duality FP2 split is FP2}
 Let $G$ be a group which splits as a finite graph of groups with finite edge groups $G_e$, indexed by $E$, and vertex groups $G_v$, indexed by $V$.  Then if $G$ is $\FP_2$, so are the vertex groups $G_v$.
\end{Lemma}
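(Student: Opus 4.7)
The plan is to use Bass-Serre theory to convert the question about $G$ and its vertex groups into a homological-algebra statement about a single short exact sequence of $\ZZ G$-modules. The splitting of $G$ as a finite graph of groups yields an action of $G$ on the associated Bass-Serre tree $T$, with vertex stabilisers conjugate to the $G_v$ and edge stabilisers conjugate to the $G_e$. Because $T$ is contractible, augmenting its cellular chain complex gives the short exact sequence of $\ZZ G$-modules
\[0 \longrightarrow \bigoplus_{e \in E} \ZZ[G/G_e] \longrightarrow \bigoplus_{v \in V} \ZZ[G/G_v] \longrightarrow \ZZ \longrightarrow 0.\]

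The key observations are then as follows. First, since each edge group $G_e$ is finite, $\ZZ G_e$ is a Noetherian ring and $G_e$ is $\FP_\infty$ over $\ZZ$; equivalently $\ZZ[G/G_e]$ is $\FP_\infty$ as a $\ZZ G$-module by the standard translation between finiteness properties of a subgroup and finiteness properties of its coset module. Being a finite direct sum of such modules, $\bigoplus_e \ZZ[G/G_e]$ is $\FP_\infty$ as a $\ZZ G$-module. Second, the hypothesis that $G$ is $\FP_2$ says exactly that the trivial module $\ZZ$ is $\FP_2$ over $\ZZ G$. The standard two-out-of-three lemma for $\FP_n$ in short exact sequences (if $A$ and $C$ are $\FP_n$, so is $B$) then gives that $\bigoplus_v \ZZ[G/G_v]$ is $\FP_2$ as a $\ZZ G$-module.

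To finish I would invoke that $\FP_n$ is closed under taking finitely generated direct summands, which follows from the characterisation of $\FP_n$ via $\Ext$ commuting with arbitrary direct products of modules. Each $\ZZ[G/G_v]$ is cyclic and hence finitely generated, so each is $\FP_2$ as a $\ZZ G$-module, and the same subgroup/coset-module correspondence used at the start translates this back into the assertion that each vertex group $G_v$ is $\FP_2$. The argument is entirely standard once the Bass-Serre tree is brought into play; the only point requiring some care is the direct-summand step, where a naive argument chasing finitely generated projective resolutions does not suffice and one must appeal to the product characterisation of $\FP_n$.
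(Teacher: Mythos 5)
Your argument is correct, and it takes a genuinely different route from the paper's. The paper works entirely at the level of the Bieri--Eckmann criterion: fixing a vertex group $G_v$ and a directed system $M_\lambda$ of $\ZZ G_v$-modules with zero colimit, it plugs the induced modules $\Ind_{\ZZ G_v}^{\ZZ G} M_\lambda$ into the cohomological Mayer--Vietoris sequence for the graph of groups, passes to the limit to kill the $H^i(G,-)$ and $H^i(G_e,-)$ terms (using that $G$ is $\FP_2$ and each $G_e$ is finite), and then recovers vanishing of $\varinjlim H^i(G_v, M_\lambda)$ from the fact that $M_\lambda$ is a $\ZZ G_v$-direct summand of $\Res\Ind M_\lambda$. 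You instead stay at the level of modules and the $\FP_n$ property itself: the cellular chain complex of the Bass--Serre tree gives the short exact sequence of $\ZZ G$-modules, the two-out-of-three lemma (Bieri, Prop.\ 1.4(c)) pushes $\FP_2$ from the kernel and cokernel to the middle term, and you split off each $\ZZ[G/G_v]$ as a direct summand before translating back via the subgroup/coset-module correspondence (Brown VIII.5.1). Both proofs are ultimately powered by the Bass--Serre tree; yours avoids passing through cohomology and limits, at the price of needing that a direct summand of an $\FP_n$ module is $\FP_n$. Your caution about that step is reasonable, though it is not quite as delicate as you suggest: besides the product/colimit characterisation you cite, there is also a short direct induction using the generalised Schanuel lemma (if $M\oplus N$ is $\FP_n$, lift a finitely generated projective cover of each summand, observe the kernel of the sum is $\FP_{n-1}$ and is itself a direct sum, and induct). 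Either way the step is sound, and your proof is a clean alternative to the one in the paper.
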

\begin{proof}
 Fix a vertex group $G_v$.  Let $M_\lambda$, for $\lambda \in \Lambda$, be a directed system of $\ZZ G_v$ modules with $\varinjlim M_\lambda = 0$.  To use the Bieri-Eckmann criterion \cite[Theorem 1.3]{Bieri-HomDimOfDiscreteGroups}, we must show that $\varinjlim H^i(G_v, M_\lambda) = 0$ for $i = 1,2$.
 
 The Mayer-Vietoris sequence associated to the graph of groups is 
 \[ \cdots \longrightarrow H^i(G, -) \longrightarrow \bigoplus_{v \in V} H^i(G_v, -) \longrightarrow \bigoplus_{e \in E} H^i(G_e, -) \longrightarrow \cdots \]
 Now $\varinjlim M_\lambda = 0$, so $\varinjlim \Ind_{\ZZ G_v}^{\ZZ G} M_\lambda = 0$ as well.  Evaluating the Mayer-Vietoris sequence at $\Ind_{\ZZ G_v}^{\ZZ G} M_\lambda$, taking the limit, and using the Bieri-Eckmann criterion, implies 
 \[\varinjlim_{\Lambda} \bigoplus_{v \in V} H^i(G_v, \Ind_{\ZZ G_v}^{\ZZ G} M_\lambda ) = 0\]
In particular $\varinjlim H^i(G_v, \Ind_{\ZZ G_v}^{\ZZ G} M_\lambda ) = 0$ and because, as $\ZZ G_v$-module, $M_\lambda$ is a direct summand of $\Ind_{\ZZ G_v}^{\ZZ G} M_\lambda$ \cite[VII.5.1]{Brown}, this implies $\varinjlim H^i(G_v, M_\lambda ) = 0$.  
\end{proof}

\subsection{Discrete Subgroups of Lie Groups}
If $L$ is a Lie group with finitely many path components, $K$ a maximal compact subgroup and $G$ a discrete subgroup then $L/K$ is a model for $\uE G$.  The space $L/K$ is a manifold and the action of $G$ on $L/K$ is smooth so the fixed point subsets of finite groups are submanifolds of $L/K$, using Lemma \ref{Lemma:proper and locally linear => fixed points are submanifolds}.  If we assume that the action is cocompact then $G$ is seen to be of type $\uFP$, $\ucd G = \dim L/K$ and $G$ is a Bredon--Poincar\'e duality group.  See \cite[Theorem 5.24]{Luck-SurveyOnClassifyingSpaces} for a statement of these results.

\begin{Example}\label{example:disc sub of lie not vtf}
In \cite{Raghunathan-TorsionInCoCompactLatticesInSpin}\cite{Raghunathan-CorrigendumTorsionInCoCompactLatticesInSpin}, examples of cocompact lattices in finite covers of the Lie group $\text{Spin}(2,n)$ are given which are not virtually torsion-free.  
\end{Example}

\subsection{Virtually Soluble Groups}
The observation below appears in \cite[Example 5.6]{MartinezPerez-EulerClassesAndBredonForRestrictedFamilies}.

If $G$ is virtually soluble duality group $G$ then $G$ is $\uFP$ and $\ucd G = hG$, where $hG$ denotes the Hirsch length of $G$ \cite{MartinezPerezNucinkis-VirtuallySolubleGroupsOfTypeFPinfty}.  We claim $G$ is also Bredon duality, so we must check the cohomology condition on the Weyl groups.  Since $G$ is $\uFP$, the normalisers $N_G F$ of any finite subgroup $F$ of $G$ are $\FP_\infty$ (Lemma \ref{lemma:OFFPn characterisation}). Subgroups of virtually-soluble groups are virtually-soluble \cite[5.1.1]{Robinson}, so the normalisers $N_GF$ are virtually-soluble $\FP_\infty$ and hence virtually duality \cite{Kropholler-CohomologicalDimensionOfSolubleGroups}, and so the Weyl groups satisfy the required condition on cohomology.

Additionally, if $G$ is a virtually soluble Poincar\'e duality group then we claim $G$ is Bredon--Poincar\'e duality.  By \cite[Theorem 9.23]{Bieri-HomDimOfDiscreteGroups}, $G$ is virtually-polycyclic .  Subgroups of virtually-polycyclic groups are virtually-polycyclic \cite[p.52]{Robinson}, so $N_GF$ is polycyclic $\FP_\infty$ for all finite subgroups $F$ and, since polycylic groups are Poincar\'e duality,
\[H^{n_F}(N_G F, \ZZ [N_G F]) = \ZZ\]

\begin{Prop}\label{prop:bredon equivalent to soluble virtual duality}The following conditions on a virtually-soluble group $G$ are equivalent:
\begin{enumerate}
 \item $G$ is $\FP_\infty$.
 \item $G$ is virtually duality.
 \item $G$ is virtually torsion-free and $\vcd G = hG < \infty$.
 \item $G$ is Bredon duality.
\end{enumerate}
Additionally, if $G$ is Bredon duality then $G$ is virtually Poincar\'e duality if and only if $G$ is virtually-polycyclic if and only if $G$ is Bredon--Poincar\'e duality.
\end{Prop}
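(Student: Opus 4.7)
The plan is to glue the classical theory of virtually-soluble groups onto the Bredon picture from the paragraph immediately preceding the proposition. I would establish $(1) \Leftrightarrow (3)$ by appealing directly to \cite{MartinezPerezNucinkis-VirtuallySolubleGroupsOfTypeFPinfty}, which characterises virtually-soluble $\FP_\infty$ groups as exactly the virtually-soluble groups that are virtually torsion-free with $\vcd G = hG < \infty$, and moreover shows that such groups are automatically $\uFP$ with $\ucd G = hG$.

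For $(1) \Rightarrow (2)$, use (3) to pick a torsion-free soluble finite-index subgroup $H \le G$; then $H$ is soluble of type $\FP_\infty$ with finite Hirsch length, so Kropholler's theorem \cite{Kropholler-CohomologicalDimensionOfSolubleGroups} gives that $H$ is duality, hence $G$ is virtually duality. The reverse $(2) \Rightarrow (1)$ is immediate since duality groups are of type $\FP$. The implication $(4) \Rightarrow (1)$ follows because $W\{1\} = G$, so the $\uFP$ hypothesis together with Lemma \ref{lemma:OFFPn characterisation} forces $G$ to be $\FP_\infty$. The reverse $(1) \Rightarrow (4)$ is essentially the content of the paragraph preceding the proposition: $G$ is $\uFP$ with $\ucd G = hG$ by \cite{MartinezPerezNucinkis-VirtuallySolubleGroupsOfTypeFPinfty}, each normaliser $N_G F$ of a finite subgroup $F$ is virtually-soluble and of type $\FP_\infty$ by Lemma \ref{lemma:OFFPn characterisation} and \cite[5.1.1]{Robinson}, hence virtually duality by Kropholler, which yields the required cohomology condition on each $WF$.

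For the addendum, virtually-polycyclic implies Bredon--Poincar\'e duality by the second half of the paragraph preceding the proposition, and obviously implies virtually Poincar\'e duality. Conversely, virtually Poincar\'e duality forces virtually-polycyclic by applying \cite[Theorem 9.23]{Bieri-HomDimOfDiscreteGroups} to a torsion-free soluble finite-index subgroup. For Bredon--Poincar\'e duality implying virtually Poincar\'e duality, observe that a Bredon--Poincar\'e duality group is in particular Bredon duality, so by the main equivalences $G$ is virtually torsion-free; choosing a torsion-free finite-index $H$ and evaluating the Bredon--Poincar\'e condition at $F = \{1\}$ gives that $H^i(G, \ZZ G) \cong H^i(W\{1\}, \ZZ[W\{1\}])$ is $\ZZ$ concentrated in a single degree and zero elsewhere, and \cite[III.(6.5)]{Brown} transfers the same statement to $H$. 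The only genuine bookkeeping point, and thus the mildest obstacle, is checking that the equivalence $(1) \Leftrightarrow (3)$ may be applied not just to $G$ but also to every normaliser $N_G F$; this is legitimate since subgroups of virtually-soluble groups are virtually-soluble.
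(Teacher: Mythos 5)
Your proof is correct and follows essentially the same route as the paper's: the equivalence of the first three conditions is imported wholesale from the classical theory of soluble groups, $(4)\Rightarrow(1)$ is immediate from evaluating at the trivial subgroup, $(1)\Rightarrow(4)$ is the argument laid out in the paragraph preceding the proposition (normalisers of finite subgroups are virtually-soluble of type $\FP_\infty$, hence virtually duality by Kropholler), and the addendum is handled via Bieri's Theorem 9.23. One small caveat: you attribute the equivalence $(1)\Leftrightarrow(3)$ to \cite{MartinezPerezNucinkis-VirtuallySolubleGroupsOfTypeFPinfty}, whereas the paper attributes the equivalence $(1)\Leftrightarrow(2)\Leftrightarrow(3)$ to Kropholler's two papers \cite{Kropholler-CohomologicalDimensionOfSolubleGroups, Kropholler-OnGroupsOfTypeFP_infty}; the role of \cite{MartinezPerezNucinkis-VirtuallySolubleGroupsOfTypeFPinfty} is specifically to supply the Bredon-side conclusion that $G$ is $\uFP$ with $\ucd G = hG$. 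Your citation is a plausible alternative entry point, but the standard reference for the classical equivalences is Kropholler; you may want to double-check which statements \cite{MartinezPerezNucinkis-VirtuallySolubleGroupsOfTypeFPinfty} actually proves versus which it quotes.
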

\begin{proof}
 The equivalence of the first three is \cite{Kropholler-CohomologicalDimensionOfSolubleGroups} and \cite{Kropholler-OnGroupsOfTypeFP_infty}.  That $(4) \Rightarrow (1)$ is obvious and $(1) \Rightarrow (4)$ is the discussion above.
\end{proof}

\subsection{Elementary Amenable Groups}

If $G$ is an elementary amenable group of type $\FP_\infty$ then $G$ is virtually soluble \cite[p.4]{KMN-CohomologicalFinitenessConditionsForElementaryAmenable}, in particular Bredon duality over $\ZZ$ of dimension $hG$.  The converse, that every elementary amenable Bredon duality group is $\FP_\infty$, is obvious.

If $G$ is elementary amenable $\FP_\infty$ then the condition $H^n(G, \ZZ G) \cong \ZZ$ implies that $G$ is Bredon--Poincar\'e duality, so for all finite subgroups $H^{n_F}(N_GF, \ZZ N_GF) \cong \ZZ$.  A natural question is whether 
\[ H^{n_F}(N_G F, \ZZ [N_G F]) = \ZZ \]
can ever occur for an elementary amenable, or indeed a soluble Bredon-duality, but not Bredon--Poincar\'e duality group.  An example of this behaviour is given below.

\begin{Example}
We construct a finite index extension of the Baumslag-Solitar group $BS(1,p)$, for $p$ a prime.
\[BS(1,p) = \langle x,y  :  y^{-1}xy = x^p\rangle \]
This has a normal series (for an explanation see \cite[p.60]{LennoxRobinson}):
\[1 \unlhd \langle x \rangle \unlhd \langle \langle x \rangle \rangle \unlhd  BS(1,p)\]
Whose quotients are $\langle x \rangle / 1 \cong \ZZ$, $\langle \langle x \rangle \rangle / \langle x \rangle \cong C_{p^\infty}$ and $BS(1,p)/\langle \langle x \rangle \rangle \cong \ZZ$, where $C_{p^\infty}$ denotes the Pr\"ufer group (see \cite[p.94]{Robinson} for a definition).  Clearly $BS(1,p)$ is finitely generated torsion-free soluble with $hBS(1,p) = 2$, but not polycyclic, since $C_{p^\infty}$ does not have the maximal condition on subgroups \cite[5.4.12]{Robinson}, thus $BS(1,p)$ is not Poincar\'e duality.  Also since $BS(1,p)$ is an HNN extension of $\langle x \rangle \cong \ZZ$ it has cohomological dimension $2$ \cite[Proposition 6.12]{Bieri-HomDimOfDiscreteGroups} and thus $\cd BS(1,p) = hBS(1,p)$.  By Proposition \ref{prop:bredon equivalent to soluble virtual duality}, $BS(1,p)$ is a Bredon duality group.

Recall that elements of $BS(1,p)$ can be put in a normal form: $y^i x^k y^{-j} $ where $i,j\ge 0$ and if $i,j > 0$ then $n \nmid k$.  Consider the automorphism $\varphi$ of $BS(1,p)$, sending $x \mapsto x^{-1}$ and $y \mapsto y$, an automorphism since it is its own inverse and because the relation $ y^{-1}xy = x^p $ in $BS(1,p)$ implies the relation $y^{-1}x^{-1}y = x^{-p} $.  Let  $y^i x^k, y^{-j} $ be an element in normal form.
\[\varphi :y^i x^k y^{-j} \longmapsto y^i x^{-k} y^{-j}  \]
So the only fixed points of $\varphi$ are in the subgroup $\langle y \rangle \cong \ZZ$.  Form the extension 
\[1 \longrightarrow BS(1,p) \longrightarrow G \longrightarrow C_2 \longrightarrow 1 \]
Where $C_2$ acts by the automorphism $\varphi$.  The property of being soluble is extension closed \cite[5.1.1]{Robinson}, so $G$ is soluble virtual duality and Bredon duality by Proposition \ref{prop:bredon equivalent to soluble virtual duality}.  The normaliser 
\[ N_G C_2 = C_G C_2 = \{ g \in G  :  gz = zg \text{ for the generator }z \in C_2 \}\]
is the points in $G$ fixed by $\varphi$, so $C_GC_2 \cong \ZZ$.  Thus $N_GC_2$ is virtually-$\ZZ$ and $H^1(N_GC_2, \ZZ[N_GC_2]) \cong \ZZ$ \cite[13.5.5]{Geoghegan}.  Since $BS(1,p)$ is finite index in $G$, by \cite[III.(6.5)]{Brown}
\[H^2(G, \ZZ G) \cong H^2(BS(1,p), \ZZ [BS(1,p)]) \]
However since $BS(1,p)$ is not Poincar\'e duality, $ H^n(BS(1,p), \ZZ [BS(1,p)])$ is $\ZZ$-flat but not isomorphic to $\ZZ$.
\end{Example}

\begin{Remark}
Baues \cite{Baues-Infrasolvmanifolds} and Dekimpe \cite{Dekimpe-PolycyclicGroupAdmitsNILAffine} proved independently that any virtually polycyclic group $G$ can be realised as a NIL affine crystallographic group---$G$ acts properly, cocompactly, and by diffeomorphisms on a simply connected nilpotent Lie group of dimension $hG$.  Any connected, simply connected nilpotent Lie group is diffeomorphic to some Euclidean space \cite[I\S 16]{Knapp} and hence contractible, so any elementary amenable Bredon--Poincar\'e duality group has a manifold model for $\uE G$.
\end{Remark}

\section{The Reflection Group Trick}\label{section:reflection groups}
Recall Corollary \ref{cor:actions on hom manifolds}, that if $G$ has a cocompact $n$-dimensional $\ZZ$-homology manifold model $M$ for $\uE G$ such that all fixed point sets $M^H$ are $\ZZ$-homology manifolds, and $H$ is a finite $p$-subgroup of $G$ with $p \neq 2$ then $n - n_H$ is even.  In this section we construct Bredon--Poincar\'e duality groups $G$ over $\ZZ$ of arbitrary dimension such that, for any fixed prime $p \neq 2$:
\begin{enumerate}
 \item All of the finite subgroups of $G$ are $p$-groups.
 \item $\mathcal{V}(G)$ is any vector with $n-n_H$ even for all finite subgroups $H$.
\end{enumerate}

The method of constructing these examples was recommended to us by Ian Leary and uses the equivariant reflection group trick of Davis and Leary which we review below.
We only need a specific case of the trick, for a full explanation see \cite[\S 2]{DavisLeary-DiscreteGroupActionsOnAsphericalManifolds} or \cite[\S 11]{Davis}.  Note that we write ``$\Gamma$'' instead of ``$W$'', as is used in \cite{DavisLeary-DiscreteGroupActionsOnAsphericalManifolds}, to denote a Coxeter group so the notation can't be confused with our use of $W_GH$ for the Weyl group.

Let $M$ be an compact contractible $n$-manifold with boundary $\partial M$, such that $\partial M$ is triangulated as a flag complex.  Let $G$ be a group acting on $M$ such that the induced action on the boundary is by simplicial automorphisms.  Let $\Gamma$ be the right angled Coxeter group corresponding to the flag complex $\partial M$, the group $G$ acts by automorphisms on $\Gamma$ and we can form the semi-direct product $\Gamma \rtimes G$.  Moreover there is a space $\mathcal{U} = \mathcal{U}(M, \partial M, G)$ such that 
\begin{enumerate}
 \item $\mathcal{U}$ is an contractible $n$-manifold without boundary.
 \item $\Gamma \rtimes G$ acts properly and cocompactly on $\mathcal{U}$.
 \item For any finite subgroup $H$ of $G$, we have $\mathcal{U}^H = \mathcal{U}(M^H, (\partial M)^H, W_GH)$, in particular $\dim \mathcal{U}^H = \dim M^H$.
 \item $W_{\Gamma \rtimes G}H = \Gamma^H \rtimes W_GH$, where $\Gamma^H$ is the right-angled Coxeter group associated to the flag complex $(\partial M)^H$.
 \item If $M$ is the cone on a finite complex then $\mathcal{U}$ has a CAT(0) cubical structure such that the action of $\Gamma \rtimes G$ is by isometries.  
\end{enumerate}

Every Coxeter group contains a finite-index torsion-free subgroup \cite[Corollary D.1.4]{Davis}, let $\Gamma^\prime$ denote such a subgroup of $\Gamma$ and assume that $\Gamma^\prime$ is normal.  Then $\Gamma^\prime \rtimes G$ is finite index in $\Gamma \rtimes G$ and so acts properly and cocompactly on $\mathcal{U}$ also.

A CAT(0) cubical complex has a CAT(0) metric \cite[Remark 2.1]{Wise-FromRichesToRaags} and any contractible CAT(0) space is a model for $\uE G$ \cite[Corollary II.2.8]{BridsonHaefliger} (see also \cite[Theorem 4.6]{Luck-SurveyOnClassifyingSpaces}).  This implies the following:

\begin{Lemma}\label{lemma:arbVG construction is uEG}
 If $M$ is the cone on a finite complex then $\mathcal{U}$ is a cocompact model for $\uE (\Gamma \rtimes G)$ and for $\uE (\Gamma^\prime \rtimes G)$.  In particular, $\Gamma^\prime \rtimes G$ is of type $\uFP$.
\end{Lemma}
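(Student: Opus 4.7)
The plan is to verify directly that $\mathcal{U}$ satisfies the three conditions characterising a cocompact classifying space for proper actions of $\Gamma \rtimes G$: a proper cocompact action, contractible fixed point sets $\mathcal{U}^H$ for every finite subgroup $H$, and empty fixed point sets for every infinite subgroup. The proper cocompact action is immediate from property (2) in the list preceding the lemma. Emptiness of $\mathcal{U}^H$ for infinite $H$ follows from properness: any subgroup fixing a point is contained in the (finite) stabiliser of that point.

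The crux is contractibility of $\mathcal{U}^H$ for finite $H \le \Gamma \rtimes G$. Under the cone hypothesis, property (5) endows $\mathcal{U}$ with a CAT(0) cubical structure on which $\Gamma \rtimes G$ acts by isometries. By Wise's observation cited just before the lemma, $\mathcal{U}$ then carries a complete CAT(0) metric. I would next invoke the fixed-point theorem in CAT(0) geometry (Bridson--Haefliger II.2.8, as cited in the excerpt): any finite group of isometries of a complete CAT(0) space has a non-empty, geodesically convex fixed point set. Geodesic convexity forces contractibility via the straight-line homotopy along geodesics to a chosen basepoint. Together with the preceding paragraph this shows that $\mathcal{U}$ is a cocompact model for $\uE(\Gamma \rtimes G)$.

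For $\Gamma^\prime \rtimes G$, since $\Gamma^\prime$ is a normal finite-index subgroup of $\Gamma$, the group $\Gamma^\prime \rtimes G$ is a finite-index subgroup of $\Gamma \rtimes G$. Restricting the action yields a proper action by isometries on the same complete CAT(0) space, still cocompact by finite index, and the argument of the previous paragraph applies verbatim to conclude that $\mathcal{U}$ is also a cocompact model for $\uE(\Gamma^\prime \rtimes G)$. Finally, because $\mathcal{U}$ is a \emph{finite-dimensional} cocompact model for the classifying space for proper actions, $\Gamma^\prime \rtimes G$ is of type $\uFP_\infty$, and moreover $\ucd(\Gamma^\prime \rtimes G) \le \dim \mathcal{U} < \infty$ by the L\"uck--Meintrup comparison $\ucd \le \ugd$ noted in the introduction; hence $\Gamma^\prime \rtimes G$ is of type $\uFP$. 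No step presents a serious obstacle once the cone hypothesis is used to bring CAT(0) fixed-point theory into play.
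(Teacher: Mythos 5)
Your proof is correct and follows essentially the same route as the paper: the paper's (terse) justification, given in the paragraph immediately preceding the lemma, is exactly the CAT(0) argument you use — property (5) gives a CAT(0) cubical structure when $M$ is a cone, Wise's remark upgrades this to a CAT(0) metric, and Bridson--Haefliger II.2.8 (the Bruhat--Tits fixed point theorem) then shows the fixed sets of finite subgroups are non-empty and convex, hence contractible, so $\mathcal{U}$ is a model for $\uE$. You have spelled out the routine verifications (properness forcing empty fixed sets for infinite subgroups, passage to the finite-index subgroup $\Gamma'\rtimes G$, and the deduction of $\uFP$ from a finite-dimensional cocompact model) that the paper leaves implicit, which is a reasonable thing to do.
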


\begin{Lemma}\label{lemma:arbVG construction, conj of finite subgroups}
 Let $M$ be the cone on a finite complex and assume that $G$ acts on $M$ with a fixed point.  If $K$ is a finite subgroup of $\Gamma \rtimes G$ then $K$ is subconjugate to $G$.
\end{Lemma}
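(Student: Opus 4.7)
The plan is to combine the fact that $\mathcal{U}$ is a cocompact model for $\uE(\Gamma \rtimes G)$ (Lemma~\ref{lemma:arbVG construction is uEG}) with the cone structure on $M$. Since $K$ is finite, the fixed-point set $\mathcal{U}^K$ is nonempty; pick any $p \in \mathcal{U}^K$ and write $p = [w, x]$ for some $w \in \Gamma$ and $x \in M$. Let $c \in M$ be the $G$-fixed point guaranteed by hypothesis; because $M$ is a cone we may arrange that $c$ is the cone point and that $G$ acts through cone-preserving maps. A direct computation shows that the $(\Gamma \rtimes G)$-orbit $\{[w', c] : w' \in \Gamma\} \subseteq \mathcal{U}$ is exactly the set of points whose stabilizer is a conjugate of $G$, with $\mathrm{Stab}([w', c]) = (w', 1) G (w', 1)^{-1}$. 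Thus the lemma reduces to showing that some point in this orbit is $K$-fixed.

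To produce such a point, I would try to build a $(\Gamma \rtimes G)$-equivariant retraction $r: \mathcal{U} \to \{[w', c]\}_{w' \in \Gamma}$ using the cone structure on $M$: on each chamber $wM$ the map $r$ sends $[w, y]$ to $[w, c]$ via the natural straight-line homotopy in $M \cong CL$. Equivariance under $\Gamma$ is automatic, since $\Gamma$ acts only on the first coordinate, and equivariance under $G$ follows from $G$ fixing $c$ and preserving cone lines. Applying $r$ to $p \in \mathcal{U}^K$ would yield a $K$-fixed point of the form $[w, c]$ (since $K$-equivariance of $r$ forces $r(p) \in \mathcal{U}^K$), which then gives $K \le (w, 1)G(w, 1)^{-1}$, as required.

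The main obstacle is the well-definedness of $r$: the naive cone-retraction of $M$ onto $c$ does not respect the mirror identifications on $\partial M$, because a boundary point $y$ lying in mirrors $M_{v_1}, \ldots, M_{v_k}$ is identified in $\mathcal{U}$ with its images under $\Gamma_{\{v_1, \ldots, v_k\}}$, whereas after even a small positive amount of retraction the image lies in the interior of $M$ and so belongs to no mirror. I expect this to be circumvented by exploiting the CAT(0) cubical geometry of $\mathcal{U}$ directly---for instance, by taking the CAT(0) closest-point projection of $p$ onto the discrete orbit $\{[w', c]\}$, which is automatically $K$-invariant because $K$ fixes $p$---or by using the cone hypothesis to argue that the reflection component of $\mathrm{Stab}(p)$ must be absorbed into a suitable conjugate, so that $\mathcal{U}^K$ is forced to meet the orbit of cone points.
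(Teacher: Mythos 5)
Your reduction is correct, and it is the same one the paper makes: the $\Gamma\rtimes G$-orbit of $[1,c]$ is precisely the vertex set of the CAT(0) cube complex $\mathcal{U}$, and these vertices are exactly the points whose stabilisers are conjugates of $G$, so it suffices to find a $K$-fixed vertex. The paper's proof is then just two sentences: $K$ fixes a vertex, and vertex stabilisers are subconjugate to $G$. Where your proposal diverges is in trying to produce a $K$-fixed vertex by an explicit equivariant construction, and that is where the gaps lie.

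Neither of your two explicit constructions works. You already flag the ill-definedness of the naive cone retraction across the mirror identifications, so I will focus on the proposed fix. The closest-point-projection alternative also fails, for a reason you gloss over: the orbit of $c$ is a discrete and non-convex subset of $\mathcal{U}$, so there is no CAT(0) projection onto it. What you do get is that the set of vertices nearest to a $K$-fixed point $p$ is $K$-invariant, but $K$ may permute those nearest vertices without fixing any single one --- a finite group acting cubically on a CAT(0) cube complex need not fix a vertex at all, since it may stabilise a cube with an inversion (a reflection of $D_\infty$ acting on the Davis line does exactly this, swapping the two cone-point vertices adjacent to its mirror). So the observation that $K$ fixes $p$ does not hand you a $K$-fixed vertex via projection. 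The third route you sketch --- analysing $\mathrm{Stab}(p)$ directly and showing its Coxeter part must vanish --- is the right one and is what the paper's phrase ``examining the construction of $\mathcal{U}$'' is doing. If $p = [w,x]$ then $\mathrm{Stab}_{\Gamma\rtimes G}(p) = (w,1)\bigl(\Gamma_{S(x)}\rtimes G_x\bigr)(w,1)^{-1}$, where $S(x)$ records the mirrors through $x$; the job is to show that $(w,1)^{-1}K(w,1)$ meets $\Gamma_{S(x)}$ trivially, which is a direct statement about the stabiliser of whatever point $K$ fixes and needs no equivariant retraction or projection at all.
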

\begin{proof}
Since $\mathcal{U}$ is a model for $\uE (\Gamma \rtimes G)$, the finite subgroup $K$ necessarily fixes a vertex of $\mathcal{U}$ and hence is subconjugate to the point stabiliser.  Examining the construction of $\mathcal{U}$ shows that the stabilisers of the vertices are all subconjugate to $G$ and the result follows.
\end{proof}

\begin{Theorem}\label{theorem:arbVG general setup}
Let $G$ be a finite group with real representation $\rho : G \longhookrightarrow \text{GL}_n \RR$ and, for all subgroups $H$ of $G$, let $n_H$ denote the dimension of the subspace of $\RR^n$ fixed by $H$.  Then there exists a Bredon--Poincar\'e duality group $\Gamma^\prime \rtimes G$ of dimension $n$ such that:
\[\mathcal{V} (\Gamma^\prime \rtimes G) = \{ n_H  :  H \le G\}\]
\end{Theorem}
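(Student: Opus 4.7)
The plan is to apply the equivariant reflection group trick with $M = D^n$, the closed unit ball in $\RR^n$, carrying the linear $G$-action provided by $\rho$. This $M$ is the cone on $\partial M = S^{n-1}$, the origin is a $G$-fixed point, and for each subgroup $H \le G$ the fixed set $M^H = D^n \cap (\RR^n)^H$ is a linear disk of dimension $n_H$, which is in particular contractible. A preliminary step is to equip $\partial M$ with a $G$-equivariant triangulation which is a flag complex; this can be arranged by starting with any $G$-equivariant simplicial structure on $S^{n-1}$ and passing to a sufficiently fine barycentric subdivision, chosen so that every fixed subcomplex $(\partial M)^H$ is a full flag subcomplex on which $W_G H$ acts simplicially.

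With this in hand, the trick produces a right-angled Coxeter group $\Gamma$ with $G$-action and a contractible $n$-manifold $\mathcal{U} = \mathcal{U}(M, \partial M, G)$ on which $\Gamma \rtimes G$ acts properly, cocompactly, and (by property (5)) by isometries of a CAT(0) cubical structure. Choose a torsion-free normal finite-index subgroup $\Gamma^\prime \lhd \Gamma$. Then $\Gamma^\prime \rtimes G$ also acts properly and cocompactly on $\mathcal{U}$, and by Lemma \ref{lemma:arbVG construction is uEG} the space $\mathcal{U}$ is a cocompact model for $\uE (\Gamma^\prime \rtimes G)$, so $\Gamma^\prime \rtimes G$ is of type $\uFP$ with $\ucd(\Gamma^\prime \rtimes G) = n$.

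To identify $\mathcal{V}$ and check Bredon--Poincar\'e duality, I would combine Lemma \ref{lemma:arbVG construction, conj of finite subgroups} with property (3) of the trick. The former says that every finite subgroup of $\Gamma^\prime \rtimes G$ is subconjugate to $G$, so up to conjugacy the finite subgroups of $\Gamma^\prime \rtimes G$ are precisely the subgroups $H \le G$. The latter says $\mathcal{U}^H = \mathcal{U}(M^H, (\partial M)^H, W_G H)$, which is itself produced by the trick and is therefore a contractible manifold of dimension $n_H$ on which the Weyl group $W_{\Gamma^\prime \rtimes G} H$ acts properly and cocompactly (indeed a cocompact model for its $\uE$). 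Applying Theorem \ref{theorem:proper actions on Racyclic Rhomology then BPD} to this action gives
\[ H^i(W_{\Gamma^\prime \rtimes G} H, \ZZ[W_{\Gamma^\prime \rtimes G} H]) \cong \begin{cases} \ZZ & i = n_H, \\ 0 & \text{else,} \end{cases} \]
which is exactly the Bredon--Poincar\'e duality condition. Since each $H \le G$ arises and conversely each finite subgroup is conjugate to some $H \le G$, we conclude $\mathcal{V}(\Gamma^\prime \rtimes G) = \{ n_H : H \le G \}$.

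The main obstacle is the equivariant flag triangulation of $\partial M$: one must subdivide so that the flag condition is preserved, so that $(\partial M)^H$ remains a flag subcomplex, and so that property (3) of the reflection group trick identifies $\mathcal{U}^H$ with the analogous construction built from $(\partial M)^H$ with its induced right-angled Coxeter group. This is a standard but somewhat technical piece of equivariant combinatorial topology; once it is in place, the remainder of the argument is a direct assembly of properties (1)--(5) of the reflection group trick together with the duality statements already established.
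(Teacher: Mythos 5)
Your proposal follows the same route as the paper: restrict $\rho$ to $(\DD^n, S^{n-1})$, equivariantly triangulate the boundary as a flag complex, run the Davis--Leary trick, pass to a torsion-free normal finite-index $\Gamma^\prime \lhd \Gamma$, and use the action on $\mathcal{U}$ (and on the fixed sets $\mathcal{U}^H$) to get the Bredon--Poincar\'e duality conditions. Two points need tightening.

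First, you assert that Lemma~\ref{lemma:arbVG construction is uEG} gives $\ucd(\Gamma^\prime \rtimes G) = n$; it only gives $\uFP$ and the upper bound $\ucd \le \ugd \le n$. The lower bound needs a separate argument: the proper cocompact action on the contractible $n$-manifold $\mathcal{U}$ forces $H^n(\Gamma^\prime \rtimes G, \ZZ[\Gamma^\prime\rtimes G]) \cong \ZZ$ (Theorem~\ref{theorem:proper actions on Racyclic Rhomology then BPD}), whence $n = \cd_\QQ (\Gamma^\prime \rtimes G) \le \ucd(\Gamma^\prime \rtimes G) \le n$, using \cite[Theorem 2]{BradyLearyNucinkis-AlgAndGeoDimGroupsWithTorsion}.

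Second, and more substantively, Lemma~\ref{lemma:arbVG construction, conj of finite subgroups} only says a finite $K \le \Gamma^\prime \rtimes G$ is conjugate \emph{in $\Gamma \rtimes G$} to some $H \le G$. Since $\Gamma^\prime \rtimes G$ need not be normal in $\Gamma\rtimes G$, such $K$ need not be $(\Gamma^\prime\rtimes G)$-conjugate to any subgroup of $G$, so checking the Weyl group condition on the subgroups $H \le G$ alone does not cover all conjugacy classes in $\Gamma^\prime \rtimes G$. Your statement "up to conjugacy the finite subgroups of $\Gamma^\prime \rtimes G$ are precisely the subgroups of $G$" elides exactly this. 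The paper closes the gap by observing that $N_{\Gamma^\prime \rtimes G}K$ is finite index in $N_{\Gamma \rtimes G}K \cong N_{\Gamma \rtimes G}H$, which in turn contains $N_{\Gamma^\prime \rtimes G}H$ with finite index, and then applying Shapiro's lemma and \cite[Proposition 2.7]{Bieri-HomDimOfDiscreteGroups} to transfer the cohomology computation from $W_{\Gamma^\prime \rtimes G}H$ to $W_{\Gamma^\prime \rtimes G}K$. Alternatively you could argue directly with $\mathcal{U}^K = g^{-1}\mathcal{U}^H$ for $g \in \Gamma\rtimes G$, which is again a contractible $n_H$-manifold on which $W_{\Gamma^\prime \rtimes G}K$ acts properly cocompactly; but either way you must say something for a general $K$, not just for $H \le G$.
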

\begin{proof}
 Restrict $\rho$ to an action on $(\DD^n, S^{n-1})$ and choose a flag triangulation of $S^{n-1}$ respecting the action of $G$ (use for example \cite{Illman-SmoothEquiTriangulationsForGFinite}).  Now use the reflection group trick to obtain a Coxeter group $\Gamma$, normal finite-index torsion-free subgroup $\Gamma^\prime$ and space $\mathcal{U}$.  Lemma \ref{lemma:arbVG construction is uEG} gives that $\Gamma^\prime \rtimes G$ is of type $\uFP$.  

 Since $G$ has an $n$-dimensional model for $\uE G$ we have that $\ugd G \le n$ and by Lemma \ref{Lemma:observations}(1) $\cd_\QQ G = n_{1} = n$.  Using the chain of inequalities $n = \cd_\QQ G \le \ucd G \le \ugd G \le n$ shows that $\ucd G = n$.  It remains only to check the condition on the cohomology of the Weyl groups of the finite subgroups.
 
 For any finite subgroup $H$ of $G$, the Weyl group $W_{\Gamma^\prime \rtimes G} H$ acts properly and cocompactly on $\mathcal{U}(M^H, (\partial M)^H, W_G H)$ which is a contractible $n_H$-manifold without boundary.  By Corollary \ref{cor:actions on Rorient Rhom manifolds gives BPD} we have that 
 \[ H^n(W_{\Gamma^\prime \rtimes G} H, \ZZ[W_{\Gamma^\prime \rtimes G} H]) = \left\{ \begin{array}{l l}\ZZ & \text{ if } i = n_H \\ 0 & \text{else} \end{array} \right.  \]
 
 If $K$ is any finite subgroup of $\Gamma^\prime \rtimes G$ then, by Lemma \ref{lemma:arbVG construction, conj of finite subgroups}, $K$ is conjugate in $\Gamma \rtimes G$ to some $H \le G$.  In particular the normalisers of $H$ and $K$ in $\Gamma \rtimes G$ are isomorphic.  Also, since $\Gamma^\prime \rtimes G$ is finite index in $\Gamma \rtimes G$, we have that $N_{\Gamma^\prime \rtimes G}K$ is finite index in $N_{\Gamma \rtimes G}K$.  Calculating the cohomology:
\begin{align*} 
H^n(N_{\Gamma^\prime \rtimes G}K, \ZZ[N_{\Gamma^\prime \rtimes G}K]) 
&\cong H^n(N_{\Gamma \rtimes G}K, \ZZ[N_{\Gamma \rtimes G}K]) \\
&\cong H^n(N_{\Gamma \rtimes G}H, \ZZ[N_{\Gamma \rtimes G}H]) \\
&\cong H^n(N_{\Gamma^\prime \rtimes G}H, \ZZ[N_{\Gamma^\prime \rtimes G}H])
\end{align*}
From the short exact sequence
\[ 0 \longrightarrow K \longrightarrow N_{\Gamma^\prime \rtimes G} K \longrightarrow W_{\Gamma^\prime \rtimes G} K \longrightarrow 0\]
and \cite[Proposition 2.7]{Bieri-HomDimOfDiscreteGroups} we see that $H^n(N_{\Gamma^\prime \rtimes G}H, \ZZ[N_{\Gamma^\prime \rtimes G}H])$ is isomorphic to $H^n(W_{\Gamma^\prime \rtimes G}K, \ZZ[W_{\Gamma^\prime \rtimes G}K])$, thus 

\[ H^n(W_{\Gamma^\prime \rtimes G} K, \ZZ[W_{\Gamma^\prime \rtimes G} K]) = \left\{ \begin{array}{l l}\ZZ & \text{ if } i = n_H \\ 0 & \text{else} \end{array} \right.  \]
\end{proof}

\begin{Example}\label{example:arb VG for pgroups}
We construct a group with the properties mentioned at the beginning of this section.  It will be of the form $G = \Gamma^\prime \rtimes C_{p^m}$, where $C_{p^m}$ is the cyclic group of order $p^m$.

For $i$ between $1$ and $m$ let $w_i$ be any collection of positive integers and let $n = \sum_i 2w_i$. 
If $c$ is a generator of the cyclic group $C_{p^m}$, then $C_{p^m}$ embeds into the orthogonal group $O(n)$ via the real representation
 \begin{align*}
 \rho: C_{p^m} &\longhookrightarrow O(n) \\
 c &\longmapsto (R_{2 \pi/p})^{\oplus w_1} \oplus (R_{2 \pi/p^2})^{\oplus  w_2} \oplus \cdots \oplus (R_{2 \pi/p^m})^{\oplus w_m} 
 \end{align*}
Where $R_\theta$ is the the $2$-dimensional rotation matrix of angle $\theta$.  The image is in $O(n)$ since we chose $n$ such that $2w_1 + \cdots 2w_n = n$.
 
If $i$ is some integer between $1$ and $m$ then there is a unique subgroup $C_{p^{m-i+1}}$ of $C_{p^m}$ with generator $c^{p^{i}}$, in fact this enumerates all subgroups of $C_{p^m}$ except the trivial subgroup.  Under $\rho$, this generator maps to
\[ \rho : c^{p^{i}} \longmapsto 0 \oplus \cdots \oplus 0 \oplus \left( R_{p^{i} 2 \pi / p^{i+1} } \right)^{\oplus w_{i+1}} \oplus \cdots \oplus \left( R_{p^{i} 2 \pi / p^{m} } \right)^{\oplus w_n}\]
In other words, the fixed point set corresponding to $C_{p^{m-i+1}}$ is $\RR^{2w_1 + \cdots + 2w_i}$.  Thus the set of dimensions of the fixed point subspaces of non-trivial finite subgroups of $C_{p^m}$ are
 \[\{2w_1, 2(w_1 + w_2), \ldots, 2(w_1 + w_2 + \ldots + w_{m-1}) \} \]
Applying Theorem \ref{theorem:arbVG general setup} gives a group $\Gamma^\prime \rtimes C_{p^m}$ of type $\uFP$ with 
\[\ucd G = n = \sum^m_{i=1} 2w_i\]
and such that 
\[\mathcal{V}( \Gamma^\prime \rtimes C_{p^m} ) = \{2w_1, 2(w_1 + w_2), \ldots, 2(w_1 + w_2 + \ldots + w_{m-1}) \} \]
Since there were no restrictions on the integers $w_i$, using this technique we can build an even dimensional Bredon--Poincar\'e duality group with any $\mathcal{V}(G)$, as long as all $n_H$ are even.

The case $n$ is odd reduces to the case $n$ is even.  Proposition \ref{prop:duality direct products of BPD groups} shows that if a group $G$ is Bredon--Poincar\'e duality then taking the direct product with $\ZZ$ gives a Bredon--Poincar\'e duality group $G \times \ZZ$ where
\[\mathcal{V}(G \times \ZZ) \cong \{ v + 1  :  v \in \mathcal{V}(G)\} \]
Thus we can build a group with odd $n$ and $\mathcal{V}$ containing only odd elements by building a group with even $n$ and then taking a direct product with $\ZZ$.
\end{Example}

\section{Low Dimensions}\label{section:low dimensions}

This section is devoted to the study of Bredon duality groups and Bredon--Poincar\'e duality groups of low dimension.  We completely classify those of dimension $0$ in Lemma \ref{lemma:duality D0}.  We partially classify those of dimension 1---see Propositions \ref{prop:duality uD1 Rtf} and \ref{prop:duality uPD1 Rtf}, and Question \ref{question:duality D1 not Rtf}---and there is a discussion of the dimension $2$ case.

Recall that a group $G$ is duality of dimension $0$ over $R$ if and only if $\lvert G \rvert$ is finite and invertible in $R$ \cite[Proposition 9.17(a)]{Bieri-HomDimOfDiscreteGroups}.

\begin{Lemma}\label{lemma:duality D0}
 $G$ is Bredon duality of dimension $0$ over $R$ if and only if $\vert G \vert$ is finite.  Any such group is necessarily Bredon--Poincar\'e duality.
\end{Lemma}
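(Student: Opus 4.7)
The plan is to establish both implications of the biconditional and then observe that the backward direction already produces the Poincar\'e refinement.

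First I will handle the backward direction. If $G$ is finite, then $G \in \mathfrak{F}$, so $G/G$ lies in $\OFG$. I will check directly that the free Bredon module $R[-,G/G]$ coincides with the constant module $\uR$: for each $G/K$ there is a unique $G$-map $G/K \to G/G$, so $R[-,G/G](G/K) = R$ and every structure map is the identity. Hence $\uR$ is projective, giving $\ucd G = 0$, and the $\uFP$ property follows from Lemma \ref{lemma:OFFPn characterisation} because $G$ has finitely many subgroups and every Weyl group $WH = N_GH/H$ is finite, hence $\FP_\infty$. For the cohomological condition, for each finite subgroup $H \le G$ I will use Shapiro's lemma: since $WH$ is finite, $R[WH] \cong \Ind_1^{WH} R$ as $R[WH]$-modules, so
\[H^i(WH, R[WH]) \cong H^i(1, R) = \begin{cases} R & i = 0 \\ 0 & \text{else,}\end{cases}\]
so $n_H = 0$ uniformly and each top cohomology is exactly $R$. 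This already gives the Bredon--Poincar\'e refinement claimed in the last sentence of the lemma.

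For the forward direction, suppose $G$ is Bredon duality of dimension $0$ over $R$. Then $\uR$ is projective (by $\ucd G = 0$) and finitely generated (by $\uFP_0$). I will choose a finite generating set via Lemma \ref{lemma:yoneda-type} to obtain a surjection $\bigoplus_{i=1}^{k} R[-,G/H_i] \twoheadrightarrow \uR$ with each $H_i$ finite, and projectivity splits it, exhibiting $\uR$ as a direct summand. Evaluating at $G/1$ then produces a split surjection of $RG$-modules $\bigoplus_{i=1}^k R[G/H_i] \twoheadrightarrow R$ (trivial action), and the section picks out a $G$-invariant element whose augmentation equals $1$.

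The main obstacle is ruling out such a section when $G$ is infinite. Here I will use the elementary observation that for a finite subgroup $H_i$ of an infinite group $G$ the coset space $G/H_i$ is infinite, so any $G$-invariant element of the permutation module $R[G/H_i]$ would have all of its finitely many support coefficients equal and therefore be zero. Summing over $i$, the invariants of $\bigoplus R[G/H_i]$ vanish, contradicting the existence of a nonzero section. Thus $G$ must be finite, and combined with the backward direction every such $G$ is automatically Bredon--Poincar\'e duality of dimension $0$.
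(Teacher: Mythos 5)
Your proof is correct. The backward direction follows the paper's approach in spirit: show $\ucd_R G = 0$ and $G$ is $\uFP$ using that all Weyl groups are finite, then compute $H^*(WH, R[WH])$; your explicit identification $\uR \cong R[-,G/G]$ and appeal to Shapiro's lemma are slightly more self-contained than the paper's bare citation of Geoghegan, but the content is the same. The forward direction, however, is a genuinely different argument. The paper cites the characterization that $H^0(G,RG) \cong R$ for $G$ finite and $0$ otherwise, and concludes $G$ is finite; this implicitly uses that $n_1 = 0$ once $\ucd_R G = 0$, a fact the paper's Lemma~\ref{Lemma:observations} only records over $\ZZ$ or for $R$-torsion-free groups. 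Your argument sidesteps this by working directly from the projectivity of $\uR$: a split surjection $\bigoplus_{i=1}^k R[-,G/H_i] \twoheadrightarrow \uR$, evaluated at $G/1$, exhibits the trivial $RG$-module $R$ as a direct summand of $\bigoplus_{i} R[G/H_i]$, and since $(R[G/H_i])^G = 0$ whenever $[G:H_i]$ is infinite, the section must vanish unless $G$ is finite. This is self-contained, works uniformly over any commutative ring $R$, and makes explicit the role of the projectivity of $\uR$, which the paper's terse proof leaves in the background.
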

\begin{proof}
 By \cite[13.2.11]{Geoghegan}
\[H^0(G, R G) = \left\{ \begin{array}{l l} R & \text{if $\vert G \vert$ is finite} \\ 0 & \text{else.} \end{array}\right.\]
Hence if $G$ is Bredon duality of dimension $0$ then $G$ is finite and moreover $G$ is Bredon--Poincar\'e duality.
 
Conversely, if $G$ is finite then $\ucd_R G = 0$ and $G$ is $\uFP_\infty$ over $R$.  Finally the Weyl groups of any finite subgroup will be finite so by  \cite[13.2.11,13.3.1]{Geoghegan}.
\[H^n(WH, R [WH]) = \left\{ \begin{array}{c c} R & \text{ if } n = 0 \\ 0 & \text{ if } n > 0 \end{array}\right.\]
Thus $G$ is Bredon--Poincar\'e duality of dimension $0$.
\end{proof}

The duality groups of dimension $1$ over $R$ are exactly the groups of type $\FP_1$ over $R$ (equivalently finitely generated groups \cite[Proposition 2.1]{Bieri-HomDimOfDiscreteGroups}) with $\cd_R G = 1$ \cite[Proposition 9.17(b)]{Bieri-HomDimOfDiscreteGroups}.

\begin{Prop}\label{prop:duality uD1 Rtf}
If $G$ is infinite $R$-torsion free, then the following are equivalent:
\begin{enumerate}
 \item $G$ is Bredon duality over $R$, of dimension $1$.
 \item $G$ is finitely generated and virtually-free.
 \item $G$ is virtually duality over $R$, of dimension $1$. 
\end{enumerate}
\end{Prop}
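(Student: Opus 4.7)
The plan is to prove the cycle $(1)\Rightarrow(2)\Rightarrow(3)\Rightarrow(1)$. For $(1)\Rightarrow(2)$, I would first apply Lemma~\ref{lemma:OFFPn characterisation} with $K=1$ to conclude that $G$ itself is $\FP_\infty$ over $R$, hence finitely generated. Since $G$ is $R$-torsion-free, Lemma~\ref{lemma:Rtf then cdRG le ucdRG} gives $\cd_R G\le\ucd_R G=1$, and the hypothesis that $G$ is infinite forces $\cd_R G=1$. I would then invoke Dunwoody's characterisation of groups of $R$-cohomological dimension one as fundamental groups of graphs of finite groups whose vertex stabilisers have orders invertible in $R$; a finitely generated such group is, by the Karrass--Pietrowski--Solitar theorem, virtually-free.

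For $(2)\Rightarrow(3)$, I would take a finite-index free subgroup $F\le G$; finite index in a finitely generated group makes $F$ finitely generated, and a non-trivial finitely generated free group is $\FP$ over $R$ with $\cd_R = 1$ and $H^1(F,RF)$ a free (hence flat) $R$-module, so $F$ is duality of dimension one over $R$, and $G$ is virtually duality of dimension one.

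For $(3)\Rightarrow(1)$, if $H\le G$ is finite-index and duality of dimension one over $R$, then $H$ is $\FP$ over $R$ with $\cd_R H=1$, so the same Dunwoody argument shows $H$ is finitely generated virtually-free, whence so is $G$. I would then pick a Bass--Serre tree $T$ exhibiting $G$ as a finite graph of finite groups; this is a $1$-dimensional cocompact model for $\uE G$. For each finite subgroup $K$, the fixed subtree $T^K$ is non-empty, and $N_GK$ acts on $T^K$ cocompactly with finite vertex stabilisers, so $N_GK$, and therefore $WK=N_GK/K$, is finitely generated virtually-free. Lemma~\ref{lemma:OFFPn characterisation} then yields $G\in\uFP$ with $\ucd_R G = 1$. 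It remains to compute $H^i(WK,R[WK])$: if $WK$ is finite, this is concentrated in degree zero and equal to $R$, giving $n_K=0$; if $WK$ is infinite, applying \cite[III.(6.5)]{Brown} to a finite-index non-trivial free subgroup $F'\le WK$ gives $H^i(WK,R[WK])\cong H^i(F',RF')$, which vanishes for $i\ne 1$ and is a free $R$-module in degree one, so $n_K=1$ with the required flatness.

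The main obstacle is the appeal to Dunwoody's theorem in $R$-torsion-free rather than torsion-free generality (it is used twice, to pass from low $\cd_R$ and finite generation to a Bass--Serre decomposition and thence to virtual-freeness); once that structural fact is in hand the Bredon-theoretic conditions in $(1)$ reduce to a routine examination of the $N_GK$-action on the fixed subtree $T^K$, together with the standard cohomology of finitely generated free groups.
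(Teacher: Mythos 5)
Your proof is correct and follows essentially the same route as the paper: Lemma~\ref{lemma:Rtf then cdRG le ucdRG} plus Dunwoody to get virtual freeness from $(1)$, the standard computation $H^1(F,RF)$ for a finitely generated free $F$ to get $(2)\Rightarrow(3)$, and the cocompact tree action to verify the Bredon conditions. The only stylistic difference is in establishing the Weyl group conditions for $(2)\Rightarrow(1)$: the paper argues purely algebraically (once $G$ is known to be $\uFP$, Lemma~\ref{lemma:OFFPn characterisation} gives $N_GK$ finitely generated, and subgroups of virtually free groups are virtually free), whereas you argue geometrically that $N_GK$ acts cocompactly on the fixed subtree $T^K$; the cocompactness of that sub-action is true but not immediate, and the paper's algebraic shortcut avoids having to justify it. Also, you should note that $\uFP_0$ (finitely many conjugacy classes of finite subgroups) is part of what Lemma~\ref{lemma:OFFPn characterisation} requires, which for $G$ virtually free follows from the cocompact tree model; the paper handles this by first deducing $G\in\uFP$ directly from the cocompact $\uE G$ and only then reading off properties of the normalisers.
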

\begin{proof}
 That $2 \Rightarrow 3$ is \cite[Proposition 9.17(b)]{Bieri-HomDimOfDiscreteGroups}.  For $3 \Rightarrow 2$, let $G$ be virtually duality over $R$ of dimension $1$, then by \cite{Dunwoody-AccessabilityAndGroupsOfCohomologicalDimensionOne} $G$ acts properly on a tree.  Since $G$ is assumed finitely generated, by \cite[Theorem 3.3]{Antolin-CayleyGraphsOfVirtuallyFreeGroups} $G$ is virtually-free.

 For $1 \Rightarrow 2$, if $G$ is Bredon duality over $R$ of dimension $1$, then $G$ is automatically finitely generated and $\ucd_R G = 1$.  By Lemma \ref{lemma:Rtf then cdRG le ucdRG} $\cd_R G = 1$ so, as above, by \cite{Dunwoody-AccessabilityAndGroupsOfCohomologicalDimensionOne} and \cite[Theorem 3.3]{Antolin-CayleyGraphsOfVirtuallyFreeGroups}, $G$ is virtually-free.

 For $2 \Rightarrow 1$, if $G$ is virtually finitely generated free then $G$ acts properly and cocompactly on a tree, so $G$ is $\uFP$ over $R$ with $\ucd_R G = 1$.  As $G$ is $\uFP$, for any finite subgroup $K$, the normaliser $N_G K$ is finitely generated.  Subgroups of virtually-free groups are virtually-free, so $N_G K$ is virtually finitely generated free, in particular:
\[ H^i(WK, \ZZ[WK]) = H^i(N_GK, \ZZ[N_G K]) = \left\{ \begin{array}{l l} \text{$\ZZ$-flat} & \text{ for }i = n_K \\ 0 & \text{ else.} \end{array} \right. \]
where $n_K = 0$ or $1$.  Thus $G$ is Bredon duality over $\ZZ$ and hence also over $R$.
\end{proof}

\begin{Remark}\label{remark:duality uD1}
 The only place that the condition $G$ be $R$-torsion-free was used was in the implication $1 \Rightarrow 2$, the problem for groups which are not $R$-torsion-free is that the condition $\ucd_R G \le 1$ is not known to imply that $G$ acts properly on a tree.  If we take $R=\ZZ$ then $\ucd_\ZZ G \le 1$ implies $G$ acts properly on a tree by a result of Dunwoody \cite{Dunwoody-AccessabilityAndGroupsOfCohomologicalDimensionOne}.  We conclude that over $\ZZ$, $G$ is Bredon duality of dimension $1$ if and only $G$ is finitely generated virtually free, if and only if $G$ is virtually duality of dimension $1$.
\end{Remark}

\begin{Question}\label{question:duality D1 not Rtf}
 What characterises Bredon-duality groups of dimension $1$ over $R$?
\end{Question}

\begin{Prop}\label{prop:duality uPD1 Rtf}
 If $G$ is infinite then the following are equivalent:
\begin{enumerate}
 \item $G$ is Bredon--Poincar\'e duality over $R$, of dimension $1$.
 \item $G$ is virtually infinite cyclic.
 \item $G$ is virtually Poincar\'e duality over $R$, of dimension $1$. 
\end{enumerate}
\end{Prop}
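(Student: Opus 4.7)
The plan is to establish the cyclic chain $(2) \Rightarrow (1) \Rightarrow (3) \Rightarrow (2)$.

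For $(2) \Rightarrow (1)$, if $G$ is virtually infinite cyclic then it acts properly and cocompactly on $\RR$, with every finite subgroup acting as the identity or as the reflection $x \mapsto -x$. The fixed sets of finite subgroups are thus contractible, so $\RR$ is a cocompact $1$-dimensional model for $\uE G$, whence $G$ is $\uFP$ over $R$ with $\ucd_R G = 1$. For any finite subgroup $F$ the normaliser $N_G F$ is a subgroup of $G$, hence finite or virtually infinite cyclic, and the same holds for the Weyl group $W_G F$. Reducing to a finite-index copy of $\ZZ$ (or the trivial subgroup) via \cite[III.(6.5)]{Brown}, $H^i(W_G F, R[W_G F])$ equals $R$ in exactly one degree $n_F \in \{0,1\}$ and vanishes otherwise, so $G$ is Bredon--Poincar\'e duality of dimension $1$ over $R$. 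The implication $(1) \Rightarrow (3)$ is nearly formal: taking $H = 1$ in the Bredon--Poincar\'e duality hypothesis yields $H^{n_1}(G, RG) \cong R$ with all other cohomology vanishing; Lemma \ref{Lemma:observations} gives $n_1 \leq 1$, and $G$ being infinite forces $H^0(G, RG) = 0$, so $n_1 = 1$. Hence $G$ is itself Poincar\'e duality over $R$ of dimension $1$, and trivially virtually so.

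The heart of the argument is $(3) \Rightarrow (2)$. Choose a finite-index subgroup $H \leq G$ which is Poincar\'e duality of dimension $1$ over $R$. Since $\cd_R F \in \{0,\infty\}$ for any finite group $F$ and $\cd_R H = 1$, every finite subgroup of $H$ has order invertible in $R$, so $H$ is $R$-torsion-free. Being trivially virtually duality of dimension $1$, Proposition \ref{prop:duality uD1 Rtf} applies to give that $H$ is finitely generated and virtually-free. Choose a free subgroup $K \leq H$ of finite index; by \cite[III.(6.5)]{Brown} we have $H^1(K, RK) \cong H^1(H, RH) \cong R$. Writing $K \cong F_n$, the standard free resolution $0 \to (RF_n)^n \to RF_n \to R \to 0$ identifies $H^1(F_n, RF_n)$ with the cokernel of the map $t \mapsto ((x_i - 1) t)_i$. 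For $n = 1$ this cokernel is $R$; however, for $n \geq 2$ the cosets of the elements $(w, 0, \ldots, 0)$, as $w$ ranges over reduced words in $F_n$ beginning with $x_1$, are $R$-linearly independent in the cokernel, because solving $(x_2 - 1)t = 0$ already forces $t = 0$ in $RF_n$ (any $\langle x_2 \rangle$-invariant element would be constant on the infinite $\langle x_2 \rangle$-orbits and so vanishes by finite support). Hence $n = 1$, $K \cong \ZZ$, and $G$ is virtually infinite cyclic.

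The principal obstacle is this final cohomological computation ruling out $n \geq 2$: over $R = \ZZ$ it is immediate from Stallings's ends theorem and the classification of two-ended groups, but over an arbitrary commutative ring $R$ the direct argument sketched above (or an analogous end-theoretic statement valid over $R$) is required.
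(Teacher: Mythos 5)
Your proof is essentially correct and the chain $(2)\Rightarrow(1)\Rightarrow(3)\Rightarrow(2)$ works, but it takes a genuinely different and considerably longer route than the paper's. The paper's entire proof is a single sentence: for a finitely generated group $G$, one has $G$ virtually infinite cyclic if and only if $H^1(G, RG) \cong R$ (citing Geoghegan [13.5.5, 13.5.9], which is the theory of ends over a general coefficient ring). Your remark at the end correctly identifies this as the key ``end-theoretic statement valid over $R$''; it is already in the literature, so the detour in your $(3)\Rightarrow(2)$ through $R$-torsion-freeness, Proposition~\ref{prop:duality uD1 Rtf}, virtual freeness, and a hands-on computation of $H^1(F_n, RF_n)$ is all avoidable. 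That said, the hands-on computation is correct: $(x_2-1)$ acts injectively on $RF_n$ because $RF_n$ is free over $R\langle x_2\rangle$, so any dependence forces $m = 0$ and hence trivial coefficients. One small citation slip: in $(1)\Rightarrow(3)$ you invoke Lemma~\ref{Lemma:observations} for $n_1 \le 1$, but both clauses of that lemma require either $R = \ZZ$ or $G$ to be $R$-torsion-free, neither of which you know at that point. The inequality $n_1 \le \ucd_R G$ for arbitrary $R$ is instead a consequence of the unnamed proposition immediately following it, which gives $n_1 = \Gcd G = \Fcd G \le \ucd_R G$ for any Bredon duality group over $R$. With that citation corrected, the argument is sound; it is simply more elaborate than necessary given what Geoghegan already supplies.
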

\begin{proof}
  The equivalence follows from the fact that for $G$ a finitely generated group, $G$ is virtually infinite cyclic if and only if $H^1(G, RG) \cong R$ \cite[13.5.5,13.5.9]{Geoghegan}.
\end{proof}

In dimension $2$ we can only classify Bredon--Poincar\'e duality groups over $\ZZ$.

The following result appears in \cite[Example 5.7]{MartinezPerez-EulerClassesAndBredonForRestrictedFamilies}, but a proof is not given there.

\begin{Lemma}\label{lemma:duality vsurface is uPD2}
 If $G$ is virtually a surface group then $G$ is Bredon--Poincar\'e duality.
\end{Lemma}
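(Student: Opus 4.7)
The plan is to realise $G$ geometrically as a cocompact discrete group of isometries of a $2$-dimensional constant curvature model, then apply the smooth actions machinery reviewed at the start of Section \ref{subsection:duality smooth actions on manifolds}.

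First, I would pick a normal finite-index subgroup $\Sigma \trianglelefteq G$ isomorphic to $\pi_1(S)$ for some closed surface $S$, and split by the sign of the Euler characteristic. If $S$ is $S^2$ or $\RR P^2$, then $\Sigma$ and hence $G$ is finite, and the conclusion follows from Lemma \ref{lemma:duality D0}. Otherwise $S$ is aspherical and carries a complete Riemannian metric of constant curvature $0$ (the torus or Klein bottle) or $-1$ (every other case), so $\Sigma$ acts freely, properly, cocompactly, and by isometries on the universal cover $X$, which is $\RR^2$ or $\mathbb{H}^2$ respectively.

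The main step, and the main obstacle, is to upgrade this to a proper cocompact isometric action of the whole of $G$ on $X$. In the flat case this is immediate from Bieberbach's theorem, since $G$ is then virtually $\ZZ^2$. In the hyperbolic case I would argue as follows: since $\Sigma$ is centreless, the finite quotient $Q = G/\Sigma$ embeds in $\mathrm{Out}(\Sigma) \cong \mathrm{MCG}(S)$, and Kerckhoff's solution of the Nielsen realisation problem exhibits $Q$ as a group of isometries of some hyperbolic structure on $S$. Lifting through the universal cover produces an extension of $Q$ by $\Sigma$ sitting inside $\mathrm{Isom}(\mathbb{H}^2)$ as a cocompact lattice; the vanishing of $H^2(Q; Z(\Sigma))$ (because $\Sigma$ is centreless) forces this extension to be isomorphic to $G$, identifying $G$ with the lattice in question.

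Once $G$ is known to act smoothly, properly, and cocompactly on $X$ by isometries, the action is locally linear and Lemma \ref{Lemma:proper and locally linear => fixed points are submanifolds} guarantees that $X^H$ is a submanifold of $X$ for every finite $H \leq G$. This exhibits $X$ as a cocompact manifold model for $\uE G$ in which all finite-subgroup fixed sets are submanifolds, so the discussion at the start of Section \ref{subsection:duality smooth actions on manifolds} immediately yields that $G$ is Bredon--Poincar\'e duality.
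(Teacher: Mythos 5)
Your geometric strategy is in the same spirit as the paper's---both produce a constant-curvature manifold model for $\uE G$ and invoke Lemma \ref{Lemma:proper and locally linear => fixed points are submanifolds}---but the paper simply cites Mislin \cite[Lemmas 4.3, 4.4]{Mislin-ClassifyingSpacesProperActionsMappingClassGroups} for the existence of the model, while you try to re-derive it from Kerckhoff's Nielsen realisation theorem. There is a genuine gap in the step where you upgrade the $\Sigma$-action to a $G$-action. You assert that because $\Sigma$ is centreless, $Q = G/\Sigma$ embeds in $\text{Out}(\Sigma)$, and hence that $G$ itself sits as a cocompact lattice in $\text{Isom}(\mathbb{H}^2)$. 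Neither claim holds in general: centrelessness of $\Sigma$ gives $C_G(\Sigma)\cap\Sigma = 1$ but not $C_G(\Sigma)=1$, and the kernel of $G\to\text{Out}(\Sigma)$ is $\Sigma\cdot C_G(\Sigma)$, strictly larger than $\Sigma$ whenever $C_G(\Sigma)\neq 1$. The group $G=\Sigma\times\ZZ/2$, which is certainly virtually a surface group, is a counterexample: the map $Q\to\text{Out}(\Sigma)$ is trivial, and $G$ admits no faithful proper cocompact action on $\mathbb{H}^2$ at all, since any cocompact lattice in $\text{Isom}(\mathbb{H}^2)$ has a self-centralising surface subgroup. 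The flat case suffers analogously: $\ZZ^2\times\ZZ/2$ is virtually $\ZZ^2$ but is not a crystallographic group, so Bieberbach's theorem produces no action.

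The repair is that you only need a proper cocompact locally linear $G$-action on the model, not a faithful one. In the hyperbolic case set $K=C_G(\Sigma)$; this is a finite normal subgroup of $G$, and $(G/K)\big/(\Sigma K/K)$ does embed in $\text{Out}(\Sigma)$, so your Nielsen-realisation argument realises $G/K$ as a cocompact lattice in $\text{Isom}(\mathbb{H}^2)$. Pulling this action back along $G\twoheadrightarrow G/K$ gives a proper cocompact locally linear $G$-action on $\mathbb{H}^2$ which is still a model for $\uE G$ (stabilisers merely acquire a factor of $K$), and Lemma \ref{Lemma:proper and locally linear => fixed points are submanifolds} then applies as you intended; alternatively, conclude directly from Proposition \ref{prop:finite-by-uDn is uDn}, since $G$ is finite-by-$(G/K)$ and $G/K$ is Bredon--Poincar\'e duality by your argument. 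For the flat case the same reduction works, or one can simply appeal to Proposition \ref{prop:bredon equivalent to soluble virtual duality}, as $G$ is there virtually polycyclic. With these repairs your route is a valid, self-contained alternative to the paper's appeal to Mislin.
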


\begin{proof}
As $G$ is a virtual surface group, $G$ has finite index subgroup $H$ with $H$ the fundamental group of some closed surface.  Firstly, assume $H = \pi_1(S_g)$ where $S_g$ is the orientable surface of genus $g$.  If $g = 0$ then $S_g$ is the $2$-sphere and $G$ is a finite group, thus $G$ is Bredon--Poincar\'e duality by Lemma \ref{lemma:duality D0}.  We now treat the cases $g = 1$ and $g>1$ separately. 
If $g > 0$ then by \cite[Lemma 4.4(b)]{Mislin-ClassifyingSpacesProperActionsMappingClassGroups} $G$ is $\uFP$ over $\ZZ$ with $\ucd_\ZZ G \le 2$.  If $g > 1$ then, in the same lemma, Mislin shows that the upper half-plane is a model for $\uE G$ with $G$ acting by hyperbolic isometries.  Giving the upper half plane the structure of a Riemannian manifold with the Poincar\'e metric, this action is by isometries and \cite[10.1]{Davis} gives that the fixed point sets are all submanifolds, hence $G$ is Bredon--Poincar\'e duality of dimension $2$.  If $g=1$ then by \cite[Lemma 4.3]{Mislin-ClassifyingSpacesProperActionsMappingClassGroups}, $G$ acts by affine maps on $\RR^2$ so again $\RR^2$ is a model for $\uE G$ whose fixed point sets are submanifolds, and thus $G$ is Bredon--Poincar\'e duality of dimension $2$.  
We conclude that \emph{orientable} virtual Poincar\'e duality groups of dimension $2$ are Bredon--Poincar\'e duality of dimension $2$.

Now we treat the non-orientable case, so $H = \pi_1(T_k)$ where $T_k$ is a closed non-orientable surface of genus $k$.  In particular $T_k$ has Euler characteristic $\chi(T_k) = 2-k$.  $H$ has an index $2$ subgroup $H^\prime$ isomorphic to the fundamental group of the closed orientable surface of Euler characteristic $2\chi(S)$, thus $H^\prime = \pi_1(S_{k-1})$ the closed orientable surface of genus $k-1$.  If $k = 1$ then $H = \ZZ/2$ and $G$ is a finite group, thus Bredon--Poincar\'e duality by Lemma \ref{lemma:duality D0}.  
Assume then that $k > 1$, we are now back in the situation above where $G$ is virtually $S_g $ for $g > 0$ and as such $G$ is Bredon--Poincar\'e duality of dimension $n$, by the previous part of the proof.
\end{proof}

\begin{Prop}\label{lemma:duality vPD2ZZ iff uPD2ZZ}
The following conditions are equivalent:
\begin{enumerate}
 \item $G$ is virtually Poincar\'e duality of dimension $2$ over $\ZZ$.
 \item $G$ is virtually surface.
 \item $G$ is Bredon--Poincar\'e duality of dimension $2$ over $\ZZ$.
\end{enumerate}
\end{Prop}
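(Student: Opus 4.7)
The proposition is a three-way equivalence. The implication (2) $\Rightarrow$ (3) is exactly Lemma \ref{lemma:duality vsurface is uPD2}, and the equivalence of (1) and (2) is a consequence of Eckmann's theorem \cite{Eckmann-PoincareDualityGroupsOfDimensionTwoAreSurfaceGroups}: any torsion-free PD$^2$ group over $\ZZ$ is a closed surface group, so a torsion-free finite-index subgroup of $G$ is a surface group if and only if it is PD$^2$ over $\ZZ$. Thus the substantive direction is (3) $\Rightarrow$ (1).

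I would first pin down $n_1$. By the Bredon--Poincar\'e duality hypothesis applied to the trivial subgroup we have $H^{n_1}(G, \ZZ G) \cong \ZZ$ and $H^i(G, \ZZ G) = 0$ for $i \ne n_1$, while $n_1 \le \ucd G = 2$ by Lemma \ref{Lemma:observations}(1). The case $n_1 = 0$ would force $G$ to be finite (so $\ucd G = 0$), and $n_1 = 1$ would force $G$ to be two-ended (by \cite[13.5.5]{Geoghegan}) and hence virtually infinite cyclic with $\ucd G \le 1$; both contradict $\ucd G = 2$. So $n_1 = 2$ and $H^\ast(G, \ZZ G)$ is concentrated in degree $2$ with value $\ZZ$.

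The remaining task is to produce a torsion-free finite-index subgroup $G_0$ of $G$. Once $G_0$ is in hand, the proof of Lemma \ref{lemma:Rtf then cdRG le ucdRG} shows $G_0$ is $\FP$ over $\ZZ$ with $\cd_\ZZ G_0 \le 2$, Shapiro's lemma \cite[III.(6.5)]{Brown} gives $H^i(G_0, \ZZ G_0) \cong H^i(G, \ZZ G)$, and therefore $G_0$ is PD$^2$ over $\ZZ$, hence a closed surface group by Eckmann---which is precisely condition (1). To produce $G_0$, my plan is to pass first to $\QQ$-coefficients: every group is automatically $\QQ$-torsion-free, so Lemma \ref{lemma:uFPn over Z then over R} together with the proof of Lemma \ref{lemma:Rtf then cdRG le ucdRG} makes $G$ of type $\FP$ over $\QQ$ with $\cd_\QQ G \le 2$, and the tensoring argument of Lemma \ref{lemma:uDn over Z then uDn over R} upgrades the cohomology to $H^2(G, \QQ G) \cong \QQ$ with vanishing in other degrees, so $G$ is PD$^2$ over $\QQ$. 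I would then invoke the known classification of PD$^2$ groups (originating with Eckmann and extended by Bowditch) to conclude $G$ is virtually a closed surface group. I expect this last step, transferring the rational PD$^2$ property to virtual surface structure, to be the main obstacle; an alternative route is a direct argument exploiting that the Weyl groups of all non-trivial finite subgroups of $G$ are themselves PD groups of dimension at most $2$ (in particular FP$_\infty$ with strongly restricted cohomology), which should constrain the torsion of $G$ enough to construct $G_0$ directly from the Bredon $\uFP$ hypothesis without appealing to a classification theorem.
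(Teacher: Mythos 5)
Your primary route is the paper's route, and the classification theorem you are unsure about does exist and is precisely what the paper cites: by \cite[Theorem 0.1]{Bowditch-PlanarGroupsAndTheSeifertConjecture}, any group of type $\FP_2$ with $H^2(G,\QQ G)\cong\QQ$ is a virtual surface group. So you do not even need the full strength of ``PD$^2$ over $\QQ$''---once you know $G$ is $\FP_2$ (which follows from $\uFP$ via Lemma~\ref{lemma:OFFPn characterisation}) and that $H^2(G,\QQ G)\cong\QQ$, Bowditch gives (2) directly, and then (1) follows from (1)$\Leftrightarrow$(2) without any need to first construct a torsion-free finite-index subgroup $G_0$; your plan of producing $G_0$ and then verifying PD$^2$ for it inverts the logical order and is unnecessary. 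Your alternative route (extracting $G_0$ directly from constraints on the Weyl groups) is likewise not needed and would be considerably harder.

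One genuine improvement in your write-up over the paper's: the paper asserts $H^2(G,\QQ G)\cong H^2(G,\ZZ G)\otimes\QQ\cong\QQ$ without explicitly ruling out $n_1<2$, whereas Lemma~\ref{Lemma:observations} only gives $n_1\le\ucd G=2$. Your elimination of $n_1=0$ (which forces $G$ finite, so $\ucd G=0$) and $n_1=1$ (which forces $G$ virtually infinite cyclic by \cite[13.5.5]{Geoghegan}, so $\ucd G\le 1$) is exactly what is needed to make the degree match and should be kept. With $n_1=2$ in hand, the flatness of $H^3(G,\ZZ G)=0$ and \cite[Corollary 3.6]{Bieri-HomDimOfDiscreteGroups} give $H^2(G,\QQ G)\cong\QQ$, Bowditch applies, and the proof closes exactly as in the paper.
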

\begin{proof}
 That $1 \Leftrightarrow 2$ is \cite{Eckmann-PoincareDualityGroupsOfDimensionTwoAreSurfaceGroups} and that  $2 \Rightarrow 3$ is Lemma \ref{lemma:duality vsurface is uPD2}.  The implication $3 \Rightarrow 2$ is provided by \cite[Theorem 0.1]{Bowditch-PlanarGroupsAndTheSeifertConjecture} which states that any $\FP_2$ group with $H^2(G, \QQ G) = \QQ$ is a virtual surface group and hence a virtual Poincar\'e duality group.  If $G$ is Bredon--Poincar\'e duality of dimension $2$ then $H^i(G, \QQ G) = H^i(G, \ZZ G) \otimes \QQ = \QQ$ and $G$ is $\FP_2$ so we may apply the aforementioned theorem.
\end{proof}

The above proposition doesn't extend from Poincar\'e duality to just duality, as demonstrated by \cite[p.163]{Schneebeli-VirtualPropertiesAndGroupExtensions} where an example, based on Higman's group, is given of a Bredon duality group of dimension $2$ over $\ZZ$ which is not virtual duality.  This example is extension of a finite group by a torsion-free duality group of dimension $2$.  In the theorem it is proved that the group is not virtually torsion-free, that it is Bredon duality follows from Proposition \ref{prop:finite-by-uDn is uDn}.

\begin{Question}
  Is there an easy characterisation of Bredon duality, or Bredon--Poincar\'e duality groups, of dimension $2$ over $R$?
\end{Question}

\section{Extensions}\label{subsection:duality extensions}

In the classical case, extensions of duality groups by duality groups are always duality \cite[9.10]{Bieri-HomDimOfDiscreteGroups}.  In the Bredon case the situation is more complex, for example semi-direct products of torsion-free groups by finite groups may not even be $\uFP_0$ \cite{LearyNucinkis-SomeGroupsOfTypeVF}.  Davis and Leary build examples of finite index extensions of Poincar\'e duality groups which are not Bredon duality, although they are $\uFP_\infty$ \cite[Theorem 2]{DavisLeary-DiscreteGroupActionsOnAsphericalManifolds}, and examples of virtual duality groups which are not of type $\uFP_\infty$ \cite[Theorem 1]{DavisLeary-DiscreteGroupActionsOnAsphericalManifolds}.  In \cite{FarrellLafont-FiniteAutomorphismsOfNegCurvedPDGroups}, Farrell and Lafont give examples of prime index extensions of $\delta$-hyperbolic Poincar\'e duality groups which are not Bredon--Poincar\'e duality.  In \cite[\S 5]{MartinezPerez-EulerClassesAndBredonForRestrictedFamilies},
 Martinez-Perez considers $p$-power extensions of duality groups over fields of characteristic $p$, showing that if $Q$ is a $p$-group and $G$ is Poincar\'e duality of dimension $n$ over a field of characteristic $p$ then then $G \rtimes Q$ is Bredon--Poincar\'e duality of dimension $n$.  These results do not extend from Poincar\'e duality groups to duality groups however \cite[\S 6]{MartinezPerez-EulerClassesAndBredonForRestrictedFamilies}.

 The only really tractable cases are direct products of two Bredon duality groups and extensions of the form finite-by-Bredon duality.

\subsection{Direct Products}
\begin{Lemma}\label{lemma:bredon G uFP and H uFP then GxH is uFP}
If $G_1$ and $G_2$ are $\uFP$ over $R$ then $G_1 \times G_2$ is $\uFP$ over $R$ and
\[\ucd_R G_1 \times G_2 \le \ucd_R G_1 + \ucd_R G_2\]
\end{Lemma}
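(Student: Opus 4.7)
The plan is to construct, in a single step, a finitely generated projective Bredon resolution of $\uR$ over $G_1\times G_2$ of length $n_1+n_2$, where $n_i=\ucd_R G_i$; this simultaneously yields the $\uFP$ property and the dimension bound. The main tool is an external tensor product $\widehat\otimes$ taking a Bredon module $M_1$ over $G_1$ and a Bredon module $M_2$ over $G_2$ to a Bredon module over $G_1\times G_2$. The projections $\pi_i\colon G_1\times G_2\to G_i$ induce a functor
\[
\Phi\colon\mathcal{O}_\mathfrak{F}(G_1\times G_2)\longrightarrow\mathcal{O}_\mathfrak{F}G_1\times\mathcal{O}_\mathfrak{F}G_2,\qquad (G_1\times G_2)/L\longmapsto(G_1/\pi_1 L,\,G_2/\pi_2 L),
\]
and I set $(M_1\widehat\otimes M_2)((G_1\times G_2)/L)=M_1(G_1/\pi_1 L)\otimes_R M_2(G_2/\pi_2 L)$, with contravariant functoriality pulled back along $\Phi$.

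The crux of the proof is the identity $R[-,G_1/H]\widehat\otimes R[-,G_2/K]\cong R[-,(G_1\times G_2)/(H\times K)]$. This is checked by unwinding the orbit-category morphism set: a $G_1\times G_2$-equivariant map $(G_1\times G_2)/L\to (G_1\times G_2)/(H\times K)$ amounts to a pair of cosets $(g_1H,g_2K)$ with $g_i^{-1}(\pi_i L)g_i\le H,K$ respectively, and such data is plainly the same as a pair of morphisms $G_1/\pi_1 L\to G_1/H$ and $G_2/\pi_2 L\to G_2/K$ in the respective orbit categories. Additivity of $\widehat\otimes$ in each variable then ensures that it sends finitely generated projective Bredon modules over $G_1$ and $G_2$ to finitely generated projective Bredon modules over $G_1\times G_2$.

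Next I take finitely generated projective resolutions $P_\bullet\to\uR$ of length $n_1$ over $G_1$ and $Q_\bullet\to\uR$ of length $n_2$ over $G_2$, and consider the total complex $T_\bullet=\operatorname{Tot}(P_\bullet\widehat\otimes Q_\bullet)$, augmented by the canonical isomorphism $\uR\widehat\otimes\uR\cong\uR$. By the previous paragraph, $T_\bullet$ is a length-$(n_1+n_2)$ complex of finitely generated projective Bredon modules over $G_1\times G_2$. To verify exactness I evaluate at an arbitrary $(G_1\times G_2)/L$: this returns the total complex of $P_\bullet(G_1/\pi_1 L)\otimes_R Q_\bullet(G_2/\pi_2 L)$. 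Since a projective Bredon module takes projective, hence $R$-flat, $RWH$-modules as values at every orbit $G/H$, each factor complex is an $R$-flat resolution of $R$, and the classical Künneth theorem then identifies the total complex as a resolution of $R\otimes_R R=R$. Thus $T_\bullet$ is the desired projective resolution of $\uR$.

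The principal technical obstacle is the representability calculation in the second paragraph: it requires a careful analysis of morphisms in the orbit category of a product of groups, after which everything reduces to formal homological algebra and a pointwise Künneth argument. A purely geometric proof using a product of models for $\uE G_i$ does not give the sharp bound, because of the possible ``$+1$'' gap between $\ucd$ and $\ugd$ allowed by L\"uck--Meintrup; the algebraic approach above sidesteps this issue.
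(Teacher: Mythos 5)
Your argument uses the same construction the paper cites from Fluch's thesis \cite{Fluch}: form the external tensor product of projective Bredon resolutions for the two factors, observe that representables go to representables, and take the total complex. You have just written out the details instead of citing, and the identification $R[-,G_1/H]\,\widehat\otimes\, R[-,G_2/K]\cong R[-,(G_1\times G_2)/(H\times K)]$, together with the observation that $(g_1,g_2)^{-1}L(g_1,g_2)\le H\times K$ is equivalent to the two projection conditions $g_i^{-1}(\pi_i L)g_i\le H$ (resp.\ $\le K$), is exactly the right key computation.

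One justification in the exactness step is incorrect as stated: a projective (contravariant) Bredon module does \emph{not} in general take projective $R[WH]$-module values. For instance $R[-,G_1/K]$ evaluated at $G_1/1$ is the permutation module $R[G_1/K]$ over $R[W1]=RG_1$, and this is not $RG_1$-projective unless $\lvert K\rvert$ is invertible in $R$; Lemma~\ref{lemma:OFFPn characterisation} records only that such values are $\FP_\infty$ over the Weyl group, which is strictly weaker. What your argument actually needs is merely that $P_\bullet(G_1/\pi_1 L)$ and $Q_\bullet(G_2/\pi_2 L)$ are complexes of $R$-flat modules, and that is true for a simpler reason: each $R[G_1/A,G_1/B]$ is by definition a free $R$-module on a set of $G_1$-maps, so every projective Bredon module takes values which are $R$-module direct summands of free $R$-modules, hence $R$-projective and a fortiori $R$-flat. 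With that repair the rest of your argument (a first-quadrant spectral sequence for the double complex, or equivalently the fact that tensoring a bounded-below flat resolution of $R$ with a bounded-below complex preserves homology) goes through and the proof is complete, giving the sharp bound $\ucd_R(G_1\times G_2)\le\ucd_R G_1+\ucd_R G_2$ as well as the $\uFP$ property.
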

\begin{proof}
That $\ucd_R G_1 \times G_2 \le \ucd_R G_1 + \ucd_R G_2$ is a special case of \cite[3.59]{Fluch}, the proof used involves showing that given projective resolutions $P_*$ of $\uR$ by Bredon modules for $G_1$ and $Q_*$ of $\uR$ by Bredon modules for $G_2$, the total complex of the tensor product double complex is a projective resolution of $\uR$ by projective  Bredon modules for $G_1 \times G_2$ \cite[3.54]{Fluch}.  So to prove that $G_1 \times G_2$ is $\uFP$ it is sufficient to show that if $P_*$ and $Q_*$ are finite type resolutions, then so is the total complex, but this follows from \cite[3.49]{Fluch}.
\end{proof}

\begin{Lemma}
 If $L$ is a finite subgroup of $G_1 \times G_2$ then the normaliser $N_{G_1 \times G_2} L$ is finite index in $N_{G_1} \pi_1L \times N_{G_2} \pi_2 L$, where $\pi_1$ and $\pi_2$ are the projection maps from $G_1 \times G_2$ onto the factors $G_1$ and $G_2$.
\end{Lemma}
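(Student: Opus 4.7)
Write $L_1 = \pi_1 L$ and $L_2 = \pi_2 L$, both finite subgroups, and note $L \le L_1 \times L_2$. The plan has two steps: first show the containment
\[ N_{G_1 \times G_2}(L) \;\le\; N_{G_1}(L_1) \times N_{G_2}(L_2), \]
and then show this containment is of finite index.

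For the containment, observe that if $(g_1,g_2) \in G_1 \times G_2$ normalises $L$, then applying the projection $\pi_i$ (which is a homomorphism) to the equation $(g_1,g_2) L (g_1,g_2)^{-1} = L$ gives $g_i L_i g_i^{-1} = L_i$; hence $g_i \in N_{G_i}(L_i)$. This is a short routine check.

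For the finite index, I would use the conjugation action of the group $N_{G_1}(L_1) \times N_{G_2}(L_2)$ on the set of subgroups of $L_1 \times L_2$. The crucial observation is that this action is well defined because $N_{G_1}(L_1) \times N_{G_2}(L_2)$ normalises $L_1 \times L_2$, so conjugates of $L$ by its elements remain subgroups of the finite group $L_1 \times L_2$. Since $L_1 \times L_2$ is finite it has only finitely many subgroups, so the orbit of $L$ is finite. The stabiliser of $L$ under this action is exactly $N_{G_1 \times G_2}(L) \cap \bigl(N_{G_1}(L_1) \times N_{G_2}(L_2)\bigr)$, which by the first step equals $N_{G_1 \times G_2}(L)$. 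The orbit-stabiliser theorem then gives the desired finite index.

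There is no real obstacle here; both steps are elementary once one sets up the action on subgroups of the finite group $L_1 \times L_2$. The only thing to be slightly careful about is verifying that the conjugation action is well defined on the set of subgroups of $L_1\times L_2$, i.e.\ checking that $N_{G_1}(L_1) \times N_{G_2}(L_2)$ really does normalise $L_1 \times L_2$, which is immediate from the definitions.
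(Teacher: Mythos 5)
Your proof is correct and takes essentially the same approach as the paper: both arguments first establish the containment $N_{G_1 \times G_2}(L) \le N_{G_1}(\pi_1 L) \times N_{G_2}(\pi_2 L)$ and then apply orbit-stabiliser to a conjugation action whose orbit is finite because everything lives inside the finite group $\pi_1 L \times \pi_2 L$. The only difference is cosmetic: you make the action explicit (conjugation on the set of subgroups of $\pi_1 L \times \pi_2 L$), whereas the paper phrases it somewhat loosely as an action on $(\pi_1 L \times \pi_2 L)/L$; your version is the cleaner way to say the same thing.
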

\begin{proof}
 It's straightforward to check that
 \[N_{G_1 \times G_2} L \le N_{G_1} \pi_1L \times N_{G_2} \pi_2 L\]
 To see it is finite index, observe that $N_{G_1} \pi_1L \times N_{G_2} \pi_2 L$ acts by conjugation on $(\pi_1 L \times \pi_2 L)/L$, but this set is finite so the stabiliser of $L$, which is exactly $N_{G_1 \times G_2} L $, is finite index in $N_{G_1} \pi_1L \times N_{G_2} \pi_2 L$.
\end{proof}

\begin{Prop}\label{prop:duality direct products of BPD groups}
 If $G_1$ and $G_2$ are Bredon duality (resp. Bredon--Poincar\'e duality), then $G \cong G_1 \times G_2$ is Bredon duality (resp. Bredon--Poincar\'e duality).  Furthermore
\[\mathcal{V}(G_1 \times G_2) = \big\{ v_1 + v_2  :  v_1 \in \mathcal{V}(G_1)\cup \{n_1(G_1) \} \text{ and } v_2 \in \mathcal{V}(G_2)\cup \{n_1(G_2) \} \big\} \]
\end{Prop}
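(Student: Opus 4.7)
The plan is to reduce Weyl-group cohomology in $G := G_1 \times G_2$ to a K\"unneth computation across the two factors. That $G$ is $\uFP$ over $R$ is Lemma~\ref{lemma:bredon G uFP and H uFP then GxH is uFP}, so only the duality condition on each Weyl group $W_G L$ and the description of $\mathcal{V}(G)$ remain.

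Fix a finite subgroup $L \le G$ and write $H_i := \pi_i L \le G_i$ for the images under the two projections. The lemma immediately preceding the proposition identifies $N_G L$ as a finite-index subgroup of $N_{G_1} H_1 \times N_{G_2} H_2$. I would combine this with \cite[III.(6.5)]{Brown} and the finite extension $L \hookrightarrow N_G L \twoheadrightarrow W_G L$ (using \cite[Proposition 2.7]{Bieri-HomDimOfDiscreteGroups}) to obtain
\[H^*(W_G L, R[W_G L]) \cong H^*(N_{G_1} H_1 \times N_{G_2} H_2, R[N_{G_1} H_1 \times N_{G_2} H_2]).\]
Since each $G_i$ is Bredon duality and $\uFP$, each $N_{G_i} H_i$ is $\FP_\infty$ and duality over $R$ of dimension $n_{H_i}(G_i)$, with the convention $n_{\{1\}}(G_i) = n_1(G_i)$. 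Applying the classical direct-product result \cite[Theorem 9.10]{Bieri-HomDimOfDiscreteGroups}---equivalently, a K\"unneth argument on tensor products of finite-type projective resolutions, as in the proof of Lemma~\ref{lemma:bredon G uFP and H uFP then GxH is uFP}---this cohomology is concentrated in degree $n_{H_1}(G_1) + n_{H_2}(G_2)$, where it equals the tensor product $H^{n_{H_1}(G_1)}(N_{G_1} H_1, R[N_{G_1} H_1]) \otimes_R H^{n_{H_2}(G_2)}(N_{G_2} H_2, R[N_{G_2} H_2])$. In the Poincar\'e duality case both factors are $R$, so the tensor product is $R$; in the duality case both are $R$-flat, so the tensor product is $R$-flat.

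For the description of $\mathcal{V}(G)$, every pair $(H_1, H_2)$ of finite subgroups of $(G_1, G_2)$ is realised as $(\pi_1 L, \pi_2 L)$ by taking $L = H_1 \times H_2$, and conversely every finite $L \le G$ produces such a pair with $H_1, H_2$ both finite. The stated formula then follows directly from the previous paragraph. The main obstacle is the K\"unneth step: the $\FP_\infty$ property of each $N_{G_i} H_i$ together with $R$-flatness of its top cohomology are what ensure that no lower cohomology survives and that the top-degree cohomology of the product is exactly the tensor product of the two top-degree cohomologies.
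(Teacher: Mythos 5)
Your proof is correct and follows essentially the same route as the paper's: establish $\uFP$ from the product lemma, use the preceding normaliser lemma to reduce $H^*(W_G L, R[W_G L])$ via Shapiro and a finite-kernel argument to $H^*(N_{G_1}\pi_1 L \times N_{G_2}\pi_2 L, R[N_{G_1}\pi_1 L \times N_{G_2}\pi_2 L])$, and then apply a K\"unneth computation (where $R$-flatness kills the $\Tor_1$ term). The paper writes out the K\"unneth short exact sequence explicitly rather than invoking \cite[9.10]{Bieri-HomDimOfDiscreteGroups}, but as you note these amount to the same argument; your explicit verification of the $\mathcal{V}(G)$ formula via $L = H_1 \times H_2$ is a welcome addition that the paper leaves implicit.
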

\begin{proof}
 By Lemma \ref{lemma:bredon G uFP and H uFP then GxH is uFP}, $G \times H$ is $\uFP$.  If $L$ is some finite subgroup, the normaliser $N_G L$ is finite index in $N_{G_1} \pi_1L \times N_{G_2} \pi_2 L$ so an application of Shapiro's Lemma \cite[III.(6.5) p.73]{Brown} gives that for all $i$,
\[H^i(N_{G} L, R[N_{G} L] ) \cong H^n( N_{G_1} \pi_1L \times N_{G_2} \pi_2 L, R[N_{G_1} \pi_1L \times N_{G_2} \pi_2 L])\]
Noting the isomorphism of $RG$ modules 
\[ R[N_{G_1} \pi_1L \times N_{G_2} \pi_2 L] \cong R[N_{G_1} \pi_1L] \otimes R[N_{G_2} \pi_2 L] \]
The K\"unneth formula for group cohomology (see \cite[p.109]{Brown}) is
\[\xymatrix@-7pt{
0 \ar[d] \\
 \bigoplus_{i + j = k} \big( H^i(N_{G_1} \pi_1L, R[N_{G_1} \pi_1L]) \otimes H^j(N_{G_1} \pi_1L, R[N_{G_1} \pi_1L]) \big) \ar[d]\\
 H^k(G_1 \times G_2, R[N_{G_1} \pi_1L \times N_{G_2} \pi_2 L]) \ar[d] \\
 \bigoplus_{i + j = k+1} \Tor_1( H^i(G_1,R[N_{G_1} \pi_1L] ), H^j(G_2, R[N_{G_2} \pi_2L]) ) \ar[d] \\
0}\]
Note that here we are using that $R[N_{G_i} \pi_iL]$ is $R$-free.  Since $H^i(G_1,R[N_{G_1} \pi_1L] )$ is assumed $R$-flat the $\Tor_1$ term is zero.  Hence the central term is non-zero only when $i = n_{\pi_1L}$ and $j = n_{\pi_2L}$, in which case it is $R$-flat.  If $G_1$ and $G_2$ are Bredon--Poincar\'e duality then the central term in this case is $R$.
\end{proof}

\subsection{Finite-by-Duality Groups}

Throughout this section, $F$, $G$ and $Q$ will denote groups in a short exact sequence
\[0 \longrightarrow F \longrightarrow G \stackrel{\pi}{\longrightarrow} Q \longrightarrow 0\]
Where $F$ is finite.  This section builds up to the proof of Proposition \ref{prop:finite-by-uDn is uDn} that if $Q$ is Bredon duality of dimension $n$ over $R$, then $G$ is also.

\begin{Lemma}\label{lemma:Finite by Q implies HG is HQ}
$ H^i(G, R G) \cong H^i(Q, R Q ) $ for all $i$ and any ring $R$.
\end{Lemma}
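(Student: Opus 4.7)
The plan is to apply the Lyndon--Hochschild--Serre spectral sequence associated to the extension $F \to G \to Q$ with coefficients in $RG$, and show that it collapses to the $E_2$-column, yielding the desired isomorphism.

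First I would write down the spectral sequence
\[ E_2^{p,q} = H^p\bigl(Q, H^q(F, RG)\bigr) \Longrightarrow H^{p+q}(G, RG). \]
The key step is to show that $H^q(F, RG) = 0$ for all $q > 0$. Because $F$ is finite and $F \trianglelefteq G$, choosing a set-theoretic section of $\pi$ exhibits $RG$ as a free $RF$-module on a set of coset representatives of $F$ in $G$. Thus $RG$ is a direct sum of copies of $RF$ as an $RF$-module, and for a finite group $F$ one has $H^q(F, RF) \cong H^q(1, R)$ by Shapiro's lemma (induction and coinduction agree for finite subgroups), which is $R$ in degree zero and trivial otherwise.

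Next I would identify $H^0(F, RG) = (RG)^F$ as an $RQ$-module. The left $F$-invariants in $RG$ are spanned over $R$ by the orbit sums $\sum_{g' \in Fg} g'$, one for each right coset $Fg \in F\backslash G$. Since $F$ is normal these cosets are in bijection with $Q$, and the residual left $G$-action on $(RG)^F$ factors through $Q$ and simply translates orbit sums by left multiplication, so there is an isomorphism of $RQ$-modules $(RG)^F \cong RQ$. With $H^q(F, RG) = 0$ for $q > 0$, the spectral sequence degenerates at $E_2$ into a single column, giving
\[ H^i(G, RG) \cong H^i\bigl(Q, (RG)^F\bigr) \cong H^i(Q, RQ) \]
for every $i$.

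The only delicate point is the verification that the $Q$-module structure on $(RG)^F$ coming from the spectral sequence really is the regular one on $RQ$; this is a routine check once the bijection between $F$-orbits in $G$ and elements of $Q$ is set up correctly, using normality of $F$. Everything else is a direct application of standard homological machinery.
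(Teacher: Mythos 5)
Your proposal is correct and follows essentially the same route as the paper: both run the Lyndon--Hochschild--Serre spectral sequence for $F \to G \to Q$ with coefficients in $RG$, show $H^q(F,RG)$ vanishes in positive degrees and equals $RQ$ in degree zero, and conclude by collapse. The only cosmetic difference is in how the vanishing is justified: you invoke Shapiro's lemma and the coincidence of induction and coinduction for the finite group $F$ (together with the tacit fact that $H^q(F,-)$ commutes with direct sums since $F$ is of type $\FP_\infty$), while the paper cites Bieri's results to write $H^q(F,RG) \cong H^q(F,RF)\otimes_{RF}RG$ directly.
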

\begin{proof}

 The Lyndon-Hochschild-Serre spectral sequence associated to the extension is:
\[H^p(Q, H^q(F, R G) ) \underset{p}{\Rightarrow} H^{p+q}(G, R G)\]
$R G$ is projective as a $R F$-module so by \cite[Proposition 5.3, Lemma 5.7]{Bieri-HomDimOfDiscreteGroups}, 
\[H^q(F, R G) \cong H^q(F, R F) \otimes_{R F} R G = \left\{ \begin{array}{l l} R \otimes_{R F} R G = R Q & \text{ if } q = 0 \\ 0 & \text{ else.} \end{array} \right\} \]
 The spectral sequence collapses to $H^i(G, R G) \cong H^i(Q, R Q )$.
\end{proof}

\begin{Lemma}\label{lemma:Finite-by-uFP0 is uFP0}
 If $Q$ is $\uFP_0$, then $G$ is $\uFP_0$.
\end{Lemma}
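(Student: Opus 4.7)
The plan is to invoke Lemma \ref{lemma:OFFPn characterisation}(1), which characterises $\uFP_0$ as having finitely many conjugacy classes of finite subgroups. So it suffices to show that the number of $G$-conjugacy classes of finite subgroups of $G$ is finite, given that the number of $Q$-conjugacy classes of finite subgroups of $Q$ is finite.

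First, I would observe that if $H \le G$ is finite then $\pi(H)$ is a finite subgroup of $Q$. Let $\bar H_1, \ldots, \bar H_k$ be representatives for the finitely many conjugacy classes of finite subgroups of $Q$. Choose $q \in Q$ and an index $i$ with $q\,\pi(H)\,q^{-1} = \bar H_i$, and lift $q$ to some $g \in G$; then $\pi(g H g^{-1}) = \bar H_i$, so $g H g^{-1} \subseteq \pi^{-1}(\bar H_i)$. Thus every finite subgroup of $G$ is $G$-conjugate to a subgroup of $\pi^{-1}(\bar H_i)$ for some $i \in \{1, \ldots, k\}$.

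Second, since $F$ is finite and $\bar H_i$ is finite, the preimage $\pi^{-1}(\bar H_i)$ sits in an extension $1 \to F \to \pi^{-1}(\bar H_i) \to \bar H_i \to 1$ and is therefore a finite group of order $\lvert F\rvert \cdot \lvert \bar H_i \rvert$. In particular it has only finitely many subgroups. Taking the union over $i$ of all subgroups of $\pi^{-1}(\bar H_i)$ produces a finite collection of finite subgroups of $G$ meeting every $G$-conjugacy class. This yields the required finiteness, and hence $G$ is $\uFP_0$.

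No real obstacle appears; the argument is purely combinatorial once one translates $\uFP_0$ via Lemma \ref{lemma:OFFPn characterisation}. The only point that requires any care is the lifting step, ensuring that the conjugation witnessing $\pi(H) \sim \bar H_i$ in $Q$ can be realised by conjugation in $G$, which is immediate from surjectivity of $\pi$.
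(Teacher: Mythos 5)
Your proof is correct and follows essentially the same line as the paper's: translate $\uFP_0$ via Lemma~\ref{lemma:OFFPn characterisation}(1), lift the $Q$-conjugacy witnessing $\pi(H)\sim \bar H_i$ to a $G$-conjugacy, and observe that $\pi^{-1}(\bar H_i)$ is finite and hence has only finitely many subgroups. The only cosmetic difference is that the paper chooses explicit conjugacy class representatives $B_i^j$ among subgroups projecting onto each $B_i$, whereas you appeal directly to the finiteness of the collection of all subgroups of the $\pi^{-1}(\bar H_i)$; the substance is identical.
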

\begin{proof}
Let $B_i$ for $i = 0, \ldots, n$ be a collection of conjugacy class representatives of all finite subgroups in $Q$.  Let $\{B_i^j\}_j$ be a collection of conjugacy class representatives of finite subgroups in $G$ which project onto $B_i$.  Since $F$ is finite $\pi^{-1}(B_i)$ is finite and there are only finitely many $j$ for each $i$, we claim that these $B_i^j$ are conjugacy class representatives for all finite subgroups in $G$.

Let $K$ be some finite subgroup of $G$, we need to check it is conjugate to some $B_i^j$.  $A= \pi(K)$ is conjugate to $B_i$, let $q \in Q$ be such that $q^{-1}Aq = B_i$ and let $g \in G$ be such that $\pi(g) = q$. 

$\pi(g^{-1} K g) = q^{-1} A q = B_i$ so $g^{-1}Kg$ is conjugate to some $B_i^j$ and hence $K$ is conjugate to some $B_i^j$.  Since we have already observed that for each $i = 0,\ldots, n$ the set $\{B_i^j\}_j$ is finite, $G$ has finitely many conjugacy classes of finite subgroups.
\end{proof}

\begin{Lemma}\label{lemma:pi G to Q with finite kernel then NGK is finite index in etc}
If $K$ is a finite subgroup of $G$ then $N_GK$ is finite index in $N_G (\pi^{-1}\circ\pi(K))$.
\end{Lemma}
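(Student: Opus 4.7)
Write $L = \pi^{-1}(\pi(K))$ for brevity. The plan is first to show that $L$ is itself a \emph{finite} subgroup of $G$, then to exhibit $N_G K$ as the stabiliser of $K$ under a conjugation action of $N_G L$ on a finite set, and finally to apply orbit--stabiliser.

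Step one is immediate: since $K$ is finite, $\pi(K)$ is finite, and since the kernel $F$ is finite by hypothesis, $|L| = |F|\cdot|\pi(K)| < \infty$. In particular $L$ has only finitely many subgroups.

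Step two is to verify the inclusion $N_G K \subseteq N_G L$. Given $g \in N_G K$, I would compute
\[
\pi(gLg^{-1}) \;=\; \pi(g)\,\pi(L)\,\pi(g)^{-1} \;=\; \pi(g)\,\pi(K)\,\pi(g)^{-1} \;=\; \pi(gKg^{-1}) \;=\; \pi(K),
\]
so $gLg^{-1} \subseteq \pi^{-1}(\pi(K)) = L$; since $gLg^{-1}$ and $L$ are finite of the same order, they coincide. Thus $g \in N_G L$.

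For step three, let $N_G L$ act by conjugation on the finite set $\mathcal{S}$ of subgroups of $L$. The stabiliser of $K \in \mathcal{S}$ under this action is exactly $\{g \in N_G L : gKg^{-1} = K\} = N_G K \cap N_G L = N_G K$, the last equality by step two. By orbit--stabiliser, $[N_G L : N_G K]$ equals the size of the $N_G L$-orbit of $K$ in $\mathcal{S}$, which is at most $|\mathcal{S}| < \infty$. This gives the required finite index. The argument is entirely elementary and I do not foresee any obstacle; the only mild subtlety is the verification $N_G K \subseteq N_G L$, which uses that $L$ is defined as a preimage under $\pi$ together with the finiteness of $L$ to upgrade the inclusion $gLg^{-1} \subseteq L$ to an equality.
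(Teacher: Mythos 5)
Your proof is correct and follows essentially the same strategy as the paper: show $N_G K \le N_G L$ where $L = \pi^{-1}(\pi(K))$, then realise $N_G K$ as the stabiliser of $K$ under a conjugation action of $N_G L$ on a finite set and invoke orbit--stabiliser. The only cosmetic difference is that you take the finite set to be the set of subgroups of $L$, while the paper phrases it via the (equally finite) coset space $\pi^{-1}(\pi(K))/K$; either choice yields the required finite bound on the index.
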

\begin{proof}
 $N_GK$ is a subgroup of $N_G (\pi^{-1}\circ\pi(K))$ since if $g^{-1}Kg = K$ then 
\[\left(\pi^{-1} \circ \pi(g)\right) \left(\pi^{-1} \circ \pi(K)\right) \left(\pi^{-1} \circ \pi(g)\right)^{-1} = \pi^{-1} \circ \pi(K)\]
But $g \in \pi^{-1} \circ \pi(g)$ so $g\left( \pi^{-1} \circ \pi(K) \right) g^{-1} = \pi^{-1} \circ \pi(K)$.

$N_G K$ is the stabiliser of the conjugation action of $G$ on $G/K$ so by the above can be described as the stabiliser of the action of $N_G \left(\pi^{-1}\circ\pi(K)\right)$ on $ G/K$ by conjugation.  But $N_G \left(\pi^{-1}\circ\pi(K)\right)$ maps $K$ inside $\pi^{-1}\circ\pi(K)$ so $N_G K$ is the stabiliser of $N_G \left(\pi^{-1}\circ\pi(K)\right)$ on  $\pi^{-1}\circ\pi(K)/K$.  

$K$ is finite, so $\pi(K)$ is finite and since the kernel of $\pi$ is finite, $\pi^{-1}\circ\pi(K)$ is finite.  Hence the stabiliser must be a finite index subgroup of $N_G\left(\pi^{-1}\circ\pi(K)\right)$.  
\end{proof}

\begin{Lemma}\label{lemma:pi G to Q and L subgroup of Q - structure of preimage of NQL}
 If $L$ is a subgroup of $Q$ then $N_G\pi^{-1}(L) = \pi^{-1} N_Q L$.
\end{Lemma}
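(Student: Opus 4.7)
The statement is a set equality, so the natural plan is to prove the two inclusions separately by direct chasing of elements through $\pi$. The key structural fact I would use is that since $\pi$ is surjective, for any subset $S \subseteq Q$ we have $\pi(\pi^{-1}(S)) = S$; this is what lets information pass back and forth across the extension.

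For the inclusion $\pi^{-1}N_Q L \subseteq N_G \pi^{-1}(L)$, I would take $g \in G$ with $\pi(g) \in N_Q L$ and show that conjugation by $g$ preserves $\pi^{-1}(L)$. If $h \in \pi^{-1}(L)$, then $\pi(ghg^{-1}) = \pi(g)\pi(h)\pi(g)^{-1} \in \pi(g) L \pi(g)^{-1} = L$, so $ghg^{-1} \in \pi^{-1}(L)$. The same argument with $g^{-1}$ in place of $g$ (noting $\pi(g)^{-1} \in N_Q L$ as well, since $N_Q L$ is a subgroup) gives the reverse containment, hence $g \pi^{-1}(L) g^{-1} = \pi^{-1}(L)$.

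For the reverse inclusion $N_G \pi^{-1}(L) \subseteq \pi^{-1}N_Q L$, I would take $g \in G$ with $g\pi^{-1}(L)g^{-1} = \pi^{-1}(L)$ and apply $\pi$ to both sides. This yields $\pi(g)\,\pi(\pi^{-1}(L))\,\pi(g)^{-1} = \pi(\pi^{-1}(L))$, and by surjectivity of $\pi$ the inner term simplifies to $L$, giving $\pi(g) L \pi(g)^{-1} = L$, i.e.\ $\pi(g) \in N_Q L$ and therefore $g \in \pi^{-1}N_Q L$.

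There is no real obstacle here; the lemma is essentially a formal compatibility between preimages and normalisers, and the only point worth being careful about is invoking surjectivity of $\pi$ to collapse $\pi(\pi^{-1}(L))$ to $L$ in the second direction. Nothing about $F$ being finite is used, which is consistent with the fact that this lemma, unlike Lemma \ref{lemma:pi G to Q with finite kernel then NGK is finite index in etc}, is a purely set-theoretic statement about quotient maps of groups.
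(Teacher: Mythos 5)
Your proof is correct and takes essentially the same route as the paper: both directions are handled by elementary element-chasing through $\pi$, using the surjectivity of $\pi$ to pass between $L$ and $\pi^{-1}(L)$, and the only cosmetic difference is that in the inclusion $\pi^{-1}N_QL \subseteq N_G\pi^{-1}(L)$ the paper takes preimages of the identity $\pi(g)^{-1}L\pi(g)=L$ under $\pi$, whereas you verify directly that conjugation by $g$ maps $\pi^{-1}(L)$ into itself.
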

\begin{proof}
If $g \in N_G\pi^{-1}(L)$ then $g^{-1}\pi^{-1}(L)g = \pi^{-1}(L)$ so applying $\pi$ gives that $\pi(g)^{-1}L\pi(g) = L$ and thus $g \in \pi^{-1}N_QL$.  

Conversely if $g \in \pi^{-1} (N_Q L)$ then $\pi(g)^{-1}L\pi(g) = L$ so 
\[\left( \pi^{-1}\circ\pi(g) \right)^{-1} \pi^{-1}(L) \left( \pi^{-1}\circ\pi(g) \right) = \pi^{-1}(L)\]
Since $g \in \pi^{-1}\circ\pi(g)$, $g^{-1}\pi^{-1}(L)g = \pi^{-1}(L)$.
\end{proof}

\begin{Prop}\label{prop:finite-by-uDn is uDn}
If $Q$ is Bredon duality of dimension $n$ over $R$ then $G$ is Bredon duality of dimension $n$ over $R$.
\end{Prop}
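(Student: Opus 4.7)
The plan is to verify the three defining conditions for $G$ to be Bredon duality of dimension $n$ over $R$: that $G$ is of type $\uFP$, that $H^*(W_G K, R[W_G K])$ is concentrated in one degree with $R$-flat coefficients for every finite subgroup $K \le G$, and that $\ucd_R G = n$. Throughout, we exploit that $\pi \colon G \to Q$ has finite kernel by repeatedly chaining Lemmas \ref{lemma:Finite by Q implies HG is HQ}, \ref{lemma:Finite-by-uFP0 is uFP0}, \ref{lemma:pi G to Q with finite kernel then NGK is finite index in etc} and \ref{lemma:pi G to Q and L subgroup of Q - structure of preimage of NQL}.

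First I would establish $G \in \uFP$ and the Weyl group condition simultaneously. Given a finite subgroup $K \le G$, set $L := \pi(K)$, a finite subgroup of $Q$. By Lemmas \ref{lemma:pi G to Q with finite kernel then NGK is finite index in etc} and \ref{lemma:pi G to Q and L subgroup of Q - structure of preimage of NQL}, $N_G K$ is a finite index subgroup of $\pi^{-1}(N_Q L)$, which is a finite extension of $N_Q L$. Since $Q$ is $\uFP$, $N_Q L$ is $\FP_\infty$ over $R$ by Lemma \ref{lemma:OFFPn characterisation}, and because $\FP_\infty$ is preserved by finite extensions and by passage to finite index subgroups, $N_G K$ and hence $W_G K$ are $\FP_\infty$; combined with Lemmas \ref{lemma:Finite-by-uFP0 is uFP0} and \ref{lemma:OFFPn characterisation} this shows $G \in \uFP$. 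Now applying Lemma \ref{lemma:Finite by Q implies HG is HQ} to each of the three extensions $K \to N_G K \to W_G K$, $F \to \pi^{-1}(N_Q L) \to N_Q L$, $L \to N_Q L \to W_Q L$, together with Shapiro's lemma for the finite index inclusion $N_G K \le \pi^{-1}(N_Q L)$, assembles into a chain of isomorphisms
\[H^i(W_G K, R[W_G K]) \cong H^i(W_Q L, R[W_Q L]).\]
Bredon duality of $Q$ makes the right hand side $R$-flat for $i = n_L$ and zero otherwise, so setting $n_K := n_L$ gives the required cohomological behaviour.

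The main obstacle is showing $\ucd_R G = n$. The plan is to introduce the functor $\pi_* \colon \OFG \to \mathcal{O}_{\mathfrak{F}} Q$ defined by $G/K \mapsto Q/\pi(K)$, which is well-defined on finite subgroups because $\pi$ has finite kernel. Precomposition with $\pi_*$ gives an exact pullback functor $\pi^*$ from Bredon $Q$-modules to Bredon $G$-modules, and a short check (using that $\pi$ is surjective and $\pi^{-1}(L)$ is finite whenever $L$ is) produces a natural isomorphism $\pi^* R[-, Q/L] \cong R[-, G/\pi^{-1}(L)]$ of Bredon $G$-modules; hence $\pi^*$ preserves finitely generated projective modules. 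Pulling back a length-$n$ finitely generated projective resolution of $\uR$ for $Q$ (which exists because $Q$ is $\uFP$ with $\ucd_R Q = n$) therefore yields one for $G$, establishing $\ucd_R G \le n$. For the matching lower bound, the Yoneda-type Lemma \ref{lemma:yoneda-type} identifies
\[\Mor_{\mathfrak{F}}(\pi^* R[-, Q/L], \pi^* N) \cong N(Q/L) \cong \Mor_{\mathfrak{F}}(R[-, Q/L], N),\]
so the cochain complexes computing $H^*_{\mathfrak{F}}(G, \pi^* N)$ and $H^*_{\mathfrak{F}}(Q, N)$ coincide, giving a natural isomorphism $H^*_{\mathfrak{F}}(G, \pi^* N) \cong H^*_{\mathfrak{F}}(Q, N)$. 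Applying this to any Bredon $Q$-module $N$ witnessing $H^n_{\mathfrak{F}}(Q, N) \ne 0$ forces $\ucd_R G \ge n$, completing the argument.
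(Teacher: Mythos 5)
Your proof is correct. For showing $G$ is $\uFP$ and for the Weyl group cohomology isomorphism $H^i(W_G K, R[W_G K]) \cong H^i(W_Q\pi(K), R[W_Q\pi(K)])$, you take essentially the same route as the paper: combine Lemmas \ref{lemma:pi G to Q with finite kernel then NGK is finite index in etc} and \ref{lemma:pi G to Q and L subgroup of Q - structure of preimage of NQL} to see $N_G K$ is finite index in $\pi^{-1}(N_Q\pi(K))$, then chain Lemma \ref{lemma:Finite by Q implies HG is HQ} and Shapiro's lemma across the three extensions you list.

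The genuine difference is in establishing $\ucd_R G = n$. The paper dispatches this by citing a known result of Nucinkis (\cite[5.5]{Nucinkis-OnDimensionsInBredonCohomology}, which asserts precisely that a quotient with finite kernel preserves Bredon cohomological dimension), whereas you give a self-contained reproof via the induced functor $\pi_* \colon \OFG \to \mathcal{O}_{\mathfrak{F}}Q$, $G/K \mapsto Q/\pi(K)$, and its exact restriction functor $\pi^*$. Your key observation that $\pi^* R[-, Q/L] \cong R[-, G/\pi^{-1}(L)]$ is correct (the assignment $g\pi^{-1}(L) \mapsto \pi(g)L$ is a natural bijection since $\pi$ is surjective with kernel contained in $\pi^{-1}(L)$), and it cleanly gives both $\ucd_R G \le n$ by pulling back a length-$n$ finite type projective resolution, and $\ucd_R G \ge n$ by the Yoneda computation $\Mor_{\mathfrak{F}}(\pi^* P, \pi^* N) \cong \Mor_{\mathfrak{F}}(P, N)$. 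Your approach is longer but avoids an external citation, and the auxiliary functor $\pi^*$ makes the mechanism (finite kernel inducing an equivalence-on-cohomology) transparent; the paper's citation is shorter but hides exactly this mechanism inside Nucinkis's result. Both are valid; yours has the pedagogical advantage of being closed under the paper's own toolkit.
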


\begin{proof}
Let $K$ be a finite subgroup of $G$. We combine Lemma \ref{lemma:pi G to Q with finite kernel then NGK is finite index in etc} and Lemma \ref{lemma:pi G to Q and L 
subgroup of Q - structure of preimage of NQL} to see that $N_GK$ is finite index in $N_G (\pi^{-1} \circ \pi(K)) = \pi^{-1}\left( N_Q \pi(K) \right)$.  Hence 
\begin{align*}
H^i\left(W_GK , R[W_GK]\right) 
& \cong H^i\left(N_GK , R[N_GK]\right) \\
&\cong H^i\left(\pi^{-1} \left( N_Q \pi(K) \right), R\left[\pi^{-1}\left( N_Q\pi(K)\right) \right]\right) \\
&\cong H^i\left( N_Q \pi(K) , R\left[ N_Q\pi(K) \right]\right) \\
&\cong H^i\left( W_Q \pi(K) , R\left[ W_Q\pi(K) \right]\right)
\end{align*}
Where the third isomorphism follows from Lemma \ref{lemma:Finite by Q implies HG is HQ} and the short exact sequence
\[1 \longrightarrow F \longrightarrow \pi^{-1}\left( N_Q\pi(K)\right) \longrightarrow N_Q\pi(K) \longrightarrow 1 \]

Since $Q$ is Bredon duality of dimension $n$ this gives the condition on the cohomology of the Weyl groups.

$G$ is $\uFP_0$ by \ref{lemma:Finite-by-uFP0 is uFP0}, and $\ucd G = \ucd Q = n$ by \cite[5.5]{Nucinkis-OnDimensionsInBredonCohomology}.  So by Lemma \ref{lemma:OFFPn characterisation}, it remains to show that the Weyl groups of the finite subgroups are $\FP_\infty$.  For any finite subgroup $K$ of $G$, the short exact sequence above and \cite[Proposition 1.4]{Bieri-HomDimOfDiscreteGroups} gives that $\pi^{-1}\left( N_Q\pi(K)\right)$ is $\FP_\infty$.  But, as discussed at the beginning of the proof, $N_GK$ is finite index in $N_G (\pi^{-1} \circ \pi(K)) = \pi^{-1}\left( N_Q \pi(K) \right)$, so $N_G K$ is $\FP_\infty$ also. 
\end{proof}

Examining the proof above it's clear that $\mathcal{V}(G) = \mathcal{V}(Q)$.

\section{Graphs of Groups}
An amalgamated free product of two duality groups of dimension $n$ over a duality group of dimension $n-1$ is duality of dimension $n$, similarly an HNN extension of a duality group of dimension $n$ relative to a duality groups of dimension $n-1$ group is duality of dimension $n$ \cite[9.15]{Bieri-HomDimOfDiscreteGroups}.  We cannot hope for such a nice result as the normalisers of finite subgroups may be badly behaved, however there are some more restrictive cases where we can get results.  For instance using graphs of groups of Bredon duality groups we will be able to build Bredon duality groups $G$ with arbitrary $\mathcal{V}(G)$.  

We need some preliminary results, showing that a graph of groups is $\uFP$ if all groups involved are $\uFP$.  The following Proposition is well known over $\ZZ$, see for example \cite[Lemma 3.2]{Gandini-SomeH1FGroupsAndAConjectureOfKrophollerAndMislin}, and the proof extends with no alterations to arbitrary rings $R$.  See \cite{Serre} for the necessary background on Bass-Serre trees and graphs of groups.

\begin{Lemma}\label{lemma: duality mayervietoris of graph of groups}
There is an exact sequence, arising from the Bass-Serre tree.
 \[ \cdots \longrightarrow H^i_{\mf{F}} \left(G, -\right) \longrightarrow \bigoplus_{v \in V} H^i_{\mf{F}} \left( G_v, \Res^{G}_{G_v} - \right) \]
\[ \longrightarrow \bigoplus_{e \in E}H^i_{\mf{F}} \left(G_e, \Res^{G}_{G_e} - \right) \longrightarrow \cdots  \]
\end{Lemma}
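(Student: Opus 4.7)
The plan is to realize the sequence as the long exact $\Ext$-sequence arising from a two-term resolution of $\uR$ built from the Bass--Serre tree $T$ of the graph of groups decomposition of $G$.

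First, I would observe that for every finite subgroup $H \le G$ the fixed-point set $T^H$ is a non-empty subtree, hence contractible. Non-emptiness follows from the standard Bass--Serre fact that every finite subgroup of the fundamental group of a graph of groups is subconjugate to some vertex group and so fixes a vertex of $T$; being a subtree follows from the elementary observation that the fixed set of any group of tree automorphisms acting without inversion is a subtree.

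Second, the vertex set of $T$ is $\bigsqcup_{v \in V} G/G_v$ and the edge set is $\bigsqcup_{e \in E} G/G_e$, so the augmented cellular chain complex of $T$ gives a sequence of Bredon modules
\[
0 \longrightarrow \bigoplus_{e \in E} R[-,G/G_e] \longrightarrow \bigoplus_{v \in V} R[-,G/G_v] \longrightarrow \uR \longrightarrow 0.
\]
Evaluated at $G/H$ for any $H \in \mathfrak{F}$, this becomes the augmented cellular chain complex of $T^H$ with $R$-coefficients, which is exact by the first step; so the above is a short exact sequence of Bredon modules.

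Third, I would apply $\Ext^*_{\mathfrak{F}}(-, M)$. By definition $\Ext^*_{\mathfrak{F}}(\uR, M) = H^*_{\mathfrak{F}}(G, M)$, and the Bredon form of Shapiro's lemma provides a natural isomorphism
\[
\Ext^*_{\mathfrak{F}}(R[-, G/K], M) \cong H^*_{\mathfrak{F}}(K, \Res^G_K M)
\]
for every subgroup $K \le G$, obtained from the restriction--induction adjunction applied to a projective resolution of $\uR_K$ and Lemma \ref{lemma:yoneda-type} in degree zero. Applied termwise, and using that $\Ext$ in its contravariant slot turns direct sums into products (reducing to a direct sum when the graph of groups is finite), the long $\Ext$-sequence rewrites precisely as the claimed Mayer--Vietoris sequence.

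The principal obstacle is the Shapiro-type identification when the vertex (or edge) group is infinite: in that case $R[-, G/G_v]$ is not a representable functor on $\OFG$, and one has to verify that induction from Bredon modules over $\mathcal{O}_{\mathfrak{F}}G_v$ to Bredon modules over $\OFG$ is exact and sends projectives to projectives, so that it carries a projective resolution of $\uR_{G_v}$ to one of $R[-, G/G_v]$. This is precisely the work done in the $R = \mathbb{Z}$ case in \cite[Lemma 3.2]{Gandini-SomeH1FGroupsAndAConjectureOfKrophollerAndMislin}, and nothing in that argument depends on the ring.
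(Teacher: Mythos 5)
Your proof is correct and is essentially the argument the paper delegates to its citation: the paper itself gives no proof of this lemma beyond pointing to \cite[Lemma 3.2]{Gandini-SomeH1FGroupsAndAConjectureOfKrophollerAndMislin} and observing that the $\ZZ$-argument carries over verbatim to arbitrary $R$. You have correctly identified the two load-bearing facts --- that $T^H$ is a non-empty subtree for every finite $H\le G$ (making the Bredon-module version of the augmented cellular chain complex of $T$ short exact), and the induction/Shapiro identification $\Ext^*_{\mathfrak F}\bigl(R[-,G/K],\,-\bigr)\cong H^*_{\mathfrak F}\bigl(K,\Res^G_K -\bigr)$ for arbitrary $K\le G$ --- and your deferral of the latter to the Gandini--Nucinkis argument is exactly what the paper does.
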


\begin{Lemma}\label{lemma:uFPn for graph of groups}
 If all vertex groups $G_v$ are of type $\uFP_n$ and all edge groups $G_e$ are of type $\uFP_{n-1}$ over $R$ then $G$ is of type $\uFP_n$ over $R$.
\end{Lemma}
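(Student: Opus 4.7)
The plan is to verify $G$ is $\uFP_n$ directly via the Bredon analogue of the Bieri--Eckmann criterion from \cite[\S 5]{MartinezNucinkis-GeneralizedThompsonGroups}: $G$ is $\uFP_n$ over $R$ if and only if $\varinjlim H^i_{\mathfrak{F}}(G, M_\lambda) = 0$ for every $i \le n$ and every directed system $\{M_\lambda\}$ of Bredon modules with $\varinjlim M_\lambda = 0$. Given such a system, the restriction functors $\Res^G_{G_v}$ and $\Res^G_{G_e}$ are exact and commute with filtered colimits (both operations being computed pointwise in the Bredon module categories), so each restricted system also has zero colimit.

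Applying the forward direction of the criterion to the vertex groups (type $\uFP_n$) then yields $\varinjlim H^i_{\mathfrak{F}}(G_v, \Res^G_{G_v} M_\lambda) = 0$ for all $v \in V$ and all $i \le n$, and applied to the edge groups (type $\uFP_{n-1}$) it yields $\varinjlim H^j_{\mathfrak{F}}(G_e, \Res^G_{G_e} M_\lambda) = 0$ for all $e \in E$ and all $j \le n-1$. Feeding $\{M_\lambda\}$ into the long exact sequence of Lemma \ref{lemma: duality mayervietoris of graph of groups} and taking the filtered colimit (exact, and commuting with the direct sums) then gives, for each $i \le n$, an exact sequence in which $\varinjlim H^i_{\mathfrak{F}}(G, M_\lambda)$ is flanked by $\varinjlim \bigoplus_e H^{i-1}_{\mathfrak{F}}(G_e, \Res^G_{G_e} M_\lambda) = 0$ (using $i-1 \le n-1$) on the left and $\varinjlim \bigoplus_v H^i_{\mathfrak{F}}(G_v, \Res^G_{G_v} M_\lambda) = 0$ on the right. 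Hence $\varinjlim H^i_{\mathfrak{F}}(G, M_\lambda) = 0$ for every $i \le n$, and the reverse direction of the criterion gives $\uFP_n$ for $G$.

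No step is a genuine obstacle: the Bieri--Eckmann criterion is already available in Bredon cohomology, restriction of Bredon modules behaves transparently with respect to exactness and colimits, and the required long exact sequence is supplied by Lemma \ref{lemma: duality mayervietoris of graph of groups}. It is worth noting that the $i = 0$ case of the iteration (with $H^{-1}$ interpreted as zero) already subsumes the $\uFP_0$ condition for $G$, so no separate count of conjugacy classes of finite subgroups of $G$ needs to be made.
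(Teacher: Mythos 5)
Your argument is exactly the paper's own: feed the directed system $\{M_\lambda\}$ into the Mayer--Vietoris sequence of Lemma \ref{lemma: duality mayervietoris of graph of groups}, pass to the (exact, direct-sum-commuting) filtered colimit, and apply the Bredon Bieri--Eckmann criterion twice to sandwich each $\varinjlim H^i_{\mathfrak{F}}(G, M_\lambda)$ with $i \le n$ between a vanishing $\varinjlim\bigoplus_e H^{i-1}_{\mathfrak{F}}(G_e,-)$ term and a vanishing $\varinjlim\bigoplus_v H^i_{\mathfrak{F}}(G_v,-)$ term. Your observation that the $i=0$ instance already forces $\uFP_0$ for $G$ (so no separate finiteness check on conjugacy classes is needed) is a correct point the paper leaves implicit.
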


\begin{proof}
 Let $M_\lambda$, for $\lambda \in \Lambda$, be a directed system of Bredon-modules with colimit zero, for any subgroup $H$ of $G$ the directed system $\Res^{G}_{H} M_\lambda$ also has colimit zero.  The long exact sequence of Lemma \ref{lemma: duality mayervietoris of graph of groups}, and the exactness of colimits gives that for all $i$, there is an exact sequence 
\[ \cdots  \longrightarrow \varinjlim_{\lambda \in \Lambda} H^{i-1}_{\mf{F}} \left(G, M_\lambda \right) \longrightarrow \bigoplus_{v \in V} \varinjlim_{\lambda \in \Lambda} H^i_{\mf{F}} \left( G_v, \Res^{G}_{G_v} M_\lambda \right) \]
\[ \longrightarrow  \bigoplus_{e \in E} \varinjlim_{\lambda \in \Lambda} H^i_{\mf{F}} \left(G_e, \Res^{G}_{G_e} M_\lambda \right) \longrightarrow \cdots  \]
If $i \le n$ then by the Bieri-Eckmann criterion \cite[\S 5]{MartinezNucinkis-GeneralizedThompsonGroups}, the left and right hand terms vanish, thus the central term vanishes.  Another application of the Bieri-Eckmann criterion gives that $G$ is $\uFP_n$.
\end{proof}

\begin{Lemma}\label{lemma:ucd for graph of groups}
 If $\ucd_R G_v \le n$ for all vertex groups $G_v$ and $ \ucd_R G_e \le n-1$ for all edge groups $G_e$ then $\ucd_R G \le n$.
\end{Lemma}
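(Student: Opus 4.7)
The plan is to invoke the long exact sequence of Lemma \ref{lemma: duality mayervietoris of graph of groups} and show vanishing of cohomology in degrees above $n$. Recall that $\ucd_R G \le n$ is equivalent to $H^i_{\mf{F}}(G, M) = 0$ for all Bredon modules $M$ and all $i > n$, so it suffices to verify this vanishing.

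Fix an arbitrary Bredon module $M$ and an integer $i \ge n+1$. The relevant three-term fragment of the Mayer--Vietoris long exact sequence is
\[ \bigoplus_{e \in E} H^{i-1}_{\mf{F}}\bigl(G_e, \Res^{G}_{G_e} M \bigr) \longrightarrow H^i_{\mf{F}}(G, M) \longrightarrow \bigoplus_{v \in V} H^i_{\mf{F}}\bigl(G_v, \Res^{G}_{G_v} M\bigr). \]
Since $i \ge n+1 > n \ge \ucd_R G_v$ for every vertex group, the right-hand term vanishes. Since $i - 1 \ge n > n - 1 \ge \ucd_R G_e$ for every edge group, the left-hand term vanishes as well. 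Exactness forces $H^i_{\mf{F}}(G, M) = 0$, and as $M$ and $i$ were arbitrary this yields $\ucd_R G \le n$.

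The only real subtlety is a bookkeeping one: the edge groups contribute in the long exact sequence one degree below the vertex groups, so the hypothesis $\ucd_R G_e \le n-1$ is exactly what is needed to kill the edge term at degree $i - 1 \ge n$. No further argument is required; the result is immediate from Lemma \ref{lemma: duality mayervietoris of graph of groups} together with the definition of $\ucd_R$.
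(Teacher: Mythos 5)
Your proof is correct and follows exactly the route the paper intends: the paper's proof is the single line ``Use the long exact sequence of Lemma \ref{lemma: duality mayervietoris of graph of groups},'' and you have simply spelled out the routine vanishing argument in the relevant three-term fragment.
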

\begin{proof}
Use the long exact sequence of Lemma \ref{lemma: duality mayervietoris of graph of groups}.
\end{proof}

\begin{Lemma}\label{lemma:cohomology for graph of groups}
 If there is some integer $n$ such that for all vertex groups $G_v$ and all edge groups $G_e$, $H^i(G_v, RG_v)$ is $R$-flat if $i = n$ and $0$ otherwise and $H^i(G_e, RG_e)$ is $R$-flat if $i = n-1$ and $0$ else, then $H^i(G, RG)$ is $R$-flat if $i = n$ and $0$ else.
\end{Lemma}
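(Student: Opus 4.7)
The plan is to apply the Mayer--Vietoris long exact sequence arising from the $G$-action on the Bass--Serre tree of the graph of groups, specialised at $M = RG$, and to read the conclusion off from the hypotheses. For a finite graph of groups, the ordinary-cohomology analogue of Lemma \ref{lemma: duality mayervietoris of graph of groups} (see \cite[VII.9]{Brown}) gives
\[ \cdots \to H^i(G, RG) \to \bigoplus_{v \in V} H^i(G_v, RG) \to \bigoplus_{e \in E} H^i(G_e, RG) \to H^{i+1}(G, RG) \to \cdots \]

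First I would identify the contribution from each subgroup. Since $RG$ is free as a left $RG_v$-module, $RG \cong \bigoplus_{G_v \backslash G} RG_v$, and granted the (implicit) $\FP_\infty$-hypothesis on $G_v$ inherited from Lemma \ref{lemma:uFPn for graph of groups}, cohomology of $G_v$ commutes with arbitrary direct sums of coefficients by the Bieri--Eckmann criterion. Hence
\[ H^i(G_v, RG) \;\cong\; \bigoplus_{G_v \backslash G} H^i(G_v, RG_v), \]
which by hypothesis vanishes for $i \ne n$ and is $R$-flat for $i = n$ (a direct sum of flat $R$-modules being flat). The same argument applied to the edge groups gives $H^i(G_e, RG) = 0$ for $i \ne n-1$ and $R$-flat for $i = n-1$.

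Finally I would chase the long exact sequence. For every $i \ne n$, both $\bigoplus_v H^i(G_v, RG)$ and $\bigoplus_e H^{i-1}(G_e, RG)$ vanish, so exactness forces $H^i(G, RG) = 0$. For $i = n$ the vanishing of $H^{n-1}(G_v, RG)$ and $H^n(G_e, RG)$ collapses the sequence to a short exact sequence
\[ 0 \to \bigoplus_{e \in E} H^{n-1}(G_e, RG) \to H^n(G, RG) \to \bigoplus_{v \in V} H^n(G_v, RG) \to 0, \]
whose outer terms are $R$-flat; since flatness is closed under extensions via the $\Tor_1$-sequence, $H^n(G, RG)$ is $R$-flat. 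The main technical obstacle is the swap $H^i(G_v, RG) \cong \bigoplus H^i(G_v, RG_v)$, which requires an $\FP_\infty$-type finiteness on $G_v$ (since $G_v$ typically has infinite index in $G$, the indexing set $G_v\backslash G$ is genuinely infinite); I expect this to be an implicit running hypothesis of the section, carried through from Lemma \ref{lemma:uFPn for graph of groups}.
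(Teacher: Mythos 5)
Your proof is correct and takes essentially the same route as the paper: apply the Mayer--Vietoris sequence at $RG$, rewrite $H^q(G_v, RG) \cong H^q(G_v, RG_v) \otimes_{RG_v} RG$ (equivalently the direct-sum form you use), and close by noting that flatness passes to extensions. The paper cites Bieri's Proposition~5.4 for the coefficient swap rather than invoking the Bieri--Eckmann criterion directly, but these hinge on the same implicit $\FP$-hypothesis on the vertex and edge groups that you correctly flag.
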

\begin{proof}
The Mayer-Vietoris sequence associated to the graph of groups is
\[ \cdots \longrightarrow H^q(G, RG) \longrightarrow \bigoplus_{v \in V} H^q\left(G_v, RG\right) \longrightarrow \bigoplus_{e \in E} H^q\left(G_e, RG \right) \longrightarrow \cdots \]
$H^q\left(G_v, RG\right) = H^q(G_v, RG_v) \otimes_{RG_v} RG$ by \cite[Proposition 5.4]{Bieri-HomDimOfDiscreteGroups} so we have 
\[ H^q(G, RG) = 0 \text{ for } q \neq n \]
and a short exact sequence
\begin{equation*}0 \longrightarrow \bigoplus_{e \in E} H^{n-1}(G_e, RG_e) \otimes_{RG_e} RG \longrightarrow H^n(G, RG) \tag{$\star$}\end{equation*}
\[ \longrightarrow \bigoplus_{v \in V} H^n(G_v, RG_v)\otimes_{RG_v} RG \longrightarrow 0\]
Finally, extensions of flat modules by flat modules are flat (use, for example, the long exact sequence associated to $\Tor^{RG}_*$).
\end{proof}

\begin{Remark}\label{remark:duality lemma:cohomology for graph of groups}
 In the lemma above, if $H^n(G, RG_v) \cong R$ and $H^{n-1}(G_e, RG_e) \cong R$ for all vertex and edge groups then $H^n(G, RG) $ will not be isomorphic to $R$.
\end{Remark}

\begin{Lemma}\label{lemma:duality normaliser in tree of groups}
Let $G$ be the fundamental group of a graph of groups $Y$.  If $K$ is a subgroup of the vertex group $G_v$ and $K$ is not subconjugate to any edge group then $N_G K = N_{G_v} K$. 
\end{Lemma}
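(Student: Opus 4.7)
The plan is to use the Bass--Serre tree $T$ associated to the graph of groups decomposition of $G$, and argue that $K$ has a unique fixed vertex, which forces its normaliser to be contained in the stabiliser of that vertex.

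First I recall from Bass--Serre theory that $G$ acts simplicially on the tree $T$ with vertex stabilisers conjugate to the vertex groups $G_w$ and edge stabilisers conjugate to the edge groups $G_e$, and that there is a distinguished vertex $\tilde{v} \in T$ whose stabiliser is exactly $G_v$. Since $K \le G_v$, the subgroup $K$ fixes $\tilde{v}$, so the fixed point set $T^K$ is non-empty.

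Next I show that $T^K = \{\tilde{v}\}$. The set $T^K$ is a subtree of $T$: if $K$ fixes two distinct vertices, it fixes every vertex and edge on the geodesic between them, since the action is simplicial on a tree. In particular, if $T^K$ strictly contained $\tilde v$, then $K$ would fix some edge $\tilde e$, and so $K$ would be contained in the stabiliser of $\tilde e$, which is a conjugate of an edge group $G_e$. This contradicts the hypothesis that $K$ is not subconjugate to any edge group. Hence $T^K = \{\tilde v\}$.

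Now take any $g \in N_G K$. Then $g K g^{-1} = K$, so $g$ permutes the fixed point set of $K$: indeed $g \cdot T^K = T^{gKg^{-1}} = T^K$. Since $T^K = \{\tilde v\}$, this forces $g \cdot \tilde v = \tilde v$, so $g$ lies in the stabiliser of $\tilde v$, which is $G_v$. Therefore $g \in G_v \cap N_G K = N_{G_v} K$, giving $N_G K \subseteq N_{G_v} K$. The reverse inclusion is trivial, and the proof is complete. The only step requiring any care is the verification that $T^K$ is a single vertex, which is where the ``not subconjugate to any edge group'' hypothesis is essential.
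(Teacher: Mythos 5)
Your proof is correct and takes essentially the same approach as the paper: both argue that $N_GK$ preserves the $K$-fixed subtree of the Bass--Serre tree, which by the non-subconjugacy hypothesis is a single vertex whose stabiliser is $G_v$. Your write-up simply supplies the details (fixed sets are subtrees, a larger subtree would force $K$ to fix an edge) that the paper's one-line proof leaves implicit.
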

\begin{proof}
The normaliser $N_G K$ acts on the $K$-fixed points of the Bass-Serre tree of $(G,Y)$, but only a single vertex is fixed by $K$, so necessarily $N_G K \le G_v$.
\end{proof}

\begin{Example}
Let $S_n$ denote the star graph of $n+1$ vertices---a single central vertex $v_0$, and a single edge connecting every other vertex $v_i$ to the central vertex.  Let $G$ be the fundamental group of the graph of groups on $S_n$, where the central vertex group $G_0$ is torsion-free duality of dimension $n$, the edge groups are torsion-free duality of dimension $n-1$ and the remaining vertex groups $G_i$ are Bredon duality of dimension $n$ with $H^n(G, RG) \neq 0$.

By Lemmas \ref{lemma:uFPn for graph of groups} and \ref{lemma:ucd for graph of groups}, $G$ is $\uFP$ of dimension $n$, so to prove it is Bredon duality it suffices to check the cohomology of the Weyl groups of the finite subgroups.  Any non-trivial finite subgroup is subconjugate to a unique vertex group $G_i$, and cannot be subconjugate to an edge group since they are assumed torsion-free.  If $K$ is a subgroup of $G_i$ then by Lemma \ref{lemma:duality normaliser in tree of groups}, $H^i(N_G K, R[N_GK]) \cong H^i(N_{G_i} K, R[N_{G_i}K])$ and the condition follows as $G_i$ was assumed to be Bredon duality.  Finally, for the trivial subgroup we must calculate $H^i(G, RG)$, which is Lemma \ref{lemma:cohomology for graph of groups}.

$\mathcal{V}(G)$ is easily calculable too, 
\[  \mathcal{V}(G) = \mathcal{V}(G_1) \vee \cdots \vee \mathcal{V}(G_n) \]
Where $\vee$ denotes the binary ``or'' operation.
\end{Example}

Specialising the above example:

\begin{Example}[A Bredon duality group with prescribed $\mathcal{V}(G)$]\label{example:duality arbitrary V} We specialise the above example.
 Let $\mathcal{V} = \{v_1, \ldots, v_t\} \subset \{0, 1, \ldots, n-1\}$ be given.  Choosing $G_i = \ZZ^n \rtimes \ZZ_2$ as in Example \ref{example:duality Zn antipodal} so that $\mathcal{V}(G_i) = v_i$, let $G_0 = \ZZ^n$, let the edge groups be $\ZZ^{n-1}$, and choose injections $\ZZ^{n-1} \to \ZZ^n$ and $\ZZ^{n-1} \to \ZZ^n \rtimes \ZZ_2$ from the edge groups into the vertex groups.  Then form the graph of groups as in the previous example to get, for $G$ the fundamental group of the graph of groups,
\[ \mathcal{V}(G) = \{v_1, \ldots, v_t\} \]
\end{Example}

 Because of Remark \ref{remark:duality lemma:cohomology for graph of groups} the groups constructed in the example above will not be Bredon--Poincar\'e duality groups.

\section{The Wrong Notion of Duality}\label{section:wrong notion of duality}

This section grew out of an investigation into which groups were $\uFP$ over some ring $R$ with
\[ H^i_{\mathfrak{F}}(G, R[ -, ? ]) \cong \left\{ \begin{array}{l l} \uR & \text{if $i = n$} \\ 0 & \text{else.} \end{array} \right. \]
One might hope that this na\"ive definition would give a duality similar to Poincar\'e duality, we show this is not the case.  Namely we prove in Theorem \ref{theorem:bredon wrong duality} that the only groups satisfying this property are torsion-free, and hence torsion-free Poincar\'e duality groups over $R$.  We need a couple of technical results before we can prove the theorem.

For $M$ a Bredon module, denote by $M^D$ the dual module 
\[M^D = \Mor_{\mathfrak{F}} \left( M(-), R[ -, ?] \right) \]
Note that $M^D$ is a covariant Bredon module.  Similarly for $A$ a covariant Bredon module:
\[A^D = \Mor_{\mathfrak{F}} \left( A(-), R[?, -] \right) \]

\begin{Lemma}\label{lemma:dual of constant functor}
 If $G$ is an infinite group and $\uR$ is the covariant constant functor on $R$ then $\uR^D = 0$.
\end{Lemma}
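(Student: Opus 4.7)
The plan is to unpack what a morphism in $\uR^D(G/K) = \Mor_{\mathfrak{F}}(\uR, R[G/K, -])$ is and show it must vanish. A natural transformation $f \colon \uR \to R[G/K, -]$ of covariant Bredon modules is determined by the elements $f_H(1) \in R \otimes \ZZ[G/K, G/H]$ as $G/H$ ranges over $\OFG$, since $\uR$ sends every morphism of $\OFG$ to $\id_R$. Naturality with respect to a morphism $\alpha \colon G/H \to G/H'$ then reduces to the single equation $f_{H'}(1) = \alpha_{*}(f_H(1))$.

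My first reduction would be to the case $H = 1$: for every $H \in \mathfrak{F}$ the canonical $G$-map $\alpha \colon G/1 \to G/H$ sending $1 \mapsto H$ exists, and the naturality relation above gives $f_H(1) = \alpha_{*}(f_1(1))$. Hence it suffices to prove $f_1(1) = 0$.

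I would then split into two cases on $K$. If $K$ is non-trivial, any $G$-map $G/K \to G/1$ would send $K$ to a point of $G/1$ whose stabiliser contains $K$; but $G$ acts freely on $G/1$, so no such map exists, $\ZZ[G/K, G/1] = 0$, and $f_1(1) = 0$ automatically. If $K = 1$, then the $G$-endomorphisms of $G/1$ are precisely the right multiplications $\rho_g$ for $g \in G$, and $\ZZ[G/1, G/1]$ is identified with $\ZZ G$ via $\rho_g \mapsto g$; naturality of $f$ against each $\rho_g$ becomes the condition $f_1(1) \cdot g = f_1(1)$ in $RG$, so $f_1(1)$ must be invariant under all right translations.

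The last step is the observation that uses the infiniteness of $G$: the support of any element of $RG$ is finite, and right translation by $g$ shifts this support by $g$, so if the support of $f_1(1)$ were non-empty then starting from a single support element we would generate all of $G$ by right translations, contradicting finiteness. Hence $f_1(1) = 0$ in both cases, so $f = 0$, and $\uR^D(G/K) = 0$ for every $K \in \mathfrak{F}$. I do not anticipate any real obstacle here; the only mild subtlety is recognising that the $K = 1$ subcase carries all the content and is killed precisely by $|G| = \infty$.
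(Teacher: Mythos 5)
Your proof is correct, and it takes a more elementary route than the paper's. The paper writes $\uR \cong R \otimes_{RG} R[G/1,-]$ and then, in two lines, applies the adjoint isomorphism $\Mor_{\mathfrak{F}}(R\otimes_{RG}R[G/1,?],\,R[-,?]) \cong \Hom_{RG}(R,\Mor_{\mathfrak{F}}(R[G/1,?],R[-,?]))$ followed by the Yoneda-type lemma, to reduce everything to $\Hom_{RG}(R, R[-,G/1])$, and then observes this is zero because $G$ is infinite. Your argument unpacks by hand exactly what those two abstract steps accomplish: your reduction to $f_1(1)$ along the maps $G/1 \to G/H$ is the Yoneda step, your case $K \neq 1$ dispatches the components where $R[-,G/1]$ already vanishes, and your case $K = 1$, showing that $f_1(1) \in RG$ must be invariant under all right translations and hence zero by finiteness of support, is precisely the statement $\Hom_{RG}(R, RG) = 0$ for infinite $G$. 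The trade-off is clear: your version is self-contained and makes the role of $|G| = \infty$ completely transparent, while the paper's is shorter and reuses the adjunction and Yoneda lemma that are already set up and used elsewhere in Section \ref{section:wrong notion of duality} (e.g. in Lemma \ref{lemma:double dual is nat iso} and the proof of Theorem \ref{theorem:bredon wrong duality}).
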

\begin{proof}
Observe that $\uR = R \otimes_{RG} R[G/1,-]$, then
\begin{align*}
\uR^D &= \Mor_{\mathfrak{F}}( \uR(?), R[-, ?] ) \\
 &\cong \Mor_{\mathfrak{F}}( R \otimes_{RG} R[G/1,?], R[-,?] ) \\
 &\cong \Hom_{RG}( R, \Mor_{\mathfrak{F}} ( R[G/1,?], R[-,?] ) ) \\
 &\cong \Hom_{RG}( R, R[-,G/1] )
\end{align*}
Where the second isomorphism is the adjoint isomorphism \cite[9.21]{Lueck} and the third is Lemma \ref{lemma:yoneda-type} (the Yoneda-type Lemma).  Finally, $\Hom_{RG}( R, R[-,G/1] )$ is the zero module since $G$ is infinite.
\end{proof}

\begin{Lemma}\label{lemma:double dual is nat iso}
 The dual functor takes projectives to projectives, and the double-dual functor $-^{DD}: \{\text{Bredon modules}\} \to \{\text{Bredon modules}\}$ is a natural isomorphism when restricted to the subcategory of finitely generated projectives.
\end{Lemma}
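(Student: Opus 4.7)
The plan is to reduce everything to two applications of the Yoneda-type Lemma (Lemma \ref{lemma:yoneda-type}). For a representable Bredon module $R[-, G/K]$, the Lemma gives
\[
R[-, G/K]^D(G/H) \;=\; \Mor_{\mathfrak{F}}\big(R[-, G/K],\, R[-, G/H]\big) \;\cong\; R[G/K, G/H],
\]
naturally in $G/H$, so $R[-, G/K]^D \cong R[G/K, -]$ as covariant Bredon modules. Applying the covariant form of the Lemma dually yields $R[G/K, -]^D \cong R[-, G/K]$, and combining the two gives $R[-, G/K]^{DD} \cong R[-, G/K]$. Since $\Mor_{\mathfrak{F}}$ turns finite direct sums in its first argument into finite direct sums (products and coproducts coincide finitely in an abelian category), the functor $-^D$ sends finite direct sums of representables to finite direct sums of representables and preserves direct summands; in particular it carries finitely generated projectives to finitely generated projectives, which handles the first assertion.

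For the double dual, I would construct the natural evaluation map $\eta_M \colon M \to M^{DD}$ by sending $m \in M(G/H)$ to the natural transformation whose component at $G/L$ is
\[
\eta_M(m)(G/L)\colon \phi \;\longmapsto\; \phi(G/H)(m), \qquad \phi \in M^D(G/L) = \Mor_{\mathfrak{F}}(M, R[-, G/L]).
\]
Naturality of $\eta$ in $M$ is straightforward. The key verification is that on a representable $M = R[-, G/K]$, the map $\eta_M$ agrees with the double Yoneda isomorphism from the previous paragraph. By the Yoneda-type Lemma a morphism out of $R[-, G/K]$ is determined by its value on the universal element $\id_{G/K}$, and $\eta_{R[-, G/K]}(\id_{G/K})$ is precisely the natural transformation $\phi \mapsto \phi(G/K)(\id_{G/K})$, which is exactly the Yoneda isomorphism $R[-, G/K]^D \cong R[G/K, -]$ transported into $R[-, G/K]^{DD}$; under the second Yoneda identification it corresponds to $\id_{G/K}$, so $\eta_{R[-, G/K]}$ is an isomorphism.

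Having checked $\eta$ on each free generator, additivity of $-^{DD}$ on finite direct sums extends the conclusion to all finitely generated free Bredon modules. Finally, naturality of $\eta$ combined with additivity extends it to every finitely generated projective: if $P \oplus Q \cong F$ with $F$ finitely generated free, then $\eta_P \oplus \eta_Q = \eta_F$ is an isomorphism, forcing $\eta_P$ to be one as well.

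The main obstacle is purely bookkeeping---keeping straight which variables are contravariant and which are covariant through both halves of the double dual, and checking that the two applications of the Yoneda-type Lemma splice together with the evaluation map $\eta$ in a compatible way. All substantive content is carried by the Yoneda-type Lemma; once the identification on a single representable generator is pinned down, finite additivity and the splitting property of projectives do the rest.
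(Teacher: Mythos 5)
Your proof is correct and follows the same route as the paper: both reduce to the Yoneda-type Lemma to identify $R[-,G/K]^D \cong R[G/K,-]$, introduce the natural evaluation map $M \to M^{DD}$, check it is an isomorphism on representables, and extend to finitely generated projectives by additivity and the direct-summand argument. The only real difference is that you make explicit the finite-sum bookkeeping and the covariant-side Yoneda computation that the paper leaves implicit.
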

\begin{proof}
By the Yoneda-type Lemma \ref{lemma:yoneda-type},
\begin{align*}
        R[-,G/H]^D \cong \Mor_{\mathfrak{F}} (R[?,G/H], R[?, -]) \cong R[G/H, -]
       \end{align*}
For any module $M$, there is a natural map $\zeta: M \longrightarrow M^{DD}$, given by $\zeta (m)(f) = f(m)$.  If $M = R[-,G/H]$ then applying the Yoneda-type lemma twice shows $M^{DD} = M$.  The duality functor represents direct sums, showing the double dual of a projective is also a projective.
\end{proof}

\begin{Lemma}\label{lemma:R G/HG/K as an RWK module}
 There is an isomorphism of right $R[WK]$-modules
 \[R[G/H, -](G/K) = R[G/H, G/K] \cong \bigoplus_{\substack{g N_GK \in G/N_GK \\ g^{-1}Hg \le K}} R[WK]\]
\end{Lemma}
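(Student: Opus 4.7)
The plan is to first identify the underlying set $[G/H, G/K]$ of $G$-maps, then describe the $WK$-action coming from post-composition, and finally decompose into free $WK$-orbits.

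First I would use the well-known bijection $[G/H, G/K] \cong \{gK \in G/K : g^{-1}Hg \le K\}$, sending a $G$-map $f$ to the coset $f(H)$. The right $R[WK]$-module structure on $R[G/H,-](G/K)$ comes from pre-composition by endomorphisms of $G/K$ in $\OFG$. A $G$-map $G/K \to G/K$ is determined by $K \mapsto nK$ with $n \in N_GK$; under composition one checks $\mathrm{End}_{\OFG}(G/K) \cong (WK)^{\mathrm{op}}$, so $R[G/H,-](G/K)$ acquires a natural right $R[WK]$-module structure. Concretely, if $f \in [G/H,G/K]$ corresponds to $gK$ and $n \in N_GK$, then $f$ acted on by $nK$ corresponds to the coset $gnK$.

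Next I would analyze the orbits of the right $N_GK$-action on $\{gK : g^{-1}Hg \le K\}$ given by $gK \cdot n = gnK$. The stabiliser of $gK$ is exactly $K$, since $gnK = gK$ iff $n \in K$, so this action descends to a free right action of $WK = N_GK/K$. Two cosets $g_1K$, $g_2K$ lie in the same orbit iff $g_1 N_GK = g_2 N_GK$, and one checks that the condition $g^{-1}Hg \le K$ depends only on the coset $gN_GK$ (if $g'=gn$ with $n\in N_GK$, then $g'^{-1}Hg' = n^{-1}(g^{-1}Hg)n$, and conjugation by $n$ preserves $K$). Therefore the orbits are indexed precisely by the set $\{gN_GK \in G/N_GK : g^{-1}Hg \le K\}$, and each orbit is isomorphic to $WK$ as a right $WK$-set.

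Finally, taking free $R$-modules preserves this decomposition, yielding the claimed isomorphism of right $R[WK]$-modules
\[ R[G/H, G/K] \cong \bigoplus_{\substack{gN_GK \in G/N_GK \\ g^{-1}Hg \le K}} R[WK]. \]
The only real care needed is bookkeeping of the left/right conventions for the $WK$-action and checking that the summation index $g^{-1}Hg \le K$ is well-defined on cosets $gN_GK$; both are routine once set up correctly, so I do not expect a genuine obstacle.
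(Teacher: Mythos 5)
Your proof is correct and follows essentially the same route as the paper: identify $[G/H,G/K]$ with $\{gK : g^{-1}Hg \le K\}$, observe the right $N_GK$-action $gK \mapsto gnK$ has stabiliser exactly $K$ (hence descends to a free $WK$-action), and index the orbits by those cosets $gN_GK$ satisfying $g^{-1}Hg \le K$. One small terminological slip: the module structure comes from \emph{post}-composition $f \mapsto \alpha\circ f$ with $\alpha \in \mathrm{End}_{\OFG}(G/K)$ (you wrote ``pre-composition''), but your concrete formula $gK\cdot n = gnK$ and the identification $\mathrm{End}_{\OFG}(G/K)\cong (WK)^{\mathrm{op}}$ are exactly right, so the substance is unaffected.
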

\begin{proof}
 Firstly, $R[G/H, G/K] \cong R[(G/K)^H]$ is a free $WK$-module, since if $n \in N_GK$ such that $gnK = gK$ then $nK = K$ and hence $n \in K$.  Now, $gK$ and $g^\prime K$ lie in the same $WK$ orbit if and only if $g(WK) K = g^\prime (WK) K$, equivalently $gN_GK = g^\prime N_GK$, and $gK$ determines an element of $R[(G/K)^K]$ if and only if $g^{-1}Hg \le K$.  Thus there is one $R[WK]$ orbit for each element in the set 
\[ \{ gN_GK \in G / N_G K \: : \: g^{-1} H g \le K\}  \]
\end{proof}

\begin{Lemma}\label{lemma:length n cov res then cdR<=n}
 If there exists a length $n$ resolution of the constant covariant module $\uR$ by projective covariant Bredon modules then $G$ is $R$-torsion free and $\cd_R G \le n$.
\end{Lemma}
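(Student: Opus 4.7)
The plan is to reduce the hypothesis to ordinary group cohomology by evaluating the given resolution at the trivial orbit $G/1$. The key computation is that $R[G/K, G/1] = 0$ for every non-trivial finite subgroup $K$: a $G$-equivariant map $G/K \to G/1$ sends the coset $K$ to some $x$ with $x^{-1}Kx \le 1$, which forces $K = 1$. When $K = 1$ one recovers $R[G/1, G/1] \cong RG$, a free left $RG$-module of rank one (via the identification of endomorphisms of $G/1$ in $\OFG$ with $G$).

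Given the length-$n$ resolution $P_* \to \uR$ by projective covariant Bredon modules, each $P_i$ is a direct summand of some $\bigoplus_\alpha R[G/K_\alpha, -]$. By the observation above, $P_i(G/1)$ is a direct summand of a direct sum of copies of $RG$, hence a projective $RG$-module. Since exactness of covariant Bredon modules is defined pointwise, evaluating the resolution at $G/1$ yields a length-$n$ projective $RG$-resolution of $\uR(G/1) = R$. This proves $\cd_R G \le n$.

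For $R$-torsion-freeness, fix a finite subgroup $H \le G$. Choosing a transversal of $H$ in $G$ shows that $RG$ is free over $RH$, so each $P_i(G/1)$ remains projective when viewed as an $RH$-module. The same resolution therefore gives $\cd_R H \le n < \infty$. The conclusion now follows from the classical fact that a finite group has finite $R$-cohomological dimension if and only if its order is invertible in $R$: if some prime $p$ dividing $|H|$ were not invertible, Cauchy's theorem would produce a cyclic subgroup of order $p$ whose standard periodic $RC_p$-resolution of $R$ has non-vanishing cohomology in arbitrarily high degree, contradicting $\cd_R H < \infty$.

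The only point requiring care is that evaluation at $G/1$ must preserve projectivity (not merely freeness) of Bredon modules, which is automatic from preservation of direct summands; the rest of the argument is a straightforward assembly of the Yoneda-type identification with standard facts about cohomological dimension under restriction to subgroups.
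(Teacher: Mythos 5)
Your proof is correct and takes essentially the same approach as the paper: evaluate the projective covariant resolution at $G/1$ to obtain a finite-length projective $RG$-resolution of $R$, giving $\cd_R G\le n$, and then deduce $R$-torsion-freeness. The only cosmetic differences are that you compute $R[G/K,G/1]$ directly where the paper cites its more general Lemma \ref{lemma:R G/HG/K as an RWK module}, and you spell out the standard restriction-to-finite-subgroups argument for torsion-freeness that the paper leaves implicit.
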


\begin{proof}
Lemma \ref{lemma:R G/HG/K as an RWK module} above implies that evaluating a free covariant Bredon module at $G/1$ yields a free $RG$-module.  Thus evaluating a projective covariant Bredon module at $G/1$ yields a projective $RG$-module also.  

Let $P_* \longtwoheadrightarrow \uR$ be a length $n$ projective covariant resolution of $\uR$, evaluating at $G/1$ gives a length $n$ resolution of $R$ by projective $RG$-modules.  Thus $\cd_R G \le n$ and it follows that $G$ is $R$-torsion free.
\end{proof}

If $M$ is an $RG$-module then we denote by $IM$ the induced covariant Bredon module defined by $IM = M \otimes_{RG} R[G/1, -]$, or more explicitly:
 \[ IM : G/H \mapsto M \otimes_{RG} R[G/1, G/H] \]
The functor $I$ maps projective modules to projective modules and, by a proof analagous to \cite[9.21]{Lueck}, satisfies the following adjoint isomorphism for any covariant Bredon module $A$
\[\Mor_{\mathfrak{F}}( IM, A ) \cong \Hom_{RG}(M, A(G/1) )\]

\begin{Lemma}\label{lemma:cdR<=n then length n cov res}
 If $\cd_R G \le n$ then there exists a length $n$ projective covariant resolution of $\uR$.
\end{Lemma}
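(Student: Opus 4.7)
My plan is to induce the length $n$ projective $RG$-resolution of $R$ up to a length $n$ projective covariant Bredon resolution of $\uR$ via the induction functor $I$ introduced just before the statement.

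First I would note that the hypothesis $\cd_R G \le n < \infty$ forces $G$ to be $R$-torsion-free: for every finite subgroup $K \le G$ one has $\cd_R K \le \cd_R G < \infty$, and a finite group has finite $R$-cohomological dimension only when its order is invertible in $R$. Consequently the trivial module $R$ is projective, hence flat, as a left $RK$-module for every finite $K \le G$. This is the key technical input that will drive exactness below.

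Now let $P_\bullet \twoheadrightarrow R$ be a projective $RG$-resolution of length $n$, and apply $I$ termwise. The paper has already recorded that $I$ preserves projectives (its right adjoint, evaluation at $G/1$, is exact since Bredon exactness is pointwise), so each $IP_i$ is a projective covariant Bredon module. It remains to verify two things: $IR \cong \uR$, and the complex $IP_\bullet \to IR$ is exact. Both are pointwise statements at each $G/K$ with $K \in \mathfrak{F}$. Unwinding the definition and using the isomorphism $R[G/1, G/K] \cong RG \otimes_{RK} R$ of left $RG$-modules, associativity of the tensor product gives a natural identification
\[
(IM)(G/K) \;\cong\; M \otimes_{RK} R.
\]
In particular $(IR)(G/K) \cong R \otimes_{RK} R = R$ (since $K$ acts trivially on $R$), and a check on morphisms of $\OFG$ shows the structure maps are identities, yielding $IR \cong \uR$. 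For exactness, the flatness of $R$ as an $RK$-module established in the first step implies that $- \otimes_{RK} R$ is an exact functor, so evaluating $IP_\bullet \to IR$ at $G/K$ produces the exact sequence $P_\bullet \otimes_{RK} R \twoheadrightarrow R$, as required.

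The main conceptual obstacle--and the reason the converse direction (Lemma \ref{lemma:length n cov res then cdR<=n}) required $R$-torsion-freeness as well--is precisely that $I$ is not exact in general; its derived functors measure how much the left $RG$-modules differ from their $K$-coinvariants at each finite subgroup $K$. The hypothesis $\cd_R G \le n$ provides exactly the flatness of $R$ over each $RK$ needed to kill these obstructions simultaneously.
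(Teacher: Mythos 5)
Your proof is correct and follows essentially the same route as the paper: take a length-$n$ projective $RG$-resolution $P_\bullet$ of $R$, apply the induction functor $I$, identify $(IM)(G/K) \cong M \otimes_{RK} R$, and use that $\cd_R G < \infty$ forces $R$-torsion-freeness so that $R$ is projective (hence flat) over $RK$ for every finite $K$, which preserves exactness. Your version supplies a bit more detail than the paper's proof---in particular the explicit reason that $I$ preserves projectives via its exact right adjoint---but the argument is the same. The only quibble is in your closing remark: in Lemma \ref{lemma:length n cov res then cdR<=n}, $R$-torsion-freeness appears as part of the \emph{conclusion}, not as a hypothesis that is ``required,'' so the parallel you draw there is slightly misstated, but this does not affect the proof itself.
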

\begin{proof}
 Let $P_*$ be a length $n$ projective $RG$-module resolution of $R$, then we claim $IP_*$ is a projective covariant resolution of $\uR$.  One can easily check that $IR = \uR$ so it remains to show $IP_*$ is exact.  Evaluating at $G/H$ gives
\begin{align*}
 IP_*(G/H) ) &\cong P_* \otimes_{RG}R[G/H] \\
 &\cong P_* \otimes_{RH} R
\end{align*}
Since $\cd_R G < \infty$, $G$ is $R$-torsion free, thus $\lvert H \rvert$ is invertible in $R$ and $R$ is projective over $RH$ \cite[Proposition 4.12(a)]{Bieri-HomDimOfDiscreteGroups}.
\end{proof}

\begin{Theorem}\label{theorem:bredon wrong duality}
If $G$ is an arbitrary group $\uFP$ group with $\ucd_R G = n$ and 
\[H^i_{\mathfrak{F}}(G, R[ -, ? ]) \cong \left\{ \begin{array}{l l} \uR & \text{if $i = n$} \\ 0 & \text{else.} \end{array} \right. \]
then $G$ is torsion-free.  Note that in the above, $\uR$ denotes the constant covariant Bredon module.
\end{Theorem}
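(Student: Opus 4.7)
The plan is to reinterpret the cohomological hypothesis as furnishing a length-$n$ projective resolution of the constant covariant Bredon module $\uR$, and then invoke Lemma~\ref{lemma:length n cov res then cdR<=n}.

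Since $G$ is $\uFP$ with $\ucd_R G = n$, I would begin by fixing a length-$n$ resolution $P_\bullet \to \uR$ by finitely generated projective (contravariant) Bredon modules. Applying the contravariant duality functor $(-)^D = \Mor_{\mathfrak{F}}(-, R[-,?])$ term by term yields, by Lemma~\ref{lemma:double dual is nat iso}, a cochain complex
\[
0 \longrightarrow P_0^D \longrightarrow P_1^D \longrightarrow \cdots \longrightarrow P_n^D \longrightarrow 0
\]
of finitely generated projective covariant Bredon modules, whose $i$-th cohomology is by definition $H^i_{\mathfrak{F}}(G, R[-,?])$. The hypothesis thus provides an exact sequence
\[
0 \longrightarrow P_0^D \longrightarrow P_1^D \longrightarrow \cdots \longrightarrow P_n^D \longrightarrow \uR \longrightarrow 0,
\]
in which the augmentation $P_n^D \twoheadrightarrow \uR$ is the identification of the top cohomology. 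Re-indexing, this is a length-$n$ projective resolution of $\uR$ in the covariant Bredon category.

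Lemma~\ref{lemma:length n cov res then cdR<=n} applied to this resolution forces $\cd_R G \le n$ and $R$-torsion-freeness of $G$. To strengthen ``$R$-torsion-free'' to genuine torsion-free, I would argue by contradiction: suppose $K \le G$ is a nontrivial finite subgroup. Evaluating the covariant resolution at $G/K$ produces, by Lemma~\ref{lemma:R G/HG/K as an RWK module}, a length-$n$ projective $R[WK]$-resolution of the trivial module $R$. Pairing the \emph{constancy} of $\uR$ as a covariant Bredon module (so that all structure maps induced by $\OFG$-morphisms $G/1 \to G/K$ are identities on $R$) with Lemma~\ref{lemma:dual of constant functor} applied to the appropriate subquotient of the resolution should then force $K$ to be trivial. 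Once torsion-freeness is established, evaluation of the original cochain complex at $G/1$ gives $H^i(G,RG) = R$ for $i=n$ and $0$ otherwise, identifying $G$ as a torsion-free Poincar\'e duality group over $R$.

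The main obstacle is this final step. Building the covariant resolution is essentially formal once the double-dual isomorphism on finitely generated projectives (Lemma~\ref{lemma:double dual is nat iso}) is in hand, and $R$-torsion-freeness drops out of Lemma~\ref{lemma:length n cov res then cdR<=n}; the delicate point is that Lemma~\ref{lemma:length n cov res then cdR<=n} in isolation cannot distinguish genuine torsion-freeness from torsion of $R$-invertible order, so one must genuinely exploit the rigidity of $\uR$ as a covariant Bredon module (not merely its pointwise $R$-values at each $G/H$) to exclude every nontrivial finite subgroup.
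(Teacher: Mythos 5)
Your setup is right: form the dual resolution $P_\bullet^D$, recognize it as a length-$n$ projective covariant resolution of $\uR$, and apply Lemma~\ref{lemma:length n cov res then cdR<=n} to conclude $G$ is $R$-torsion-free with $\cd_R G \le n$. This matches the paper exactly up to that point. But the step you flag as the ``main obstacle'' --- upgrading $R$-torsion-free to honestly torsion-free --- is precisely the content of the theorem, and your proposal does not actually supply an argument for it. Evaluating the covariant resolution at $G/K$ gives a projective $R[WK]$-resolution of $R$, which only re-derives $\cd_R WK \le n$; there is no contradiction there. Saying that Lemma~\ref{lemma:dual of constant functor} ``applied to the appropriate subquotient'' should force $K=1$ is a gesture toward the right lemma but not a proof: $\uR^D = 0$ does nothing by itself unless you know that the dual of some complex must be a resolution of something nonzero.

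The missing idea is the construction of a \emph{second} covariant projective resolution of $\uR$ to play off against $P_\bullet^D$. Since $\cd_R G \le n$ and $G$ is $\FP_\infty$, one takes a length-$n$ finite type projective $RG$-resolution $Q_\bullet \to R$ and applies the induction functor $I(-) = (-) \otimes_{RG} R[G/1,-]$; Lemma~\ref{lemma:cdR<=n then length n cov res} shows $IQ_\bullet \to \uR$ is a projective covariant resolution. The comparison theorem gives a chain homotopy equivalence $P_\bullet^D \simeq IQ_\bullet$. Applying $(-)^D$ again and using the double-dual isomorphism of Lemma~\ref{lemma:double dual is nat iso} turns $(P_\bullet^D)^D$ back into $P_\bullet$, and since $\uR^D = 0$ the complex $(IQ_\bullet)^D$ must therefore be a (left-exact) complex with $H_0 = \uR$ and all higher homology zero. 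In particular there is an epimorphism $(IQ_n)^D \twoheadrightarrow \uR$. But $(IQ_n)^D \cong \Hom_{RG}(Q_n, R[?,G/1])$ vanishes when evaluated at $G/H$ for any nontrivial $H$, so this surjection is impossible unless $G$ has no nontrivial finite subgroups. Without this double-dual/comparison-theorem mechanism, the conclusion does not follow from the ingredients you have assembled.
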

\begin{proof}
Choose a length $n$ finite type projective Bredon module resolution $ P_*$ of $\uR$ then by the assumption on $H^n_{\mathfrak{F}}(G, R[ -, ? ])$, $P_*^D$ is a covariant resolution by finitely generated projectives of $\uR$:
\[0 \longrightarrow  P_0^D(-) \stackrel{\partial_1^D}{\longrightarrow} P_1^D(-) \stackrel{\partial_2^D}{\longrightarrow} \cdots \stackrel{\partial_n^D}{\longrightarrow} P_n^D(-) \longrightarrow H^n_{\mathfrak{F}}(G, R[  -, ? ]) \cong \uR(?)  \longrightarrow 0\]
By Lemma \ref{lemma:length n cov res then cdR<=n} $G$ is $R$-torsion-free and $\cd_R G \le n$.  Since $G$ is $\uFP_\infty$, $G$ is $\FP_\infty$ (Lemma \ref{lemma:OFFPn characterisation}) and we may choose a length $n$ finite type projective $RG$-resolution $Q_*$ of $R$.  Lemma \ref{lemma:cdR<=n then length n cov res} gives that $IQ_* \longtwoheadrightarrow \uR$ is a projective covariant resolution.  

By the Bredon analog of the comparison theorem \cite[2.2.6]{Weibel}, the two projective covariant resolutions of $\uR$ are chain homotopy equivalent.  Any additive functor preserves chain homotopy equivalences, so applying the dual functor to both complexes gives a chain homotopy equivalence between
\[0 \longrightarrow \uR^D \cong 0 \longrightarrow (I Q_0)^D \longrightarrow  \cdots \longrightarrow (I Q_n)^D \]
and
\[0 \longrightarrow \uR^D \cong 0 \longrightarrow P_n^{DD} \longrightarrow  P_{n-1}^{DD} \longrightarrow \cdots \longrightarrow P_0^{DD} \]
(That $\uR^D \cong 0 $ is Lemma \ref{lemma:dual of constant functor}).  Since $\Mor_{\mathfrak{F}}$ is left exact we know both complexes above are left exact.  Lemma \ref{lemma:double dual is nat iso} gives the commutative diagram below.
\[
\xymatrix{
 0 \ar[r] & P_n^{DD} \ar[r] \ar^\cong[d] &  \cdots \ar[r] & P_1^{DD} \ar[r] \ar^\cong[d] &  P_0^{DD}  \ar^\cong[d]\\
 0 \ar[r] & P_n \ar[r] &  \cdots \ar[r]  & P_1 \ar[r] & P_0
}\]

The lower complex, $P_*$, satisfies $H_0P_* \cong \uR$ and $H_iP_* = 0$ for all $i \neq 0$.  Thus the same is true for the top complex, and also the complex $I Q_*^D$, since this is homotopy equivalent to it.  In particular, there is an epimorphism of Bredon modules, 
\[I Q_n^D \longtwoheadrightarrow \uR\]
The left hand side simplifies, using the adjoint isomorphism
\[I Q_n^D = \Mor_{\mathfrak{F}} \left( I Q_n , R[?, -] \right) \cong \Hom_{RG}(Q_n, R[?, G/1]) \]
Since $\Hom_{RG}(Q_n, R[?, G/1])(G/H) = 0$ if $H \neq 1$, this module cannot surject onto $\uR$ unless $G$ is torsion-free. 
\end{proof}

\providecommand{\bysame}{\leavevmode\hbox to3em{\hrulefill}\thinspace}
\providecommand{\MR}{\relax\ifhmode\unskip\space\fi MR }
\providecommand{\MRhref}[2]{%
  \href{http://www.ams.org/mathscinet-getitem?mr=#1}{#2}
}
\providecommand{\href}[2]{#2}

\end{document}